\def\NAT@spacechar{~}
\crefname{figure}{Figure}{Figures}
\Crefname{figure}{Figure}{Figures}
\newtheorem{definition}{Definition}[section]
\newtheorem{proposition}[definition]{Proposition}
\newtheorem{theorem}[definition]{Theorem}
\newtheorem{corollary}[definition]{Corollary}
\newtheorem{lemma}[definition]{Lemma}
\newtheorem{conjecture}[definition]{Conjecture}
\numberwithin{equation}{section}
\newcommand{\comment}[1]{}
\renewcommand{\epsilon}{\varepsilon}
\newcommand{\eps}{\varepsilon}
\newcommand{\COMMENT}[1]{}
\title{Dirac's theorem for random regular graphs}
\address{School of Mathematics, University of Birmingham, 
Edgbaston, Birmingham, B15 2TT, United Kingdom.}
\author[P.~Condon]{Padraig Condon}
\email{pxc644@bham.ac.uk} 
\author[A.~Espuny D\'iaz]{Alberto Espuny D\'iaz}
\email{axe673@bham.ac.uk}
\author[A.~Gir\~{a}o]{Ant\'onio Gir\~{a}o}
\email{giraoa@bham.ac.uk}
\author[D.~K\"uhn]{Daniela K\"uhn}
\email{d.kuhn@bham.ac.uk}
\author[D.~Osthus]{Deryk Osthus}
\email{d.osthus@bham.ac.uk}
\thanks{This project has received partial funding from the European Research 
Council (ERC) under the European Union's Horizon 2020 research and innovation programme (grant agreement no. 786198, D.~K\"uhn and D.~Osthus).
The research leading to these results was also partially supported by the EPSRC, grant nos. EP/N019504/1 (A.~Gir\~ao and D.~K\"uhn) and EP/S00100X/1  (D.~Osthus), as well as the Royal Society and the Wolfson Foundation (D.~K\"uhn).}
\date{\today}
\begin{document}

\begin{abstract}
We prove a `resilience' version of Dirac's theorem in the setting of random regular graphs. 
More precisely, we show that, whenever $d$ is sufficiently large compared to $\eps>0$, a.a.s. the following holds: let $G'$ be any subgraph of the random $n$-vertex $d$-regular graph $G_{n,d}$ with minimum degree at least $(1/2+\eps)d$.
Then $G'$ is Hamiltonian.

This proves a conjecture of Ben-Shimon, Krivelevich and Sudakov.
Our result is best possible: firstly, the condition that $d$ is large cannot be omitted, and secondly, the minimum degree bound cannot be improved.
\end{abstract}
\maketitle
\thispagestyle{empty}

\section{Introduction}

The study of Hamiltonicity has been at the core of graph theory for the past few decades.
A graph $G$ is said to be \emph{Hamiltonian} if it contains a cycle which covers all of the vertices of $G$, and this is called a \emph{Hamilton cycle}.
It is well-known that the problem of determining whether a graph is Hamiltonian is NP-complete, and thus most results about Hamiltonicity deal with sufficient conditions which guarantee this property.
One of the most well-known examples is due to Dirac, who proved that any graph $G$ on $n\geq3$ vertices with minimum degree at least $n/2$ is Hamiltonian.

\subsection{Hamilton cycles in random graphs}

The search for Hamilton cycles in various models of random graphs has also been a driving force in the development of this theory.
The classical binomial model $G_{n,p}$, in which each possible edge is added to an $n$-vertex graph with probability $p$ independently of the other edges, has seen many results in this direction.
In particular, \citet{KS83} showed that $p=\log n/n$ is the `sharp' threshold for the existence of a Hamilton cycle.
This can be strengthened to obtain the following hitting time result.
Consider a random graph process as follows: given a set of $n$ vertices, add each of the $\binom{n}{2}$ possible edges, one by one, by choosing the next edge uniformly at random among those that have not been added yet.
In this setting, \citet{AKS85} and \citet{Bol84} independently proved that a.a.s.~the resulting graph becomes Hamiltonian as soon as its minimum degree is at least $2$.

The search for Hamilton cycles in other random graph models has proven more difficult.
In this paper we will deal with random \emph{regular} graphs: given $n,d\in\mathbb{N}$ such that $d<n$ and $nd$ is even, $G_{n,d}$ is chosen uniformly at random from the set of all $d$-regular graphs on $n$ vertices.
The study of this model is often more challenging than that of $G_{n,p}$ due to the fact that the presence and absence of edges in $G_{n,d}$ are correlated.
Several different techniques have been developed to deal with this model, such as the configuration model (see \cref{section:config}) or edge-switching techniques.
\citet{RW92} proved that $G_{n,3}$ is a.a.s.~Hamiltonian, and later extended this result to $G_{n,d}$ for any fixed $d\geq3$~\cite{RW94}.
This is in contrast to $G_{n,p}$, where the average degree must be logarithmic in $n$ to ensure Hamiltonicity.
These results were later generalised by \citet{CFR02} and \citet{KSVW01} for the case when $d$ is allowed to grow with $n$, up to $d\leq n-1$.
Many further results can be found in the recent survey of \citet{Frieze19}.

\subsection{Local resilience}

More recently, several extremal results have been translated to random graphs via the concept of local resilience.
The \emph{local resilience} of a graph $G$ with respect to some property $\mathcal{P}$ is the maximum number $r\in\mathbb{N}$ such that, for all $H\subseteq G$ with $\Delta(H)<r$, the graph $G\setminus H$ satisfies $\mathcal{P}$.
We say that $G$ is \emph{$r$-resilient} with respect to a property  $\mathcal{P}$ if the local resilience of $G$ is greater than $r$.
The systematic study of local resilience was initiated by \citet{SV08}, and the subject has seen a lot of research since.

Note that Dirac's theorem can be restated in this terminology to say that the local resilience of the complete graph $K_n$ with respect to Hamiltonicity is $\lfloor n/2\rfloor$.
This concept of local resilience then naturally suggests a generalisation of Dirac's theorem to random graphs.
In the binomial model, \citet{LS12} showed that, for any constant $\eps>0$, if $p\geq C\log n/n$ and $C$ is sufficiently large, then a.a.s.~$G_{n,p}$ is $(1/2-\eps)np$-resilient with respect to Hamiltonicity.
This improved on earlier bounds \cite{BKS11b,BKS11a,FK08,SV08}.
Very recently, \citet{Mont17} and independently \citet{NST17},  proved a hitting time result for the local resilience of $G_{n,p}$ with respect to Hamiltonicity.
In a different direction, \citet{CEKKO} considered `resilient' versions of P\'osa's theorem and Chv\'atal's theorem for $G_{n,p}$.

The resilience of random regular graphs with respect to Hamiltonicity is less understood.
\citet{BKS11b} proved that, for large (but constant) $d$, a.a.s.~$G_{n,d}$ is $(1-\eps)d/6$-resilient with respect to Hamiltonicity.
They conjectured that the true value should be closer to $d/2$.

\begin{conjecture}[\citet{BKS11b}]\label{conj: resilience}
For every $\eps>0$ there exists an integer $D=D(\eps)>0$ such that, for every fixed integer $d>D$, the local resilience of $G_{n,d}$ with respect to Hamiltonicity a.a.s.~lies in the interval $((1/2-\eps)d,(1/2+\eps)d)$. 
\end{conjecture}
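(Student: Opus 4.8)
\smallskip
\noindent\emph{Proof sketch.}
I would prove the two inequalities of \cref{conj: resilience} separately; the lower one is the heart of the matter. For the \emph{upper} bound it suffices to exhibit, a.a.s., a subgraph $H\sub G_{n,d}$ with $\Delta(H)<(1/2+\eps)d$ whose removal destroys Hamiltonicity, since then the local resilience is at most $\Delta(H)<(1/2+\eps)d$. Working in the configuration model (\cref{section:config}), a routine first-moment computation shows that a.a.s.\ $G_{n,d}$ has a set $A$ with $|A|\ge\lfloor n/2\rfloor+1$ in which every vertex has at most $(1/2+\eps/2)d$ neighbours: take a uniformly random set of size slightly above $n/2$, note via a Chernoff bound that only an $\exp(-\Omega(\eps^2 d))$-proportion of its vertices have more than $(1/2+\eps/2)d$ internal neighbours, and discard those (the surplus over $n/2$ absorbs the discards once $d$ is large). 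Letting $H$ be the set of all edges of $G_{n,d}$ inside $A$ gives $\Delta(H)\le(1/2+\eps/2)d<(1/2+\eps)d$, while $G_{n,d}\sm H$ contains the independent set $A$ of size greater than $n/2$ and is therefore not Hamiltonian.

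For the \emph{lower} bound I would prove the statement of the abstract — a.a.s.\ every spanning subgraph $G'\sub G_{n,d}$ with $\delta(G')\ge(1/2+\eps)d$ is Hamiltonian — which, applied with $\eps/2$ in place of $\eps$, shows the local resilience a.a.s.\ exceeds $(1/2-\eps)d$. The first step is to replace randomness by a deterministic hypothesis: in the configuration model one checks that a.a.s.\ $\Gamma:=G_{n,d}$ enjoys (P1) a spectral bound $\lambda(\Gamma)\le C\sqrt d$ for an absolute constant $C$ (Broder--Shamir/Friedman), hence full control on all edge densities via the expander mixing lemma; and (P2) local sparsity — every set of at most $\eps_0 n$ vertices spans only a $(1+o(1))$-factor more vertices, and all but $o(n)$ vertices are ``locally tree-like'' (have a genuine tree up to some fixed radius $r$ around them). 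Conditioning on simplicity costs only a constant factor, so these hold a.a.s.\ in $G_{n,d}$. Now fix any $G'\sub\Gamma$ with $\delta(G')\ge(1/2+\eps)d$. Writing $\sum_{v\in S}\deg_{G'}(v)=2e_{G'}(S)+e_{G'}(S,V\sm S)$ and bounding $e_{G'}(S)\le e_\Gamma(S)$ by (P1), the minimum-degree hypothesis yields, for every $S$ with $|S|\le n/2$, the edge-expansion $e_{G'}(S,V\sm S)\ge(\eps/4)d|S|$; in particular $G'$ has no sparse cut and is connected. Feeding this back into (P1) to control how concentrated those edges can be, one gets vertex-expansion $|N_{G'}(S)\sm S|\ge 2|S|+1$ for all $|S|\le\eps_1 n$ (with $\eps_1=\eps_1(\eps,d)$), and $|N_{G'}(S)\sm S|\ge(1+\eps/4)|S|$ for $\eps_1 n\le|S|\le n/2$. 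Together these give a \emph{connecting lemma}: any two vertices of $G'$ can be joined, avoiding any prescribed small set of vertices, by a path of length $O(\log n)$ (grow breadth-first balls from both ends to linear size using the two expansion bounds; they must then meet). Finally, by (P2), for all but $o(n)$ vertices $v$ the part of the radius-$r$ ball around $v$ that survives in $G'$ still contains a fixed bounded-degree ``absorbing template'', since $v$ retains at least a $(1/2+\eps)$-proportion of the edges in each layer.

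With these tools the Hamilton cycle is assembled by the absorbing method. First build a short absorbing path $P^*$ with $|V(P^*)|=o(n)$ such that for every $W\sub V\sm V(P^*)$ with $|W|\le\eps_3 n$ there is a path on $V(P^*)\cup W$ with the same endpoints as $P^*$; this uses $\Theta(n)$ vertex-disjoint absorbing templates, chosen so each vertex lies in many of them, strung together via the connecting lemma. Next cover all but $\le\eps_3 n$ of the remaining vertices by a path $Q$ disjoint from $P^*$, by iterating P\'osa's rotation--extension technique inside $G'$ — rotating via the vertex-expansion bounds and splicing partial paths via the connecting lemma — until only a small leftover set survives. Finally close $Q$ and $P^*$ into one cycle on $V(P^*)\cup V(Q)$ with the connecting lemma and absorb the $\le\eps_3 n$ leftover vertices into $P^*$, producing a Hamilton cycle. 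The crux — and the reason this is far harder than the analogous statement for $G_{n,p}$ with $p\gg\log n/n$ — is that $d$ is only a (large) constant, so $\Gamma$ is genuinely sparse and locally tree-like and $G'$ is adversarial: the convenient pseudorandomness assertion ``there is an edge between any two disjoint linear-sized sets'' is simply \emph{false} here, so neither the near-spanning path nor the absorbing structure can be lifted off the shelf. Forcing the rotation--extension process to leave only $o(n)$ vertices uncovered, and guaranteeing that enough absorbing templates survive \emph{every} admissible $G'$, are the steps where the real work lies; both hinge on playing the constant local edge-expansion of $G'$ (from $\delta(G')\ge(1/2+\eps)d$ and the forest-like structure of $\Gamma$) against the macroscopic control of (P1).
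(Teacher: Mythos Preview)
Your upper-bound argument is fine and close in spirit to the paper's maximum-cut construction. The lower bound, however, has a genuine gap at the absorbing step. A bounded-size absorber for a vertex $v$ is a set $A$ with designated endpoints $x,y$ such that both $G'[A]$ and $G'[A\cup\{v\}]$ contain Hamilton $x$--$y$ paths $P$ and $P'$; the symmetric difference $E(P)\mathbin{\triangle} E(P')$ then decomposes into cycles of $G'$, one of which passes through $v$ and has length at most $|A|+1$. But for fixed $d$ the graph $G_{n,d}$ is locally tree-like in the strong sense that for every constant $L$ the number of cycles of length at most $L$ is a.a.s.\ $O_{d,L}(1)$; thus all but $O(1)$ vertices lie on no short cycle in $G$, hence on none in $G'$. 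Consequently a.a.s.\ almost no vertex admits a bounded-size absorber at all, and your claim that ``for all but $o(n)$ vertices $v$ the radius-$r$ ball around $v$ that survives in $G'$ contains an absorbing template'' is false: retaining a $(1/2+\eps)$-fraction of each BFS layer around $v$ only produces a subtree, which is useless for absorption. This obstruction --- the essential absence of local cycles --- is precisely what makes the constant-$d$ case hard.

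The paper avoids absorption entirely. It extracts a sparse $3$-expander $R\subseteq G'$ with $\Delta(R)\le\delta d$ and then runs P\'osa rotation--extension inside $R$, but with \emph{booster pairs} $\{uv,e\}$ (in the spirit of Montgomery) rather than single boosters: with $e$ drawn from a second sparse auxiliary graph $F\subseteq G'\setminus R$, there are so many such pairs that a union bound over all sparse $(R,F)$ forces some primary edge $uv$ to lie in $G'$ (\cref{lem: booster,cor: boost,lem: rand}). Iterating at most $n$ times makes $R$ Hamiltonian. Your rotation--extension step towards a near-spanning path faces the same underlying obstacle --- with $d$ constant and the P\'osa endpoint sets only a small constant fraction of $n$, the adversary can delete every single-edge booster for a given longest path --- and it is exactly this obstacle that the booster-pair machinery is designed to overcome.
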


They also suggested to study the same problem when $d$ is allowed to grow with $n$.
In this direction, \citet{SV08} showed that, for any fixed $\eps>0$, and for any $(n,d,\lambda)$-graph  $G$ (that is, a $d$-regular graph on $n$ vertices whose second largest eigenvalue in absolute value is at most $\lambda$) with $d/\lambda>\log^2n$, we have that $G$ is $(1/2-\eps)d$-resilient with respect to Hamiltonicity.
This, together with a result of \citet{KSVW01} and recent results of \citet{CGJ18}\COMMENT{Improving on previous results of \citet{BFSU99}.} and \citet{TY19} about the spectral gap of random regular graphs, implies that, for $\log^4n\ll d\leq n-1$, a.a.s.~$G_{n,d}$ is $(1/2-\eps)d$-resilient with respect to Hamiltonicity\COMMENT{The proof has to be broken into several chunks.
In order to cover the range $d=o(n^{2/3})$ we use the result of \citet{CGJ18}.
In order to cover the range $d=\Omega(n^{2/3})$ with $d\leq n/2$ we use the result of \citet{TY19}.
(Alternatively, we could use \cite{BFSU99} and \cite{TY19}, respectively, changing $n^{2/3}$ by $n^{1/2}$.)
Finally, the range for $n/2\leq d\leq (1-o(1/\log n))n$ can be covered by the result of \citet{KSVW01}.
(This result may seem to give a much larger range, but when applied in this setting actually leaves a gap when $d$ gets very close to linear, which forces us to use \cite{TY19}.)
Finally, to cover the remaining case when $d$ is allowed to be as large as $n-1$, the result follows by applying Dirac's theorem.}.
One can extend this to $d\gg\log n$ by combining a result of \citet{KV04} on joint distributions of binomial random graphs and random regular graphs with the result of \citet{LS12} about the resilience of $G_{n,p}$ with respect to Hamiltonicity. 

The study of local resilience has not been restricted to Hamiltonicity.
Other properties that have been considered include the containment of perfect matchings~\cite{CEKKO,NST17}, directed Hamilton cycles~\cite{FNNPS17,HSS16,Mont19}, cycles of all possible lengths~\cite{KLS10}, $k$-th powers of cycles~\cite{NST2}, bounded degree trees~\cite{BCS11}, triangle factors~\cite{BLS}, and bounded degree graphs~\cite{ABET,HLS12}.

\subsection{New results}

In this paper, we completely resolve \cref{conj: resilience}, as well as its extension to $d$ growing slowly with $n$ (recall that the case when $d \gg \log n$ is covered by earlier results).
This can be seen as a version of Dirac's theorem for random regular graphs.
Our main result gives the lower bound in \cref{conj: resilience}.

\begin{theorem}\label{thm: main}
For every $\eps>0$ there exists $D$ such that, for every $D<d\leq\log^2n$, the random graph $G_{n,d}$ is a.a.s.~$(1/2 - \eps)d$-resilient with respect to Hamiltonicity. 
\end{theorem}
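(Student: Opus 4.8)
The plan is to work with the configuration model: let $G^{\ast}=G^{\ast}_{n,d}$ be obtained by choosing a uniformly random perfect matching on $nd$ half-edges, $d$ of them at each vertex. For $d\le\log^{2}n$ the probability that $G^{\ast}$ is simple is $e^{-O(d^{2})}\ge e^{-O(\log^{4}n)}$, and conditionally on this event $G^{\ast}$ is distributed as $G_{n,d}$; hence it suffices to prove that, with probability $1-e^{-\omega(d^{2})}$, the graph $G^{\ast}$ has the property that $G^{\ast}-H$ is Hamiltonian for every $H$ with $\Delta(H)<(1/2-\eps)d$ (equivalently, every spanning $G'\subseteq G^{\ast}$ with $\delta(G')\ge(1/2+\eps)d$ is Hamiltonian). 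I would exploit throughout that the random matching may be revealed in several rounds, and that edge counts between and within vertex sets of $G^{\ast}$ are concentrated with exponentially small tails. Keeping all failure probabilities below $e^{-\omega(d^{2})}$ is, ultimately, what forces the restriction $d\le\log^{2}n$.

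The first ingredient is the \emph{resilient pseudorandomness} of $G_{n,d}$ on small sets: with the required probability, for every $S$ with $|S|$ up to some $n/\operatorname{poly}(d)$, $S$ spans at most $|S|$ edges and all but at most $\eps d|S|/100$ vertices of $V\setminus S$ have at most two neighbours in $S$. Together with $\delta(G^{\ast}-H)\ge(1/2+\eps)d$ this yields that $G':=G^{\ast}-H$ \emph{expands on small sets}: after discarding the few high-codegree vertices, a small set $S$ still sends at least $(1/2+\eps/2)d|S|$ edges to vertices with at most two neighbours in $S$, so $|N_{G'}(S)|\ge 2|S|$ once $d\ge D$; one also checks $G'$ is connected. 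Crucially, one cannot hope for expansion of \emph{linear-sized} $S$: for any $\gamma>0$, $G_{n,d}$ a.a.s.\ contains, inside every set of $\gamma n$ vertices, a vertex all of whose $d$ neighbours lie in that set, so the maximum codegree into a linear set is as large as $d$ and the edge count only gives $|N_{G'}(S)|$ of order $|S|/6$. This is exactly why the Lee--Sudakov-type argument for $G_{n,p}$ does not transfer, and why this sparse regime is the hard one.

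Consequently the rotation--extension method only produces paths of length $\Theta(n/\operatorname{poly}(d))$ for free, and the bulk of the proof is to bridge the remaining gap and close up into a Hamilton cycle \emph{inside} the sparse graph $G'$. I would do this by exposing the matching of $G^{\ast}$ in a ``skeleton'' round which yields the resilient small-set expansion above, followed by further rounds of small fresh random matchings used to supply the $\Omega(n)$ boosters needed to grow and finally close the cycle; equivalently, one builds from part of $G^{\ast}$ a robust absorbing structure capable of swallowing any small leftover set, and uses the expansion to route an almost-spanning path. The genuine obstacle is that $H$ is chosen adversarially \emph{after} all of $G^{\ast}$ is revealed, so the later rounds are not independent of the adversary. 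I would handle this by reserving at each vertex a proportion of its half-edges that exceeds $(1/2-\eps)d$, so that, whatever $H$ is, a positive fraction of the reserved matching survives at every vertex of $G^{\ast}-H$, while the remaining (slightly-more-than-half) of $G^{\ast}$ still retains enough of the degree condition for the skeleton to expand; one then union-bounds over the relevant restrictions of $H$, which is affordable only because each booster/absorption step fails with probability $e^{-\omega(d^{2})}$.

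Calibrating the reservoir so that it is simultaneously large enough to defeat the adversary and small enough to leave the skeleton expanding, organising the rounds so that only boundedly many ``global'' operations are required, and verifying that every probabilistic estimate beats both the union bound over $H$ and the $e^{-O(d^{2})}$ cost of conditioning on simplicity, is the delicate heart of the argument; I expect that step, rather than the expansion lemmas or the rotation arguments, to be where essentially all of the difficulty lies.
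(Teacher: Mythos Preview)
Your proposal correctly identifies the crux of the problem --- that the adversary $H$ is chosen after all of $G$ is revealed, so no naive multi-round exposure can give you independence --- but it does not resolve it. You end by saying that calibrating the reservoir and verifying that the union bound over $H$ beats the $e^{-O(d^2)}$ simplicity cost is ``the delicate heart of the argument'' and is where ``essentially all of the difficulty lies''; that is precisely the step you have not carried out. Concretely, a union bound over restrictions of $H$ costs $2^{\Theta(nd)}$, while each of your booster/absorption events fails with probability you only promise to be $e^{-\omega(d^2)}$; these do not match. And the reservation idea faces a genuine tension: if you reserve more than $(1/2-\eps)d$ half-edges per vertex so that something always survives $H$, then the skeleton keeps fewer than $(1/2+\eps)d$ half-edges per vertex and the adversary can kill its minimum degree entirely. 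You have not explained how to escape this.

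The paper's route is quite different and sidesteps this difficulty altogether. It never exposes $G$ in rounds, never reserves half-edges, never uses absorption, and never union-bounds over $H$. Instead it first ``thins'' $G'$ to a sparse spanning $3$-expander $R$ with $\Delta(R)\le \delta d$ for $\delta\ll\eps$, and then proves a \emph{deterministic} property of $G$: for \emph{every} sparse $3$-expander $R\subseteq G$ and \emph{every} $H\in\mathcal{H}_{1/2-\eps}(G)$, there is a booster for $R$ inside $G\setminus H$. The union bound is taken over sparse graphs $R$ (and an auxiliary sparse $F$), of which there are only $e^{O(\delta dn\log(1/\delta))}$, and the per-$R$ failure probability is $e^{-\Theta(\eps^4 dn)}$; since $\delta\ll\eps$ this wins. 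For a fixed $R$ the statement is shown to hold for all $H$ simultaneously by a pigeonhole/averaging argument (some vertex has more than $d/2$ potential booster-partners, so one survives any $H$). A second key ingredient you are missing is Montgomery's \emph{booster pairs} $\{e,e'\}$ rather than single booster edges: this enlarges the booster supply enough to make the per-$R$ failure probability as small as required. Iterating at most $n$ times, keeping $R$ sparse throughout, yields the Hamilton cycle.
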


While we do not try to optimise the dependency of $D$ on $\eps$, we remark that $D$ in \cref{thm: main} can be taken to be polynomial in $\eps^{-1}$.\COMMENT{$O(\eps^{-30})$.} \COMMENT{Antonio: I think that perhaps with a bit of effort, but not too much, we could show $O(\eps^{-2})$ is enough as a lower bound for $D$.} 
This is essentially best possible in the sense that \cref{thm: main} fails if  $d \le (2\eps)^{-1}$. 

\begin{theorem}\label{thm: counter}
For any odd $d > 2$, the random graph $G_{n,d}$ is not a.a.s.~$(d-1)/2$-resilient with respect to Hamiltonicity.  
\end{theorem}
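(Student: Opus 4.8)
The strategy is to exhibit, with probability bounded away from $0$ as $n\to\infty$ (which is enough to refute a.a.s.\ $(d-1)/2$-resilience), a set $H\subseteq G_{n,d}$ of edges with $\Delta(H)\le(d-1)/2$ for which $G_{n,d}\setminus H$ is disconnected, hence not Hamiltonian. The device is a vertex partition $V=A\sqcup B$ into two non-empty parts with $\delta(G_{n,d}[A]),\delta(G_{n,d}[B])\ge(d+1)/2$; call such a partition \emph{internal}. Given one, let $H$ consist of all $A$--$B$ edges: each $v\in A$ (resp.\ $B$) has at least $(d+1)/2$ neighbours in its own part and hence at most $d-(d+1)/2=(d-1)/2$ in $H$, so $\Delta(H)\le(d-1)/2$; and $G_{n,d}\setminus H=G_{n,d}[A]\sqcup G_{n,d}[B]$ is disconnected. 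Thus $G_{n,d}$ has local resilience at most $(d-1)/2$, i.e.\ is not $(d-1)/2$-resilient, so the whole statement reduces to: $G_{n,d}$ (with $d$ odd, $d>2$) contains an internal partition with probability not tending to $0$.

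When $d=3$ this holds a.a.s.\ and is easy, since $(d+1)/2=2$: any induced cycle $C$ whose vertices have pairwise distinct neighbours outside $C$ yields an internal partition with $A=V(C)$. A.a.s.\ $G_{n,3}$ has such a cycle of length in $[5,\sqrt{\log n}]$: the short-cycle counts of $G_{n,d}$ are asymptotically independent Poisson, so a.a.s.\ some cycle of length in this range exists, while cycles of such length that have a chord, or two vertices with a common neighbour outside the cycle, have expected number $o(1)$. (Even a triangle, which appears with probability bounded away from $0$, already suffices here.)

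For odd $d\ge5$ no $o(n)$-size gadget can work — a first-moment bound shows a.a.s.\ $G_{n,d}$ has no set of $o(n)$ vertices inducing minimum degree $\ge3$ — so both parts of the sought partition are linear in $n$, and I would estimate, in the configuration model, the expected number $\Exp Z$ of (near-)balanced internal partitions. There are $\binom{n}{n/2}=e^{(\ln 2)n+o(n)}$ balanced partitions, and the probability that a fixed one is internal is a large-deviation probability: the number of cross-edges must fall from its typical value $\approx dn/4$ to at most $(d-1)n/4$ \emph{and} be distributed so that no vertex sees more than $(d-1)/2$ of them. A tilted (grand-canonical) computation — whose single-vertex factor is built from $\sum_{j\le(d-1)/2}\binom dj e^{-\lambda j}$, optimised over the tilt $\lambda$ and matched to the cross-edge constraint — should give $\Exp Z=e^{\Theta(n)}$ with positive rate (for large $d$ the required deviation is only an $\approx 1/d$ fraction of the typical value, hence its rate is well below $\ln 2$; the finitely many remaining small $d$ are checked directly). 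One then carries out a second-moment estimate, parametrising pairs of internal partitions by $|A_1\cap A_2|$ and showing that near-orthogonal overlaps dominate, to obtain $\Exp[Z^2]=O((\Exp Z)^2)$ and hence $\Pro[Z\ge1]\ge(\Exp Z)^2/\Exp[Z^2]$ bounded away from $0$; alternatively, since this is a first-moment-positive property of the configuration model, small-subgraph conditioning promotes it to an a.a.s.\ statement.

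The hard part will be this last step: computing the first-moment exponential rate honestly (a genuine large-deviation calculation in the configuration model rather than the naive product heuristic, because the per-vertex ``happiness'' events are correlated and coupled through the total cross-edge count), and then controlling $\Exp[Z^2]$, i.e.\ showing that two random internal partitions are essentially uncorrelated. The reduction to internal partitions and the $d=3$ case are routine by comparison.
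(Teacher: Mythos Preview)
Your reduction is correct and is essentially dual to the paper's: where you seek an \emph{internal} partition (each vertex has at least $(d+1)/2$ neighbours in its own part) and delete the cross edges to disconnect the graph, the paper works with a maximum cut (each vertex automatically has at least $(d+1)/2$ neighbours \emph{across}), deletes the within-part edges, and obtains an unbalanced bipartite graph whenever the cut happens to be unbalanced. Your $d=3$ argument via short induced cycles with distinct outside neighbours is fine and is a pleasant shortcut the paper does not take.

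The gap is for odd $d\ge5$. You correctly identify that everything reduces to showing internal partitions exist with probability bounded away from zero, but you only outline a first/second-moment (or small-subgraph-conditioning) programme without carrying it out. The exponential-rate first-moment computation and, especially, the control of $\mathbb{E}[Z^2]$ over pairs of partitions indexed by their overlap are substantial calculations; ``should give $\mathbb{E}Z=e^{\Theta(n)}$ with positive rate'' is a hope, not a proof, and the per-vertex ``happiness'' events you mention are indeed correlated in a way that makes both moments delicate. The paper sidesteps all of this with a switching argument: starting from a balanced maximum cut (which already has the degree property for free), it picks a vertex $x\in A$ of some fixed cross-degree $D$ and switches all $D$ cross edges at $x$ against independent edges inside $A$, producing a graph with an unbalanced cut that still satisfies the degree condition. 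Double-counting these switchings between the ``balanced'' class $\Omega$ and the ``unbalanced'' class $\Omega'$ shows $|\Omega|\le C_d\,|\Omega'|$ for a constant $C_d$, hence a positive fraction of $d$-regular graphs are not $(d-1)/2$-resilient. The only input needed is the crude fact (\cref{prop: edgest}) that every $n/2$-set spans $\Omega(n)$ edges, to guarantee enough edges in $A$ to switch against; no moment computation is required at all.
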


Our proof also shows that $G_{n,d}$ is not a.a.s.~$(d- 1)/2$-resilient with respect to the containment of a perfect matching.
Moreover, one can adapt the proof of \cref{thm: counter} to show that, for every even $d$, the random graph $G_{n,d}$ is not a.a.s.~$d/2$-resilient with respect to Hamiltonicity (or the containment of a perfect matching).
It would also be interesting to obtain bounds on the resilience for small $d$.
In particular, here are some questions:
\begin{enumerate}[label=\roman*)]
    \item Given any fixed even $d$, determine whether the graph $G_{n,d}$ is a.a.s.~$(d/2-1)$-resilient with respect to Hamiltonicity.
    \item What is the likely resilience of $G_{n,4}$ with respect to Hamiltonicity or the containment of perfect matchings?
    Is a graph obtained from $G_{n,4}$ by removing any matching a.a.s.~Hamiltonian?
\end{enumerate}

Finally, we observe (as is well known) that the upper bound of $(1/2 + \eps)d$ in \cref{conj: resilience} follows easily from edge distribution properties of random regular graphs. 
Indeed, we note that for every $\epsilon>0$ there exists a constant $D$ such that for every $D \leq d\leq \log^2n$, a.a.s.\COMMENT{In fact, for every $\eps>0$, there is a constant $D$ such that for any $d\geq D$, every $d$-regular graph $G$ is not $(1/2+\eps)$-resilient with respect to being connected. To see this, consider a uniformly at random partition of $V(G)\coloneqq A \cup B$. Let us denote by $E_v$ the event the vertex $v$ has more than $(1/2+\eps)d$ neighbours in the opposite part. By Chernoff bounds, we know the probability of $E_v$ happening is at most $e^{O(-\eps^2d)}$. Since the event $E_v$ is independent of the system $\{E_{v'}: d(v,v')\geq 3\}$ we may apply Lovasz Local Lemma to deduce there exists a way to partition $V(G)$ where no vertex sends more than $(1/2+\eps)d$ edges to the opposite part.}~the graph $G=G_{n,d}$ has the property that between any two disjoint sets $A,B$ of size $\lfloor n/2 \rfloor$ and $\lceil n/2 \rceil$, respectively, the number of edges in $G[A,B]$ is a.a.s.~bounded from above by $(1/2+\epsilon/2)nd/2$ (see \cref{prop: edges}).\COMMENT{We apply \cref{prop: edges} with $R$ as the empty graph, $A$ as $A$ and for each $a \in A$ we let $Z_a$ be all edges from $a$ to $B$.
Note that $z = (n/2)(n/2) = n^2/4 > \eps n^2$.
It follows now that with probability at least $1- e^{-\eps^3 nd}$ we have that $e_G(A,B) < (1+\eps)zd/n  = (1/2+ \eps/2)nd/2$.
The result then follows by a union bound.}
Now let $A,B$ be a maximum cut in $G$.
Thus, $e_G(a,B)\geq d/2$ for all $a\in A$, and similarly for all $b\in B$.
If $|A| \neq |B|$, then by deleting the edges in $G[A]\cup G[B]$\COMMENT{Technically we use + for graphs, but only on the same vertex set. Maybe we should change this back to the union of the two sets of edges here?}, the remaining graph is not Hamiltonian since it forms an unbalanced bipartite graph.
If $|A|=|B|$, then by the above property, there must exist a vertex $x \in A$ such that $e_G(x,B)\leq (1/2+\eps/2)d$. 
Let $A'\coloneqq A\setminus \{x\}$ and $B'\coloneqq B\cup \{x\}$. 
As before, by deleting the edges in $G[A']\cup G[B']$, we obtain a graph which is not Hamiltonian. 

\subsection{Organisation of the paper}

The remainder of the paper is organised as follows.
In \cref{section: sketch} we give a sketch of the proof of \cref{thm: main}.
In \cref{section: prelims} we collect notation, some probabilistic tools, and observations about the configuration model.
\Cref{section: expansion} is devoted to proving different edge-distribution and expansion properties of random regular graphs and their subgraphs, and the proof of \cref{thm: main} is given in \cref{section: proof}, using all the techniques that have been introduced before.
Finally, we prove \cref{thm: counter} in \cref{section: counter}.


\section{Outline of the proof of \texorpdfstring{\cref{thm: main}}{Theorem 1.2}}\label{section: sketch}

Consider $G = G_{n,d}$.
Let $H \subseteq G$ be such that $\Delta(H) \le (1/2-\eps)d$ and let $G'\coloneqq G\setminus H$.
We will prove that $G'$ contains a `sparse' spanning subgraph $R$ which has strong edge expansion properties.
These properties will then be used to provide a lower bound on the number of edges in $G$ whose addition would make $R$ Hamiltonian, or increase the length of a longest path in $R$ (such edges are commonly called `boosters', see e.g.~\cite{Kri16}).
We then argue that some of these edges must in fact be retained when passing to $G'$.
We then add such edges to $R$ and iterate the above process (at most $n$ times) until $R$ becomes Hamiltonian.

More specifically, as a preliminary step we `thin' the graph $G'$, that is, we take a subgraph $R \subseteq G'$ with $\Delta(R) \le \delta d$, for some $\delta \ll \eps$.
As described above, we consider a longest path in $R$ and then argue that it can be extended via edges in $G'\setminus R$.
The fact that $R$ is relatively `sparse' with respect to $G'$ will be important when calculating union bounds over all graphs $R$ of this type, at a later stage in the proof (this idea was introduced by \citet{BKS11b}).

Given many paths of maximum length and with different endpoints in $R$, it follows that there will be many edges whose addition will increase the length of a longest path (or make $R$ Hamiltonian).
A theorem of P\'osa implies that graphs with strong expansion properties will indeed contain many of such paths.
These expansion properties are captured by the notion of a 3-expander (see \cref{def: expand}).
Therefore, we wish to show that our thinned graph $R$ can be chosen to be a 3-expander.
This is one point where working with the random graph $G_{n,d}$ proves more difficult than working with $G_{n,p}$, due to the fact that the appearance of edges in $G_{n,d}$ is correlated.

The next step is to provide a lower bound on the number of edges whose addition to $R$ would increase the length of a longest path (or make $R$ Hamiltonian).
Here we further develop an approach of \citet{Mont17} who, instead of considering single edges that would bring $R$ closer to being Hamiltonian, considered `booster' edge pairs whose addition would yield the same result.
For example, if $R$ is connected and $P$ is a longest path in $R$ with endpoints $x$ and $y$, and $ab$ is an edge of $P$ (with $b$ closer to $y$ on $P$), then $\{ya, xb\}$ is a booster pair.
The main advantage of considering such pairs of edges is that it results in a much larger set of boosters for $R$.
More precisely, we show the existence of another thinned graph $F \subseteq G' \setminus R$ for which each booster we consider is of the form $\{e, e'\}$, where $e \in E(F)$ and $e'\in E(G')$ (see \cref{cor: boost}).

Finally, we can complete the proof of the main theorem by iteratively adding booster pairs to the thinned graph $R$, increasing the length of a maximum path in each step until $R$ becomes Hamiltonian. 
Two points are important here as to why we can iterate this process.
First, proving the existence of boosters (see \cref{lem: rand}) involves a union bound over all pairs of thinned graphs $R$ and $F$.
To bound this efficiently, we need that both $R$ and $F$ are relatively `sparse' with respect to $G'$.
But in each step we only add two booster edges to $R$, so it remains sparse.
Secondly, we take special care to ensure that no vertex is contained in too many of the boosters we add to $R$, ensuring that its degree in successive iterations remains small.
This process terminates after at most $n$ iterations, resulting in a graph $R'\subseteq G'$ which is Hamiltonian.


\section{Preliminaries}\label{section: prelims}

\subsection{Notation}

For $n \in \mathbb{N}$, we denote $[n] \coloneqq \{1, \ldots , n\}$.
Given any set $S$, we denote $S^{(2)}\coloneqq\{\{s_1,s_2\}:s_1,s_2\in S, s_1\neq s_2\}$.
The parameters which appear in hierarchies are chosen from right to left.
That is, whenever we claim that a result holds for $0 < a \ll b \le 1$, we mean that there exists a non-decreasing function $f\colon [0, 1) \to [0,1)$ such that the result holds for all $a>0$ and all $b \le 1$ with $a \le f(b)$.
We will not compute these functions explicitly.

Throughout this paper, the word \emph{graph} will refer to a simple, undirected graph.
Whenever the graphs are allowed to have parallel edges or loops, we will refer to these as \emph{multigraphs}.
Given any (multi)graph $G=(V,E)$ and sets $A,B\subseteq V$, we will denote the (multi)set of edges of $G$ spanned by $A$ as $E_G(A)$, and the (multi)set of edges of $G$ having one endpoint in $A$ and one endpoint in $B$ as $E_G(A,B)$.
The number of such edges will be denoted by $e_G(A)$ and $e_G(A,B)$, respectively.
We will also write $e(G)$ for $e_G(V)$.
Given two (multi)graphs $G_1$ and $G_2$ on the same vertex set $V$, we write $G_1+G_2\coloneqq(V,E(G_1)\cup E(G_2))$, where the union represents set union for graphs and multiset union for multigraphs.
When $G_1$ and $G_2$ are graphs, we write $G_1\setminus G_2\coloneqq(V,E(G_1)\setminus E(G_2))$.
Given any vertex $v\in V$, we will denote the set of vertices which are adjacent to $v$ in $G$ by $N_G(v)$.
We define $N_G(A)\coloneqq\bigcup_{v\in A}N_G(v)$.
The \emph{degree} of vertex $v$ in a multigraph $G$ is $d_G(v)\coloneqq|\{e\in E(G):v\in e\}|+|\{e\in E(G):e=vv\}|$ (i.e.~each loop at $v$ contributes two to $d_G(v)$).
We denote $\Delta(G)\coloneqq\max_{v\in V}d_G(v)$ and $\delta(G)\coloneqq\min_{v\in V}d_G(v)$.
The (multi)graph $G$ is said to be \emph{$d$-regular} for some $d\in\mathbb{N}$ if all vertices have degree $d$.
Given a multigraph $G$ on $[n]$, we refer to the vector $\mathbf{d}=(d_G(1),\ldots,d_G(n))$ as its \emph{degree sequence}. 
In general, a vector $\mathbf{d}=(d_1,\ldots,d_n)$ with $d_i\in\mathbb{Z}_{\geq0}$ for all $i\in[n]$ is called \emph{graphic} if there exists a graph on $n$ vertices with degree sequence $\mathbf{d}$ (note that, as long as $\sum_{i=1}^nd_i$ is even, there is always a multigraph with degree sequence $\mathbf{d}$).
Given a graph $G$ and a real number $\alpha>0$, let $\mathcal{H}_\alpha(G)$ be the collection of all spanning subgraphs $H \subseteq G$ for which $d_H(v) \leq \alpha  d_G(v)$, for all $v \in V(G)$.

We will use $\mathcal{G}_{n,d}$ to denote the set of all $d$-regular graphs on vertex set $[n]$, and $G_{n,d}$ will denote a graph chosen from $\mathcal{G}_{n,d}$ uniformly at random.
Whenever we use this notation, we implicitly assume that $nd$ is even.
In more generality, given a graphic degree sequence $\mathbf{d}=(d_1,\ldots,d_n)$, we will denote the collection of all graphs on vertex set $[n]$ with degree sequence $\mathbf{d}$ by $\mathcal{G}_{n,\mathbf{d}}$, and $G_{n,\mathbf{d}}$ will denote a graph chosen from $\mathcal{G}_{n,\mathbf{d}}$ uniformly at random.

We use \emph{a.a.s.}~as an abbreviation for \emph{asymptotically almost surely}. 
Given a sequence of events $\{\mathcal{E}_n\}_{n\in\mathbb{N}}$, whenever we claim that $\mathcal{E}_n$ holds a.a.s., we mean that the probability that $\mathcal{E}_n$ holds tends to $1$ as $n$ tends to infinity. 
For the purpose of clarity, we will ignore rounding issues when dealing with asymptotic statements.
By abusing notation, given $p\geq0$ and $n\in\mathbb{N}$, we write $\mathrm{Bin}(n,p)$ for the binomial distribution with parameters $n$ and $\min\{p,1\}$.


\subsection{Probabilistic tools}\label{section:chernoff}

We will need the following Chernoff bound (see e.g.~\cite[Corollary 2.3]{JLR}).

\begin{lemma}\label{lem: Chernoff}
Let $X$ be the sum of $n$ independent Bernoulli random variables and let $\mu \coloneqq \mathbb{E}[X]$.
Then, for all $0\le \delta \le 1$ we have that $\mathbb{P}[|X-\mu|\geq\delta\mu]\leq2e^{-\delta^2\mu/3}$.
\end{lemma}

The following bound will also be used repeatedly (see e.g.~\cite[Theorem A.1.12]{AS16})\COMMENT{The theorem in \emph{The Probabilistic Method} reads as follows: 
If $X\sim\mathrm{Bin}(n,p)$ and $\beta>1$, then $\mathbb{P}[X\geq(\beta-1)np]\leq(e^{\beta-1}\beta^{-\beta})^{np}$.
It is easy to see that \cref{lem: betaChernoff} follows from this:
\[\mathbb{P}[X\geq\beta np]\leq\mathbb{P}[X\geq(\beta-1)np]\leq(e^{\beta-1}\beta^{-\beta})^{np}=\left(\frac{e}{\beta}\right)^{\beta np}\cdot\frac{1}{e^{np}}\leq\left(\frac{e}{\beta}\right)^{\beta np}.\]}.

\begin{lemma}\label{lem: betaChernoff}
Let $X\sim\mathrm{Bin}(n,p)$, and let $\beta>1$.
Then, $\mathbb{P}[X\geq\beta np]\leq\left(e/\beta\right)^{\beta np}$.
\end{lemma}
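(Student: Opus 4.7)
The statement is a standard upper-tail Chernoff bound for the binomial distribution, and my plan is the classical moment generating function argument combined with an exponential-form Markov inequality.

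First, I would fix any $t>0$ and apply Markov's inequality to the random variable $e^{tX}$ to obtain
\[
\mathbb{P}[X\geq \beta np]\;=\;\mathbb{P}[e^{tX}\geq e^{t\beta np}]\;\leq\;e^{-t\beta np}\,\mathbb{E}[e^{tX}].
\]
Writing $X$ as a sum of $n$ independent Bernoulli$(p)$ variables and using independence, the moment generating function factorises as $\mathbb{E}[e^{tX}]=(1-p+pe^t)^n$. The elementary inequality $1+x\leq e^x$, applied with $x=p(e^t-1)$, then yields the standard bound $\mathbb{E}[e^{tX}]\leq\exp\bigl(np(e^t-1)\bigr)$.

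Next, I would choose $t$ to minimise the right-hand side of the resulting inequality $\mathbb{P}[X\geq\beta np]\leq \exp\bigl(np(e^t-1)-t\beta np\bigr)$. Differentiating the exponent in $t$ gives the optimal choice $e^t=\beta$, i.e.\ $t=\ln\beta$, which is admissible since $\beta>1$ forces $t>0$. Substituting this value, the exponent becomes $np(\beta-1)-\beta np\ln\beta$, so
\[
\mathbb{P}[X\geq\beta np]\;\leq\;\frac{e^{np(\beta-1)}}{\beta^{\beta np}}\;=\;\left(\frac{e}{\beta}\right)^{\beta np}\cdot e^{-np}\;\leq\;\left(\frac{e}{\beta}\right)^{\beta np},
\]
where the last inequality uses $e^{-np}\leq 1$.

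There is no real obstacle here: the computation is textbook (it is essentially the proof given in Alon--Spencer, from which the lemma is cited), and the only small points to keep in mind are that $t=\ln\beta$ is positive precisely because $\beta>1$, and that the bound $\mathbb{E}[e^{tX}]\leq e^{np(e^t-1)}$ is where the tight multiplicative factor $e^{-np}$ comes from. Since the paper only needs the cleaner form $(e/\beta)^{\beta np}$, discarding that extra factor is harmless and keeps the statement convenient for the subsequent union-bound arguments.
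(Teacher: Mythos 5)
Your proof is correct and is essentially the paper's argument: the paper simply cites the standard Chernoff computation (Theorem A.1.12 in Alon--Spencer) and discards the same extra factor of $e^{-np}$ that you do at the end, whereas you carry out the moment-generating-function derivation in full. The optimisation $e^t=\beta$ and the final weakening step match the paper's (commented) derivation exactly.
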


Given any sequence of random variables $X=(X_1,\ldots,X_n)$ taking values in a set $A$ and a function $f\colon A^n\to\mathbb{R}$, for each $i\in[n]\cup\{0\}$ define $Y_i\coloneqq\mathbb{E}[f(X)\mid X_1,\ldots,X_i]$.
The sequence $Y_0,\ldots,Y_n$ is called the \emph{Doob martingale} for $f$.
All the martingales that appear in this paper will be of this form.
To deal with them, we will need the following version of the well-known Azuma-Hoeffding inequality.

\begin{lemma}[Azuma's inequality \cite{Azu67,Hoef63}]\label{lem: Azuma}
Let $X_0,X_1,\ldots$ be a martingale and suppose that $|X_i-X_{i-1}|\leq c_i$ for all $i\in\mathbb{N}$.
Then, for any $n,t\in\mathbb{N}$,
\[\mathbb{P}[|X_n-X_0|\geq t]\leq2\exp\left(\frac{-t^2}{2\sum_{i=1}^nc_i^2}\right).\]
\end{lemma}

Finally, the L\'ovasz local lemma will be useful. 
Let $\mathfrak{E} \coloneqq \{\mathcal{E}_1, \mathcal{E}_2, \ldots, \mathcal{E}_m\}$ be a collection of events. 
A \emph{dependency graph} for $\mathfrak{E}$ is a graph $H$ on vertex set $[m]$ such that, for all $i\in[m]$, $\mathcal{E}_i$ is mutually independent of $\{\mathcal{E}_j:j\neq i, j\notin N_H(i)\}$, that is, if $\mathbb{P}[\mathcal{E}_i]=\mathbb{P}[\mathcal{E}_i\mid\bigwedge_{j\in J}\mathcal{E}_j]$ for all $J\subseteq[m]\setminus(N_H(i)\cup\{i\})$.
We will use the following version of the local lemma (it follows e.g.~from~\cite[Lemma 5.1.1]{AS16}).

\begin{lemma}[L\'ovasz local lemma]\label{lem: LLL}
Let $\mathfrak{E} \coloneqq \{\mathcal{E}_1, \mathcal{E}_2, \ldots, \mathcal{E}_m\}$ be a collection of events and let $H$ be a dependency graph for $\mathfrak{E}$.
Suppose that $\Delta(H)\leq d$ and $\mathbb{P}[\mathcal{E}_i]\leq p$ for all $i\in[m]$.
If $ep(d+1)\leq1$, then 
\[\mathbb{P}\left[\bigwedge_{i=1}^m\overline{\mathcal{E}_i}\right]\geq(1-ep)^m.\]
\end{lemma}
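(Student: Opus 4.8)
The plan is to deduce this from the general (asymmetric) form of the local lemma, e.g.~\cite[Lemma~5.1.1]{AS16}: if $A_1,\dots,A_m$ are events with a dependency digraph $D$ and there are reals $x_i\in[0,1)$ with $\mathbb{P}[A_i]\le x_i\prod_{(i,j)\in E(D)}(1-x_j)$ for every $i$, then $\mathbb{P}\big[\bigwedge_{i=1}^m\overline{A_i}\big]\ge\prod_{i=1}^m(1-x_i)$. Our undirected dependency graph $H$ yields a dependency digraph by replacing each edge by its two arcs, so that the out-neighbourhood of $i$ is exactly $N_H(i)$, of size at most $\Delta(H)\le d$.

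First I would apply this with the uniform choice $x_i\coloneqq ep$ for all $i\in[m]$. If $d=0$ the events are mutually independent and the bound is immediate (indeed $\mathbb{P}[\bigwedge_i\overline{\mathcal{E}_i}]=\prod_i(1-\mathbb{P}[\mathcal{E}_i])\ge(1-p)^m\ge(1-ep)^m$), so assume $d\ge1$; then $ep\le 1/(d+1)<1$ by hypothesis, so $x_i\in[0,1)$ is a legitimate choice. To verify the hypothesis of the local lemma it suffices, using $|N_H(i)|\le d$ and $0<1-ep\le1$, to show $ep\,(1-ep)^d\ge p$, i.e.
\[
(1-ep)^d\ \ge\ \frac1e .
\]
This is the one place where $ep(d+1)\le1$ is used: since then $ep\le 1/(d+1)$ and $t\mapsto(1-t)^d$ is non-increasing on $[0,1]$, we get
\[
(1-ep)^d\ \ge\ \Big(1-\frac1{d+1}\Big)^{d}\ =\ \Big(1+\frac1d\Big)^{-d}\ \ge\ e^{-1},
\]
by the elementary inequality $(1+1/d)^d\le e$. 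Hence the local lemma applies and gives $\mathbb{P}\big[\bigwedge_{i=1}^m\overline{\mathcal{E}_i}\big]\ge\prod_{i=1}^m(1-x_i)=(1-ep)^m$, as claimed.

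The only genuinely substantive point is the numerical inequality $(1-ep)^d\ge e^{-1}$, which is exactly what turns the hypothesis $ep(d+1)\le1$ into something usable; everything else is routine bookkeeping about dependency (di)graphs. If one prefers to avoid quoting the asymmetric local lemma, the same estimate drives a self-contained induction: one shows by induction on $|S|$ that $\mathbb{P}[\mathcal{E}_k\mid\bigwedge_{j\in S}\overline{\mathcal{E}_j}]\le ep$ for all $k\notin S$ --- splitting $S$ into $S\cap N_H(k)$ and $S\setminus N_H(k)$, bounding the numerator via mutual independence of $\mathcal{E}_k$ from $\{\mathcal{E}_j:j\in S\setminus N_H(k)\}$ and the denominator from below by the induction hypothesis applied to the at most $d$ events indexed by $S\cap N_H(k)$ --- and then multiplies out $\mathbb{P}[\bigwedge_{i=1}^m\overline{\mathcal{E}_i}]=\prod_{i=1}^m\mathbb{P}[\overline{\mathcal{E}_i}\mid\bigwedge_{j<i}\overline{\mathcal{E}_j}]\ge(1-ep)^m$.
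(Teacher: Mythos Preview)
Your proof is correct. The paper does not actually give its own proof of this lemma; it merely states that the result ``follows e.g.~from~\cite[Lemma 5.1.1]{AS16}'', and your argument is precisely such a derivation from the asymmetric local lemma with the natural choice $x_i=ep$, so your approach is exactly what the paper has in mind.
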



\subsection{The configuration model}\label{section:config}

We will work with the \emph{configuration model} introduced by \citet{Bol80}, which can be used to sample $d$-regular graphs uniformly at random.
In more generality, it can be used to produce graphs with any given graphic degree sequence $\mathbf{d}$.
The process to generate such graphs is as follows.

Given $n\in\mathbb{N}$ and a degree sequence $\mathbf{d}=(d_1,\ldots,d_n)$ with $m\coloneqq\sum_{i=1}^nd_i$ even, consider a set of $m$ vertices labelled as $x_{ij}$ for $i\in[n]$ and $j\in[d_i]$.
For each $i\in[n]$, we call the set $\{x_{ij}:j\in[d_i]\}$ the \emph{expanded set} of $i$.
Similarly, for any $X\subseteq[n]$, we call the set $\{x_{ij}:i\in X,j\in[d_i]\}$ the \emph{expanded set} of $X$.
Choose uniformly at random a perfect matching $M$ covering the expanded set of $[n]$.
Then, obtain a multigraph $\varphi(M)=([n],E)$ by letting $E$ be the following multiset: for each edge $e\in M$, consider its endpoints $e=x_{ij}x_{k\ell}$, for some $i,k\in[n]$, $j\in[d_i]$ and $\ell\in[d_k]$, and add $ik$ to $E$ (if $i=k$, this adds a loop to $E$).

When we consider a multigraph $G$ obtained via this configuration model, this will be denoted by $G\sim\mathcal{C}_{n,\mathbf{d}}$.
In particular, when we obtain a $d$-regular multigraph via the configuration model, we will denote this by $G\sim\mathcal{C}_{n,d}$.
We refer to the possible perfect matchings on the expanded set of $[n]$ as \emph{configurations}, and we will denote a configuration obtained uniformly at random by $M\sim\mathcal{C}_{n,\mathbf{d}}^{*}$.
By abusing notation, we will sometimes also use $\mathcal{C}_{n,\mathbf{d}}^{*}$ to denote the set of all configurations with parameters $n$ and $\mathbf{d}$.
In order to easily distinguish the setting of graphs from that of configurations, we will call the elements of the expanded sets \emph{points}, and each element in a configuration will be called a \emph{pairing}.

The above process may produce a multigraph with loops and/or multiple edges.
However, if $\mathbf{d}$ is a graphic degree sequence, then, when conditioning on the resulting multigraph being simple, the configuration model yields a graph $G\in\mathcal{G}_{n,\mathbf{d}}$ chosen uniformly at random.
The following proposition bounds the probability that this happens, and can be proved similarly to (part of) a result of \citet[Lemma~7]{CFR02}. 
For completeness, we include a full proof in \cref{appendix}.
It will be useful when analysing the distribution of edges in $G_{n,d}$ via the configuration model.
\COMMENT{So I think a simple switching argument gives it. Indeed, let $A_t$ be the set of configurations $C$ with exactly $t\geq 10d^3$ clash edges i.e the multigraph $\phi(C)+R$ is not simple with $t$ multi-edges. Now, let us consider the bipartite graph between $A=A_t$ to $\cup_{i\leq t-1} A_i=B$. For each such configuration $C$ there are $t\cdot (1-\delta)dn-4(d+\delta d)$ edges to $B$. Each vertex in $B$ has degree at most $dn\cdot 2(d^2+d)$. This implies that $|A_t|\leq |B|/d$, and so $\mathcal{P}[\cup_{t\geq 10d^3} A_t]\leq 1/d$. Now, for smaller values of $t$ we get that $|B|\geq |A_t|/d $ (substitute $t=1$ in the above expression), then we get that $|A_0|\geq (1-1/d)\cdot (1/d)^{10d^3}$, and this is enough.} 

\begin{proposition}\label{prop: small subgraph}
Let $0<\delta < 1/10$.
Let $d \le \log^2 n$ be a positive integer and let $R$ be a graph on vertex set $[n]$ with degree sequence $\mathbf{d'}=(d_1,\ldots,d_n)$ such that $d_i < \delta d$ for all $i \in [n]$. 
Let $\mathbf{d}\coloneqq (d-d_1, \dots, d -d_{n})$ and let $F\sim\mathcal{C}_{n,\mathbf{d}}$.
Then, if $n$ is sufficiently large,
\[\mathbb{P}[R + F\text{ is simple}]\geq e^{-3d^2}.\]
\end{proposition}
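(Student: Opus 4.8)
The plan is to analyse the configuration model for the degree sequence $\mathbf d = (d-d_1,\dots,d-d_n)$ directly and estimate the probability that the random multigraph $\varphi(M)$ contributes no loop and no edge parallel to itself or to an edge of $R$. Write $M\sim\mathcal C^{*}_{n,\mathbf d}$ and let $F=\varphi(M)$; note $\sum_i(d-d_i)=dn-2e(R)$ is even, and since each $d_i<\delta d<d/10$ the numbers $d-d_i$ are all of order $d$ and in particular positive, so the model is well-defined. Call a pairing of $M$ a \emph{clash} if it produces a loop, or if it produces an edge $ik$ with $i\ne k$ such that either $ik\in E(R)$ or some other pairing also produces $ik$. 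The event ``$R+F$ is simple'' is exactly the event that $M$ has no clash, so it suffices to bound from below the probability that the random perfect matching on the expanded point set avoids all ``bad'' pairings.

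First I would recall the standard fact about uniformly random perfect matchings on a set of $N$ points: if we expose the pairings one point at a time, the partner of each as-yet-unmatched point is uniform among the remaining unmatched points. I would use this to run a short first-moment / sequential-exposure argument. Let $L$ be the number of loops of $F$ and $Q$ the number of ``repeated'' edges, where by repeated I mean an unordered pair $\{i,k\}$ (with $i\neq k$) such that $ik\in E(R)$ or at least two pairings of $M$ join the expanded sets of $i$ and $k$. The probability that two fixed points in the expanded set of the same vertex $i$ are matched to each other is $1/(N-1)$ where $N=dn-2e(R)=\Theta(dn)$; summing over $i$ and over the at most $\binom{d}{2}\le d^2$ such pairs gives $\mathbb E[L]=O(d^2 n / (dn))=O(d)$. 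For the edges clashing with $R$: for each edge $ik\in E(R)$ the expected number of pairings joining the expanded sets of $i$ and $k$ is at most $(d-d_i)(d-d_k)/(N-1)=O(d^2/(dn))=O(d/n)$, and summing over the $e(R)\le \delta d n/2$ edges of $R$ gives $O(\delta d^2)=O(d^2)$. For double edges within $F$: the expected number of ordered $4$-tuples of points, two in the expanded set of $i$ and two in that of $k$, that form two parallel $ik$-pairings is $O(d^2\cdot d^2)\cdot (N-1)^{-1}(N-3)^{-1}$ per pair $\{i,k\}$, and summing over the $\binom n2$ pairs gives $O(n^2 d^4/(dn)^2)=O(d^2)$. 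Hence $\mathbb E[L+Q]=O(d^2)$, say $\mathbb E[L+Q]\le c d^2$ for an absolute constant $c$.

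Now I would pass from the expectation bound to the required probability lower bound. The cleanest route is to condition on the value of $L+Q$ and rebuild the matching: conditionally on $L+Q = t$ one can switch away the $t$ bad pairings one at a time, and at each switch the number of ``good'' reconfigurations is at least (roughly) $(N - O(dt))^2$ while the number of ways the switch could reintroduce a clash is $O(d^2 n)$, so each switch succeeds with probability $\ge 1 - O(d^2/N)=1-O(d/n)$; alternatively, and more simply, since $L+Q$ is a nonnegative integer with $\mathbb E[L+Q]\le cd^2$, Markov's inequality is too weak but the same sequential-exposure computation actually shows that the events ``point $2j-1$ gets a bad partner'' can be made to behave like a sum of nearly-independent indicators, giving $\mathbb P[L+Q=0]\ge \prod (1 - O(d/n)) \cdot e^{-O(d^2)} \ge e^{-3d^2}$ for $n$ large. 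The honest way to make this rigorous, mirroring the cited argument of Cooper--Frieze--Reed (Lemma 7 of \cite{CFR02}) and sketched in the author's own footnote, is a switching argument: let $A_t$ be the set of configurations whose associated multigraph $R+\varphi(M)$ has exactly $t$ clashes; bound the bipartite ``switch'' graph between $A_t$ and $\bigcup_{s<t}A_s$ by counting, for each $C\in A_t$, the $\Theta(tdn)$ ways to remove a clash and the $\le dn\cdot O(d^2)$ ways each element of the lower level is hit, to deduce $|A_t|\le |A_{t-1}|/d$ once $t$ is a large enough multiple of $d^3$ say, and then iterate $|A_0|\ge (1-o(1)) d^{-O(d^3)}$; since $d\le\log^2 n$ this is still $\ge e^{-3d^2}$ once $n$ is large — wait, that last inequality fails, so I would instead keep the telescoping only down to $t=O(d^2)$ using $\mathbb E[L+Q]=O(d^2)$ to control $\sum_{t}t|A_t|$, which forces $|A_0|/|\mathcal C^*_{n,\mathbf d}|\ge 1-O(1/?)$ — this is exactly the delicate point.

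The main obstacle, then, is precisely this last passage: a crude first-moment bound only shows $\mathbb P[\text{not simple}]\le cd^2$, which is useless, so one genuinely needs the multiplicative structure. The right statement to aim for is that $L+Q$ is stochastically dominated (or close enough) by a sum of independent indicators, or that the conditional expected increase in clashes stays $O(d^2)$ at every exposure step, from which a standard argument (as in \cite{CFR02}) yields $\mathbb P[L+Q=0]=e^{-\Theta(d^2)}$ with the constant eventually below $3$. I would therefore spend the bulk of the proof setting up the sequential exposure of the matching $M$ point by point, showing that at each step the probability of creating a new clash is $O(d/n)$ \emph{regardless of the history}, and using the bound $\mathbb E[L+Q]=O(d^2)$ together with the fact that clashes, once the matching is built, can be independently ``repaired'' with success probability $1-O(d/n)$ to conclude $\mathbb P[R+F\text{ simple}]\ge (1-O(d/n))^{O(d^2)}e^{-O(d^2)}\ge e^{-3d^2}$ for all sufficiently large $n$. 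The condition $d\le\log^2 n$ is what guarantees $d^2/n\to 0$ and hence that the constant in the exponent can be absorbed below $3$.
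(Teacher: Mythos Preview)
Your proposal contains a genuine gap at exactly the point you yourself call ``the delicate point''. The final plan --- expose the matching one pairing at a time and claim that the probability of creating a clash at each step is $O(d/n)$ \emph{regardless of the history} --- is false as stated. With $N=\Theta(dn)$ points, at step $i$ there are $N-2i+1$ available partners, while the number of forbidden partners for a given point is $O(d^2)$. Hence the per-step clash probability is $O(d^2/(N-2i+1))$, and summing over all $N/2$ steps gives $\Theta(d^2\log N)=\Theta(d^2\log n)$, not $O(d^2)$. So the naive product bound is $e^{-\Theta(d^2\log n)}$, far too small. Your switching sketch hits the same wall: you correctly note that it yields $d^{-O(d^3)}$, which is not $\ge e^{-3d^2}$, and your attempted repair via the first moment of $L+Q$ does not recover the multiplicative structure.

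The paper's proof resolves this by splitting the construction of $M$ into two phases with two different algorithms. For the first $D/3$ pairings it uses the naive sequential exposure (Algorithm~A): since at least $D/3$ points always remain unmatched, the per-step clash probability is genuinely $\le 3d^2/D$, and the product over $D/3$ steps is $\ge e^{-3d^2/2}$. For the last $D/6$ pairings it uses a different rule (Algorithm~B): given a fixed ordering of the points, either adjoin the next pair $\{x_{2i-1},x_{2i}\}$ directly, or pick a uniformly random existing pairing $z_1z_2$ and replace it by $\{x_{2i-1}z_1,x_{2i}z_2\}$. One checks this still produces a uniform configuration, and now the relevant random choice is over the $\ge D/3$ pairings already placed rather than the shrinking pool of unmatched points, so the per-step clash probability stays $\le 6d^2/D$ uniformly. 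The product over the remaining $D/6$ steps, together with the telescoping factor $\prod(2i-2)/(2i-1)\ge 1/2$, gives another $e^{-3d^2/2}$, and combining the two phases yields $e^{-3d^2}$. The two-algorithm device is the missing idea in your approach.
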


Note that, by choosing $R$ to be the empty graph on $n$ vertices, we obtain a lower bound on the probability that the multigraph obtained by a random configuration is simple.

When studying the configuration model, it will be useful to consider the following process to generate $M\sim\mathcal{C}_{n,\mathbf{d}}^*$.
Let $\mathbf{d}=(d_1,\ldots,d_n)$ and suppose that $m\coloneqq\sum_{i=1}^nd_i$ is even.
Label the points of the expanded set of $[n]$ in any arbitrary order, $x_1,\ldots,x_m$, and identify them naturally with the set $[m]$.
Start with an empty set of pairings $M_0$.
Inductively, for each $i\in[m]$, if $i$ is covered by $M_{i-1}$, let $M_i\coloneqq M_{i-1}$; otherwise, choose a point $j\in[m]\setminus(V(M_{i-1})\cup\{i\})$ uniformly at random and define $M_i\coloneqq M_{i-1}\cup\{ij\}$.
We sometimes refer to $M_i$ as the $i$-th \emph{partial configuration}.
Finally, let $M\coloneqq M_m$.
It is clear that the resulting configuration $M$ is generated uniformly at random, independently of the labelling of the expanded set of $[n]$.

We will often be interested in bounding the number of edges in $G_{n,d}$ between two sets of vertices.
For this, it will be useful to consider binomial random variables that stochastically dominate the number of edges.
We formalise this via the following lemma.

\begin{lemma}\label{lem: StochDom}
Let $n,d\in\mathbb{N}$ with $d<n$, and let $\delta\in[0,1)$.
Let $\mathbf{d}=(d_1,\ldots,d_n)$ with $\sum_{i=1}^nd_i$ even be such that $(1-\delta)d\leq d_i\leq d$ for all $i\in[n]$.
Let $G\sim\mathcal{C}_{n,\mathbf{d}}$ and let $A,B\subseteq[n]$ be any (not necessarily disjoint) sets of vertices such that $2|A|<(1-\delta)n$.
Then, the random variable $e_G(A,B)$ is stochastically dominated by a random variable $X\sim\mathrm{Bin}(\sum_{a\in A}d_a,|B|/((1-\delta)n-2|A|))$.
\end{lemma}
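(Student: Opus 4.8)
The plan is to generate $M\sim\mathcal{C}_{n,\mathbf{d}}^*$ using the point-by-point process described just above the statement, choosing the labelling $x_1,\dots,x_m$ of the expanded set of $[n]$ so that all points in the expanded set of $A$ come first. For each point $x_i$ lying in the expanded set of $A$ that is not yet covered by the partial configuration $M_{i-1}$, the process picks a uniformly random free point $j$; this creates an edge of $G$ incident to $A$, and it contributes to $e_G(A,B)$ precisely when the chosen point $j$ lies in the expanded set of $B$. So $e_G(A,B)$ is at most the number of these at most $\sum_{a\in A}d_a$ choices that land in the expanded set of $B$ (``at most'' because a point of $A$ may already be matched to another point of $A$, and because an edge inside $A\cap B$ could be double-counted, both of which only decrease $e_G(A,B)$ relative to the count of ``landings in the expanded set of $B$'').

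The key point is to bound, for each such choice, the conditional probability of landing in the expanded set of $B$. When we process the $i$-th point, the set of free points excludes $x_i$ itself and the points already matched. Crucially, after processing all of $A$'s expanded set we have made at most $\sum_{a\in A}d_a$ pairings, each removing at most two points, so at any moment during this phase the number of matched points is at most $2\sum_{a\in A}d_a \le 2|A|d \le 2|A|\cdot\frac{m}{(1-\delta)n}$; more simply, since $d_a\le d$ and the expanded set of $[n]$ has size $m=\sum_i d_i \ge (1-\delta)dn$, the number of free points is always at least $m - 2\sum_{a\in A}d_a \ge (1-\delta)dn - 2|A|d = d((1-\delta)n - 2|A|)$, which is positive by the hypothesis $2|A|<(1-\delta)n$. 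Meanwhile the number of free points in the expanded set of $B$ is at most $|B|d \le |B|\cdot d$. Hence each such choice lands in the expanded set of $B$ with conditional probability at most $\frac{|B|d}{d((1-\delta)n-2|A|)} = \frac{|B|}{(1-\delta)n-2|A|} =: p$, regardless of the history.

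With that bound in hand, the stochastic domination follows by a standard coupling: reveal the choices one at a time, and whenever the process makes a ``landing'' decision with conditional probability $q\le p$, couple it with an independent $\mathrm{Bernoulli}(p)$ variable so that the Bernoulli variable is $1$ whenever the landing occurs; points of $A$ that are skipped (already matched) or choices that are not even ``live'' can be padded with extra independent $\mathrm{Bernoulli}(p)$ trials to reach exactly $N\coloneqq\sum_{a\in A}d_a$ of them. This produces $X\sim\mathrm{Bin}(N,p)$ with $e_G(A,B)\le X$ pointwise, which is exactly the claim. Formally one can phrase this as: conditioned on any value of the first $i-1$ decisions, the $i$-th indicator is dominated by $\mathrm{Bernoulli}(p)$, and then invoke the elementary fact that a sequence of indicators each conditionally dominated by $\mathrm{Bernoulli}(p)$ is jointly dominated by $\mathrm{Bin}(N,p)$.

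The main obstacle, and the only place needing care, is bookkeeping: making sure the count of ``landings in the expanded set of $B$'' genuinely upper-bounds $e_G(A,B)$ (handling loops, multi-edges, and edges within $A\cap B$ correctly — all of which only help) and verifying that the conditional landing probability bound $p$ holds uniformly over the history, for which the clean inequality ``free points $\ge d((1-\delta)n-2|A|)$ throughout the $A$-phase'' is the crux; once that is established the domination is routine.
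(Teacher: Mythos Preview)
Your proposal is correct and follows essentially the same approach as the paper: label the expanded set so that the points of $A$ come first, reveal pairings one at a time, and observe that at each step during the $A$-phase there are at most $|B|d$ free points in the expanded set of $B$ and at least $(1-\delta)nd-2|A|d$ free points overall, so each indicator is conditionally dominated by a $\mathrm{Bernoulli}(p)$ with $p=|B|/((1-\delta)n-2|A|)$. If anything, your write-up is slightly more careful than the paper's in noting that the count of ``landings in $B'$'' only \emph{upper}-bounds $e_G(A,B)$ (handling edges inside $A\cap B$ and already-matched points), whereas the paper asserts equality; but this only works in your favour for the domination.
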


\begin{proof}
Let $t\coloneqq\sum_{a\in A}d_a$.
Let $\mathcal{X}$, $A'$ and $B'$ be the expanded sets of $[n]$, $A$ and $B$, respectively.
Label the points of $\mathcal{X}$ so that all the points in $A'$ come first, that is, $A'=\{x_1,\ldots,x_t\}$.
Generate a random configuration $M\sim\mathcal{C}_{n,\mathbf{d}}^*$ following this labelling.
Then, $e_G(A,B)$ is the number of pairings in $M$ with one endpoint in $A'$ and the other in $B'$, and we will estimate the probability that each pairing added to $M$ contributes to $e_G(A,B)$.

First, note that all pairings added after $M_t$ do not contribute to $e_G(A,B)$, as they do not have an endpoint in $A'$.
For each $i\in[t]$, define an indicator random variable $X_i$ which takes value $1$ if $M_i\neq M_{i-1}$ and $e=x_iy\in M_i\setminus M_{i-1}$ is such that $y\in B'$, and $0$ otherwise, so that $e_G(A,B)=\sum_{i\in[t]}X_i$.
Observe that, in the above process, the bound
\[\mathbb{P}[X_i=1\mid M_i\neq M_{i-1}]\leq\frac{|B|}{(1-\delta)n-2|A|}\]
holds for all $i\in[t]$ since at every step of the process there are at most $|B|d$ points available in $B'$ and at least $(1-\delta)nd-2|A|d$ points available in $\mathcal{X}\setminus(V(M_{i-1})\cup\{x_i\})$.
On the other hand, $\mathbb{P}[X_i=1\mid M_i=M_{i-1}]=0$, so given $M_0,M_1,\ldots,M_{i-1}$, each $X_i$ is stochastically dominated by a Bernoulli random variable $Y_i$ with parameter $|B|/((1-\delta)n-2|A|)$.
By summing over all $i\in[t]$, we conclude that $e_G(A,B)$ is stochastically dominated by $X\sim\mathrm{Bin}(t,|B|/((1-\delta)n-2|A|))$.
\end{proof}


\section{On the existence of a sparse 3-expander}\label{section: expansion}

\begin{definition}\label{def: expand}
An $n$-vertex graph $G$ is called a \emph{$3$-expander} if it is connected and, for every $S \subseteq [n]$ with $|S| \le n/400$, we have $|N_G(S)| \ge 3|S|$.
\end{definition}

In order to give bounds on the distribution of edges in $G_{n,d}$ we will use an edge-switching technique, first introduced by \citet{MW90B}.
We will consider the following switching.

\begin{definition}\label{def: switch}
Let $G=(V, E)$ and $G' = (V, E')$ be two multigraphs on the same vertex set such that $|E| = |E'|$.
We write $G\sim G'$ if there exist $u_1u_2, v_1v_2 \in E$ such that $E' = (E \setminus \{u_1u_2, v_1v_2\}) \cup \{u_1v_1, u_2v_2\}$.
\end{definition}

The following lemma bounds the probability that certain variables on configurations deviate from their expectation.

\begin{lemma}\label{lem: mart}
Let $\mathbf{d}=(d_1,\ldots,d_n)$ be a degree sequence with $d_i \le \log^2 n$ for all $i\in[n]$, and  such that $\sum_{i=1}^nd_i$ is even.
Let $\Delta \coloneqq \max_{i \in [n]} \{d_i\}$.
Let $c>0$ and let $X$ be a random variable on $\mathcal{C}^*_{n,\mathbf{d}}$ such that, for every pair of configurations $M\sim M'$, we have $|X(M)-X(M')| \le c$.
Then, for all $\eps >0$, 
\[\mathbb{P}[|X - \mathbb{E}[X]| \ge \eps\mathbb{E}[X]] \le 2e^{- \frac{\eps^2\mathbb{E}[X]^2}{2\Delta nc^2}}.\]
\end{lemma}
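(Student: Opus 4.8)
\textbf{Proof proposal for \cref{lem: mart}.}

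The plan is to apply Azuma's inequality (\cref{lem: Azuma}) to a suitable Doob martingale built from the generation process of a uniform random configuration $M \sim \mathcal{C}^*_{n,\mathbf{d}}$. Concretely, I would use the sequential ``partial configuration'' process described after \cref{prop: small subgraph}: label the $m = \sum_{i=1}^n d_i$ points as $x_1, \ldots, x_m$, and for each uncovered point $x_i$ in turn, pair it with a uniformly random available point. Let $W_1, \ldots, W_m$ be the random choices made at each step (with $W_i$ a dummy value if $x_i$ is already matched when its turn comes), so that $M$ is a deterministic function $f$ of $(W_1, \ldots, W_m)$. Set $Y_i \coloneqq \mathbb{E}[X(M) \mid W_1, \ldots, W_i]$; this is the Doob martingale for $f$, with $Y_0 = \mathbb{E}[X]$ and $Y_m = X(M)$.

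The key step is to bound the martingale differences $|Y_i - Y_{i-1}|$. Here I would use a standard coupling/switching argument: fixing the first $i-1$ choices, compare the conditional distribution of $X$ when the $i$-th pairing is $x_i y$ versus when it is $x_i y'$. One can couple two completions of the configuration that agree except on a single switching of the type in \cref{def: switch} — replacing the pair $\{x_i y, z y'\}$ (for the appropriate $z$ that gets matched to $y'$ in the first completion) by $\{x_i y', z y\}$ — so that the two resulting configurations $M, M'$ satisfy $M \sim M'$ in the sense of \cref{def: switch}, and hence $|X(M) - X(M')| \le c$. Averaging over completions, this shows $|Y_i - Y_{i-1}| \le c$ for every $i$; that is, we may take $c_i = c$ for all $i$ in Azuma's inequality. (A slightly more careful version of this coupling — pairing up completions via a single switch — is the technical heart of the argument and where I'd expect to spend the most effort; the precise definition of which point plays the role of $z$, and checking the coupling is measure-preserving, requires care.) Actually, one can likely sharpen this: only the $i$ for which $x_i$ belongs to the expanded set of some vertex genuinely ``moves'', and more importantly each vertex $v$ contributes at most $d_v \le \Delta$ point-indices, which is what will let us replace a crude $m c^2$ by $\Delta n c^2$ in the exponent.

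To get the $\Delta n$ (rather than $m$) in the denominator, I would be more economical: observe that whenever step $i$ results in $Y_i = Y_{i-1}$ with probability $1$ (e.g.\ when $x_i$ is forced, or when the choice provably cannot affect $X$ by more than $0$), we may take $c_i = 0$. In fact the cleanest route is: group the points by the vertex they belong to, note $\sum_{i} c_i^2 \le \sum_{v \in [n]} d_v c^2 \le n \Delta c^2$ using $d_v \le \Delta$ and that there are $n$ vertices — wait, that gives $\sum_v d_v \le n\Delta$ directly, so $\sum_i c_i^2 \le m c^2 \le n \Delta c^2$ since $m = \sum_v d_v \le n\Delta$. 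So actually the bound $\sum_{i=1}^m c_i^2 \le n\Delta c^2$ follows immediately from $m \le n\Delta$, with no grouping needed. Then \cref{lem: Azuma} with $t = \eps \mathbb{E}[X]$ gives
\[
\mathbb{P}[|X - \mathbb{E}[X]| \ge \eps \mathbb{E}[X]] \le 2 \exp\left( \frac{-\eps^2 \mathbb{E}[X]^2}{2 \sum_{i=1}^m c_i^2} \right) \le 2 \exp\left( \frac{-\eps^2 \mathbb{E}[X]^2}{2 n \Delta c^2} \right),
\]
which is exactly the claimed inequality. The hypothesis $d_i \le \log^2 n$ is presumably only needed to make $\mathbb{E}[X]$ and the relevant quantities well-behaved elsewhere; for this lemma the only structural input is the one-switch Lipschitz condition and $m = \sum d_i \le n\Delta$. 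The main obstacle, to reiterate, is setting up the coupling in the martingale-difference bound so that a single step of the generation process corresponds to (at most) one switching operation of the form in \cref{def: switch}.
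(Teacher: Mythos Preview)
Your proposal is correct and follows essentially the same route as the paper: build the Doob martingale $Y_i=\mathbb{E}[X\mid M_i]$ along the partial-configuration process, bound $|Y_i-Y_{i-1}|\le c$ via a one-switch bijection between the sets of completions extending $M_{i-1}\cup\{x_iy\}$ and $M_{i-1}\cup\{x_iy'\}$, and then apply Azuma using $\sum_i c_i^2\le mc^2\le n\Delta c^2$. The paper makes the coupling explicit as a bijection $g_{j,k}\colon\mathcal{M}_j\to\mathcal{M}_k$ with $g_{j,k}(M')\sim M'$, which is exactly the map you sketch; your observation that the hypothesis $d_i\le\log^2 n$ plays no role in this lemma is also correct.
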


\begin{proof}
Let $m\coloneqq \sum_{i=1}^{n} d_i$.
Fix any labelling $x_1,\ldots,x_m$ of the expanded set of $[n]$.
Let $M \sim \mathcal{C}_{n,\mathbf{d}}^*$ be generated following this labelling.
Let the partial configurations of $M$ be $M_0,\ldots,M_m$.
For each $i\in[m]\cup\{0\}$, let 
\[Y_i(M) \coloneqq \mathbb{E}[X(M) \mid M_i]=\mathbb{E}[X(M) \mid M_0,\ldots,M_i].\]
It follows that the sequence $Y_0(M), Y_1(M), \dots, Y_{m}(M)$ is a Doob martingale, where $Y_0(M) = \mathbb{E}[X]$ and $Y_{m}(M) = X(M)$.
We will now show that the differences of this martingale are bounded by $c$.

For any $i\in[m]$, if $M_i=M_{i-1}$, then $Y_i(M)=Y_{i-1}(M)$ and there is nothing to prove, so assume that $M_i\neq M_{i-1}$, that is, when generating the $i$-th partial configuration, the $i$-th point does not lie in any of the previous pairings.
For each $j\in[m]\setminus(V(M_{i-1})\cup\{i\})$, let $\mathcal{M}_j$ be the set of configurations which contain $M_{i-1}$ as well as $ij$.
It is easy to see that for each $k\in[m]\setminus(V(M_{i-1})\cup\{i\})$ there is a bijection $g_{j,k}$ between $\mathcal{M}_j$ and $\mathcal{M}_k$ so that $g_{j,k}(M')\sim M'$ for all $M'\in\mathcal{M}_j$\COMMENT{For all points $j, k\in[m]\setminus(V(M_{i-1})\cup\{i\})$ with $j\neq k$, for each $M^1 \in \mathcal{M}_j$, there exists a unique point $\ell\in[m]\setminus(V(M_{i-1})\cup\{i,j,k\})$ such that $\{k,\ell\}\in M^1$.
By performing the switching $\{\{ i,j\},\{k,\ell\}\} \rightarrow \{\{ i,k\},\{j,\ell\}\}$, we obtain a unique configuration $M^2 \in \mathcal{M}_k$.
This gives a bijection between $\mathcal{M}_j$ and $\mathcal{M}_k$, as required.}.
Fix $j\in[m]\setminus(V(M_{i-1})\cup\{i\})$, let $N\coloneqq|\mathcal{M}_j|$ and label the configurations in $\mathcal{M}_j$ as $M_{j,1},\ldots,M_{j,N}$.
For all $k\in[m]\setminus(V(M_{i-1})\cup\{i,j\})$, label $\mathcal{M}_k$ by $M_{k,\ell}\coloneqq g_{j,k}(M_{j,\ell})$ for each $\ell\in[N]$.
By assumption, we have $|X(M_{j,\ell})-X(M_{k,\ell})| \le c$ for all distinct $j,k\in[m]\setminus(V(M_{i-1})\cup\{i\})$ and $\ell\in[N]$.
Using this, it is easy to conclude that $|Y_i(M)-Y_{i-1}(M)|\leq c$.\COMMENT{We have that 
\[Y_{i}(M)=\frac1N\sum_{t=1}^NX(M_{\ell,t})=\frac{1}{|[m]\setminus(V(M_{i-1})\cup\{i\})|}\frac1N\sum_{t=1}^N|[m]\setminus(V(M_{i-1})\cup\{i\})|X(M_{\ell,t})\]
for whichever $\ell$ is paired with $i$ in $M_i$.
On the other hand, 
\[Y_{i-1}(M)=\frac{1}{|[m]\setminus(V(M_{i-1})\cup\{i\})|}\sum_{j\in[m]\setminus(V(M_{i-1})\cup\{i\})}\frac1N\sum_{t=1}^NX(M_{j,t}).\]
It follows that
\begin{align*}
    |Y_{i}(M)-Y_{i-1}(M)|&=\left|\frac{1}{|[m]\setminus(V(M_{i-1})\cup\{i\})|}\sum_{j\in[m]\setminus(V(M_{i-1})\cup\{i\})}\frac1N\sum_{t=1}^N(X(M_{j,t})-X(M_{\ell,t}))\right|\\
    &\leq\frac{1}{|[m]\setminus(V(M_{i-1})\cup\{i\})|}\sum_{j\in[m]\setminus(V(M_{i-1})\cup\{i\})}\frac1N\sum_{t=1}^N|X(M_{j,t})-X(M_{\ell,t})|\leq c.
\end{align*}
}

The statement now follows by \cref{lem: Azuma}.
\end{proof}

The following proposition implies that the distribution of edges in $G_{n,d}$ behaves roughly as in a binomial random graph $G_{n,d/n}$, even after conditioning on the containment of some `sparse' subgraph.

\begin{proposition}\label{prop: edges}
For every $0<\eps\leq1/2$ there exists $\delta>0$ such that the following holds.
Let $d \le \log^2 n$ be a positive integer and let $G = G_{n,d}$.
Let $R$ be a graph on vertex set $[n]$ with $\Delta(R) < \delta d$.
Moreover, let $A \subseteq [n]$ and, for each $a \in A$, let $Z_a \subseteq [n]^{(2)} \setminus E(R)$ be a collection of edges incident to $a$ such that $z \coloneqq \sum_{a\in A}|Z_a|$ satisfies $z> \eps n^2$. 
Then, 
\[\mathbb{P}\left[\bigg|\sum_{a\in A}|Z_a \cap E(G)| - \frac{zd}{n}\bigg| \ge \eps\frac{zd}{n} \,\middle\vert\, R \subseteq G\right] \le e^{-(\eps/10)^4 nd}.\]
\end{proposition}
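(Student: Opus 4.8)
The plan is to reduce the conditional statement about $G_{n,d}$ to a statement about the configuration model, and then to apply the martingale concentration bound \cref{lem: mart}. First I would pass to the configuration model: write $\mathbf{d'}$ for the degree sequence of $R$ and $\mathbf{d}\coloneqq(d-d_1,\dots,d-d_n)$, and let $F\sim\mathcal{C}_{n,\mathbf{d}}$. By \cref{prop: small subgraph}, conditioning on $R+F$ being simple happens with probability at least $e^{-3d^2}\geq e^{-3\log^4 n}$, and conditional on this event $R+F$ has the distribution of $G_{n,d}$ conditioned on $R\subseteq G$ (since conditioning the configuration model on simplicity yields the uniform distribution on graphs with the given degree sequence, and $R$ together with a uniformly random graph on the complementary degree sequence that stays simple is exactly $G_{n,d}$ conditioned on $R\subseteq G$). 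Hence it suffices to bound, for $F\sim\mathcal{C}^*_{n,\mathbf{d}}$ (working on configurations), the probability that $\sum_{a\in A}|Z_a\cap E(R+F)|$ deviates from $zd/n$ by $\eps zd/n$, and then multiply the failure probability by $e^{3d^2}$ — which is negligible compared to $e^{-(\eps/10)^4 nd}$ provided $\delta$ (hence the implicit constants) are chosen appropriately and $n$ is large, since $d^2\leq\log^4 n\ll nd$.

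Next I would set up the random variable on configurations. For a configuration $M$ of $\mathcal{C}^*_{n,\mathbf{d}}$, let $X(M)\coloneqq\sum_{a\in A}|Z_a\cap E(R+\varphi(M))|$; equivalently, $X(M)$ counts pairings of $M$ whose projected edge lies in some $Z_a$ (the contribution of $R$ itself is a fixed shift, zero here since $Z_a\cap E(R)=\es$). The key structural point is the switching bound: if $M\sim M'$, then $M'$ is obtained by replacing two pairings by two others, so $\varphi(M)$ and $\varphi(M')$ differ in at most $4$ edges, and hence $|X(M)-X(M')|\leq 4$ — here I use that each edge $uv$ of the symmetric difference is counted at most twice in $\sum_a|Z_a\cap E(\cdot)|$ (once from $u\in A$, once from $v\in A$), so the change is at most $2\cdot 4=8$; take $c\coloneqq 8$. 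With $\Delta\coloneqq\max_i(d-d_i)\leq d\leq\log^2 n$, \cref{lem: mart} gives $\mathbb{P}[|X-\mathbb{E}[X]|\geq(\eps/2)\mathbb{E}[X]]\leq 2\exp(-(\eps/2)^2\mathbb{E}[X]^2/(2\Delta n c^2))$.

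The remaining task is to pin down $\mathbb{E}[X]$: I claim $\mathbb{E}[X]=(1+o(1))zd/n$, or at least $\mathbb{E}[X]\geq zd/(2n)$ and $\mathbb{E}[X]\leq 2zd/n$ say, which together with the concentration above yields the claimed bound after adjusting constants (and choosing $\delta$ small enough that the $R$-edges, of which there are at most $\delta dn/2$, do not distort the count by more than a $(1\pm\eps/2)$ factor relative to $zd/n\geq \eps nd$). To estimate $\mathbb{E}[X]$, note that for a fixed pair $\{a,b\}\in Z_a$ (with $b$ possibly in $A$ too, but each such edge lies in at most two of the $Z_a$'s), the probability that $ab\in E(\varphi(M))$ is, by a direct computation in the configuration model, $(1+o(1))(d-d_a)(d-d_b)/(nd)$; since $d_a,d_b<\delta d$, this is $(1+O(\delta))d/n$. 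Summing over all edges in $\bigcup_a Z_a$ with multiplicity — there are $z$ of them counted with multiplicity, so between $z/2$ and $z$ distinct edges — and being careful about the at-most-two overcounting, gives $\mathbb{E}[X]=(1\pm O(\delta))zd/n$. Since $z>\eps n^2$, we get $\mathbb{E}[X]\geq \eps nd/2$, so the exponent in the concentration bound is at most $-(\eps/2)^2(\eps nd/2)^2/(2\cdot\log^2 n\cdot 64 n)=-\Omega(\eps^4 n d^2/\log^2 n)$; since $d\geq D$ is large and we want $e^{-(\eps/10)^4 nd}$, this is comfortably enough once $D$ and $n$ are large (indeed the bound we get is even stronger).

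The main obstacle I anticipate is the bookkeeping in two places: first, the edge-probability computation $\mathbb{P}[ab\in E(\varphi(M))]=(1+o(1))(d-d_a)(d-d_b)/(nd)$ in the configuration model with an inhomogeneous degree sequence (one has to control both the main term and the error uniformly over all the relevant pairs, and handle the case $a=b$ or overlapping pairs), and second, tracking how the factor $e^{3d^2}$ from the simplicity conditioning interacts with the target exponent — this is where I need $d\leq\log^2 n$ so that $d^2\leq\log^4 n=o(nd)$, and where the precise value of $(\eps/10)^4$ versus the $(\eps/2)^2$ I obtain gives me the slack to absorb all lower-order losses. Everything else (the reduction to configurations, the switching Lipschitz bound, the application of \cref{lem: mart}) is routine.
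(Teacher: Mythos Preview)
Your approach is essentially the paper's: pass to the configuration model on the complementary degree sequence, observe that the edge count has switching-Lipschitz constant $c=8$, apply \cref{lem: mart}, and absorb the $e^{3d^2}$ cost of conditioning on simplicity via \cref{prop: small subgraph}. The only methodological difference is that the paper bounds $\mathbb{E}[X]$ via the stochastic domination of \cref{lem: StochDom} (bounding the complement $e_F(a,[n]\setminus V(Z_a))$ by a binomial) rather than by computing individual edge probabilities directly; both routes give $\mathbb{E}[X]=(1\pm O(\delta))zd/n$.

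There is one genuine slip in your final numerics. In the exponent from \cref{lem: mart} you substitute $\Delta\leq\log^2 n$ and obtain $-\Omega(\eps^4 nd^2/\log^2 n)$, then assert this is ``even stronger'' than the target $-(\eps/10)^4 nd$. That is false for constant $d$: when $d=D$ is fixed and $n\to\infty$, your exponent is only of order $n/\log^2 n$, far weaker than the required $nd$. The fix is already in your own setup: you wrote $\Delta=\max_i(d-d_i)\leq d$, so plug $\Delta\leq d$ (not $\log^2 n$) into the denominator $2\Delta nc^2$. This yields an exponent of order $-\eps^4(zd/n)^2/(128dn)\leq -\eps^4(\eps nd)^2/(128dn)=-\eps^6 nd/128$, which is indeed at most $-(\eps/10)^4 nd$ and matches the paper's computation (the paper gets $-\eps^4 nd/512$ using the sharper $\eps'$ they feed into \cref{lem: mart}).
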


\begin{proof}
Let $0< \delta \ll \eps$.
For each $i\in[n]$, let  $d_i \coloneqq d - d_R(i) > (1-\delta)d$, and let $\mathbf{d}\coloneqq(d_1,\ldots,d_n)$.
Let $M \sim\mathcal{C}_{n,\mathbf{d}}^*$ and let $F=\varphi(M)$, so that $F \sim\mathcal{C}_{n,\mathbf{d}}$ and $R+F$ is a $d$-regular multigraph.
By \cref{lem: StochDom}, for each $a\in A$, the random variable $Y_a\sim\mathrm{Bin}(d_a, (n-|Z_a|)/((1-\delta)n - 2))$ stochastically dominates $e_F(a, [n]\setminus (V(Z_a)\setminus\{a\}))$\COMMENT{Apply \cref{lem: StochDom} with $a$ playing the role of $A$ and $[n]\setminus(V(Z_a)\setminus\{a\})$ playing the role of $B$.}.
Let $Z(F) \coloneqq \sum_{a\in A}|Z_a \cap E(F)|$.

Note that $\mathbb{E}[Y_a] < (1+\eps^3)d_a(n-|Z_a|)/n$\COMMENT{$\mathbb{E}[Y_a]=d_a(n-|Z_a|)/((1-\delta)n - 2)<d_a(n-|Z_a|)/((1-2\delta)n)<(1+\eps^3)d_a(n-|Z_a|)/n$.} for all $a\in A$.
It then follows that $\mathbb{E}[|Z_a \cap E(F)|] \geq d_a - \mathbb{E}[Y_a] \ge (1+\eps^3)|Z_a|d_a/n - \eps^3 d_a$\COMMENT{We have that 
\begin{align*}
    \mathbb{E}[|Z_a \cap E(F)|]&=\mathbb{E}[d_a-e_F(a, [n]\setminus (V(Z_a)\setminus\{a\}))]\geq d_a-\mathbb{E}[Y_a]>d_a - (1+\eps^3)d_a(n-|Z_a|)/n\\
    &=\frac{d_a|Z_a|}{n}-\eps^3d_a+\eps^3\frac{d_a|Z_a|}{n} = (1+\eps^3)\frac{d_a|Z_a|}{n} - \eps^3 d_a.
\end{align*}}.
Therefore, we have $\mathbb{E}[Z(F)] \ge (1-\eps^2)zd/n$\COMMENT{$\mathbb{E}[Z(F)]=\sum_{a\in A}\mathbb{E}[|Z_a \cap E(F)|]\geq(1+\eps^3)(1-\delta)d/n\sum_{a\in A}|Z_a| - \eps^3nd \geq((1+\eps^3)(1-\delta)-\eps^2)zd/n\geq (1-\eps^2)zd/n$, where in the second inequality we use the bound on $z$, which results in $\eps^3nd\leq\eps^2zd/n$.}.
Now, note that $|Z(F)-Z(F')|\leq8$\COMMENT{Each switching affects a maximum of four edges, and each edge may appear in a maximum of two instances of $Z_a$.} when $F\sim F'$. 
Let $Z'\colon \mathcal{C}^*_{n,\mathbf{d}} \rightarrow \mathbb{Z}$ be such that $Z'(M) = Z(F)$ whenever $\varphi(M) = F$.
It follows that $|Z'(M)-Z'(M')|\leq8$\COMMENT{This follows since $M\sim M'$ implies that $\varphi(M) \sim \varphi(M')$.} when $M\sim M'$.
Moreover, $\mathbb{E}[Z'(M)]=\mathbb{E}[Z(F)]$.
Therefore, we can apply \cref{lem: mart} to obtain \COMMENT{We have that 
\begin{align*}
    \mathbb{P}\left[Z'(M)\leq(1-\eps)\frac{zd}{n}\right]&=\mathbb{P}\left[Z'(M)\leq\frac{1-\eps}{1-\eps^2}(1-\eps^2)\frac{zd}{n}\right]\leq\mathbb{P}\left[Z'(M)\leq\frac{1-\eps}{1-\eps^2}\mathbb{E}[Z'(M)]\right]\\
    &\leq\mathbb{P}\left[|Z'(M)-\mathbb{E}[Z'(M)]|\geq\left(1-\frac{1-\eps}{1-\eps^2}\right)\mathbb{E}[Z'(M)]\right].
\end{align*}
Applying \cref{lem: mart}, in the exponent we get 
\[-\left(1-\frac{1-\eps}{1-\eps^2}\right)^2\frac{\mathbb{E}[Z'(M)]^2}{2\cdot8^2\max\{d_i\}n}\leq-\left(\frac{\eps(1-\eps)}{1-\eps^2}\right)^2\frac{(1-\eps^2)^2(zd/n)^2}{2\cdot8^2\max\{d_i\}n}\leq -\eps^2(1-\eps)^2\frac{\eps^2 n^4 d^2/n^2}{128dn} \leq -\frac{\eps^4 nd}{512},\]
where in the second inequality we use that $z \geq \eps n^2$, and in the last we use that $1-\eps\geq1/2$.
Note also that we are still working here with the configurations that yield such graphs. We have not yet conditioned on these graphs being simple.
}
\[\mathbb{P}\left[Z'(M)\le (1-\eps)\frac{zd}{n}\right] \le 2e^{-\eps^4 nd/512}.\]
By definition, the same bound holds for $Z(F)$.
It now follows from \cref{prop: small subgraph} that\COMMENT{The following equation is of the form $\mathbb{P}[A \mid B] = \mathbb{P}[A, B]/\mathbb{P}[B] \le \mathbb{P}[A]/\mathbb{P}[B]$, where here $A$ is the property we are interested in, and $B$ is the property of being simple. We have
$\mathbb{P}[Z(F)\le (1-\eps)zd/n \mid R+F\text{ is simple}]\le  \mathbb{P}[Z(F) \le (1-\eps)zd/n]/\mathbb{P}[R+F\text{ is simple}]
\le (e^{3d^2})2e^{-\eps^4 nd/512}$.} 
\begin{equation}\label{eqnn1}
\mathbb{P}\left[Z(F)\le (1-\eps)\frac{zd}{n} \,\middle\vert\, R+F \text{ is simple}\right]\le 2e^{3d^2}e^{-\eps^4 nd/512}.
\end{equation}

By a similar argument we can show that \COMMENT{By \cref{lem: StochDom}, for each $a\in A$, the random variable $Y_a\sim\mathrm{Bin}(d_a, ((|Z_a|+1)/((1-\delta)n - 2)))$ stochastically dominates $e_F(a, V(Z_a))$. 
We have that $\mathbb{E}[Y_a] < (1+\eps^3)d|Z_a|/n$.
Since $|Z_a\cap E(F)|\leq e_F(a,V(Z_a))$, it follows that $\mathbb{E}[|Z_a \cap E(F)|] \le \mathbb{E}[Y_a] < (1+\eps^3)d|Z_a|/n$.
Therefore, $\mathbb{E}[Z(F)] \le (1+\eps^3)zd/n$.
As before, we have that 
\begin{align*}
    \mathbb{P}\left[Z'(M)\geq(1+\eps)\frac{zd}{n}\right]&=\mathbb{P}\left[Z'(M)\geq\frac{1+\eps}{1+\eps^3}(1+\eps^3)\frac{zd}{n}\right]\leq\mathbb{P}\left[Z'(M)\geq\frac{1+\eps}{1+\eps^3}\mathbb{E}[Z'(M)]\right]\\
    &\leq\mathbb{P}\left[|Z'(M)-\mathbb{E}[Z'(M)]|\geq\left(\frac{1+\eps}{1+\eps^3}-1\right)\mathbb{E}[Z'(M)]\right].
\end{align*}
We can now apply \cref{lem: mart}.
As a bound in the exponent we have
\[-\left(\frac{1+\eps}{1+\eps^3}-1\right)^2\frac{\mathbb{E}[Z'(M)]^2}{2\cdot8^2\max\{d_i\}n}\leq-\left(\frac{\eps(1-\eps^2)}{1+\eps^3}\right)^2\frac{(1-\eps^2)^2(zd/n)^2}{2\cdot8^2\max\{d_i\}n}\leq -\eps^2\frac{(1-\eps^2)^4}{(1+\eps^3)^2}\frac{\eps^2 n^4 d^2/n^2}{128dn} \leq -\frac{\eps^4 nd}{512},\]
where in the first inequality we have used the lower bound on $\mathbb{E}[Z'(M)]$ derived in the first part of the proof, in the second inequality we use that $z \geq \eps n^2$, and in the last we use that $(1-\eps^2)^4/(1+\eps^3)^2\geq1/4$.
Therefore,
\[\mathbb{P}\left[Z'(M)\ge (1+\eps)\frac{zd}{n}\right] \le 2e^{-\eps^4 nd/512},\]
and the same bound holds for $Z(F)$.
The final bound follows by \cref{prop: small subgraph}.}
\begin{equation}\label{eqnn2}
\mathbb{P}\left[Z(F)\ge (1+\eps)\frac{zd}{n} \,\middle\vert\, R+F \text{ is simple}\right] \le 2e^{3d^2}e^{-\eps^4 nd/512}.
\end{equation}
The result follows by combining \eqref{eqnn1} and \eqref{eqnn2}.
\end{proof}

\begin{lemma}\label{lem: spanset}
For every $0<\delta <10^{-5}$ there exists $D \in \mathbb{N}$ such that for any $D< d \le \log^2 n$ we have that a.a.s.~the random graph $G_{n,d}$ satisfies the following properties.
\begin{enumerate}[label=(\roman*)]
    \item\label{item: spanset1} For every $S\subseteq [n]$ with $\delta^2 d \leq |S| \leq 5\delta^2 n$, we have $e_{G_{n,d}}(S)\leq \delta d |S|/25$.
    \item\label{item: spanset2} For every $S\subseteq [n]$ with $5\delta^2 n \leq |S| \leq n/100$, we have $e_{G_{n,d}}(S) \leq d|S|/25$.
\end{enumerate}
\end{lemma}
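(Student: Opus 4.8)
The plan is to prove both \ref{item: spanset1} and \ref{item: spanset2} as first-moment (union-bound) statements: for a fixed vertex set $S$ of the relevant size I would bound the probability that $e_{G_{n,d}}(S)$ exceeds the stated threshold, and then sum over all such $S$ and over all admissible sizes. The two ranges require slightly different inputs, the dividing line being whether the number of candidate internal pairs of $S$ is of order $n^2$ — in which case \cref{prop: edges} applies directly — or not.

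For \ref{item: spanset2}, where $5\delta^2 n \le |S| \le n/100$, I would apply \cref{prop: edges} with $R$ the empty graph on $[n]$ (so the hypothesis on $\Delta(R)$ is vacuous), with $A \coloneqq S$, and with $Z_a \coloneqq \{\{a,v\} : v \in S \setminus \{a\}\}$ for each $a \in S$. Then $z = |S|(|S|-1) > \delta^4 n^2$ once $n$ is large, so \cref{prop: edges} can be invoked with its parameter $\eps$ set equal to $\delta^4 \le 1/2$; since $\sum_{a \in S}|Z_a \cap E(G_{n,d})| = 2 e_{G_{n,d}}(S)$, its (upper-tail) conclusion gives that, except with probability $e^{-(\delta^4/10)^4 nd}$, one has $e_{G_{n,d}}(S) \le (1+\delta^4)|S|^2 d/(2n) < d|S|/25$, the last inequality using $|S| \le n/100$. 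A union bound over the at most $2^n$ choices of $S$ makes the total failure probability at most $2^n e^{-(\delta^4/10)^4 nd}$, which is $o(1)$ once $D$ is large enough (polynomially) in $\delta^{-1}$.

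For \ref{item: spanset1}, where $\delta^2 d \le |S| \le 5\delta^2 n$, the set $S$ may be of constant size, so \cref{prop: edges} is useless and I would argue through the configuration model. Fix $S$ with $|S| = s$ in this range, put $t \coloneqq \delta d s/25$, and work in $\mathcal{C}_{n,d}$. By \cref{lem: StochDom} applied with $A = B = S$ (permissible since $2s \le 10\delta^2 n < (1-\delta)n$), the number of pairings internal to the expanded set of $S$ is stochastically dominated by $X \sim \mathrm{Bin}(ds, 2s/n)$; since $s \le 5\delta^2 n$ we have $t/\mathbb{E}[X] \ge 1/(250\delta) > 1$, so \cref{lem: betaChernoff} gives $\mathbb{P}[X \ge t] \le (250 e \delta)^t \le (700\delta)^t$, and directly $\mathbb{P}[X \ge t] \le \binom{ds}{t}(2s/n)^t \le (50 e s/(\delta n))^t$. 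As $G_{n,d}$ is $\mathcal{C}_{n,d}$ conditioned on being simple, \cref{prop: small subgraph} with $R$ empty upgrades this, at the cost of a factor $e^{3d^2}$, to a bound on $\mathbb{P}[e_{G_{n,d}}(S) \ge t]$, and summing over the $\binom{n}{s} \le (en/s)^s$ sets of size $s$ gives a bound of the form $e^{3d^2}\big[e(50e/\delta)^{\delta d/25}(s/n)^{\delta d/25 - 1}\big]^{s}$. Once $D$, and hence $\delta d/25$, is taken large enough in terms of $\delta$, the bracket is $<1$ throughout the range, the exponent $s \mapsto s\log(\mathrm{bracket})$ is convex in $s$ (so its maximum over $[\delta^2 d, 5\delta^2 n]$ is attained at an endpoint), and comparing the two endpoints shows the $s = \delta^2 d$ term dominates — and that term is $n$ raised to a large negative power, of order $-\delta^3 d^2$. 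Even after summing over the at most $n$ admissible values of $s$ and multiplying by the transfer factor $e^{3d^2} \le e^{3\log^4 n}$, the resulting quantity tends to $0$ as $n \to \infty$ for each fixed $\delta$, because the dominant negative term in its logarithm has order $\delta^3 d^2 \log n$ and outgrows both $3d^2$ and $\log n$.

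The step I expect to be the main obstacle is precisely this final union bound in regime \ref{item: spanset1}. When $|S|$ is of constant size there are about $n^{|S|}$ sets to account for — only polynomially many in $n$ — so the per-set tail bound has to be smaller than any fixed power of $1/n$; this is what forces $D$, equivalently the exponent $\delta d/25$, to be taken suitably large in terms of $\delta$, and it is why one must keep the genuinely $n$-decaying estimate $(50es/(\delta n))^t$ rather than the size-free $(700\delta)^t$. A secondary technical wrinkle is that passing from $\mathcal{C}_{n,d}$ to $G_{n,d}$ via \cref{prop: small subgraph} costs the factor $e^{3d^2}$, which is super-polynomial in $n$ when $d$ is close to $\log^2 n$; this is harmless only because in that regime the union-bounded probability is itself far below $e^{-3d^2}$.
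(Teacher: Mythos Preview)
Your argument is correct. For part \ref{item: spanset1} your approach matches the paper's: both pass to the configuration model, use \cref{lem: StochDom} to stochastically dominate $e_G(S)$ by a binomial, apply \cref{lem: betaChernoff} for the tail, and then union bound with the $e^{3d^2}$ transfer cost from \cref{prop: small subgraph}. (You retain the exponent $t=\delta d s/25$ throughout, whereas the paper simplifies its tail bound to $(|S|/en)^{2|S|}$ before taking the union; your version makes the final comparison with the factor $e^{3d^2}$ for $d$ near $\log^2 n$ more transparent.) For part \ref{item: spanset2} you take a genuinely different route: the paper simply reruns the configuration-model computation with the threshold $d|S|/25$ in place of $\delta d|S|/25$, while you observe that $|S|\ge 5\delta^2 n$ makes $z=|S|(|S|-1)$ large enough to invoke \cref{prop: edges} directly with $R=\varnothing$. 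This buys brevity and avoids redoing the tail estimate, at the cost of quoting a result whose own proof goes through the martingale machinery of \cref{lem: mart}; the paper's treatment is more self-contained and handles both ranges by the same elementary argument.
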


\begin{proof}
Let $1/D \ll \delta$.
For any $D\leq d\leq\log^2n$, let $G\sim\mathcal{C}_{n,d}$.
For each $S \subseteq [n]$ such that $\delta^2 d \leq |S| \leq 5\delta^2 n$ and any multigraph $F$ on $[n]$, let $g(S,F)$ be the event that $e_{F}(S)\leq \delta d |S|/25$.
It follows by \cref{lem: StochDom} that the variable $e_G(S)$ is stochastically dominated by $Y\sim\mathrm{Bin}(d|S|,5|S|/(4n))$\COMMENT{By \cref{lem: StochDom}, it is dominated by $\mathrm{Bin}(d|S|,|S|/(n-2|S|))$, which has a lower expectation than what we claim in this range of $|S|$.}.
We denote by $\hat{\mathbb{P}}$ the probability measure associated with the configuration model and let $\mathbb{P}$ be the measure associated with the space of (simple) $d$-regular graphs.
Therefore, by \cref{lem: betaChernoff} we have \COMMENT{
$\hat{\mathbb{P}}\left [e_G(S) \geq \delta d|S|/25 \right] = \hat{\mathbb{P}} \left [e_G(S) \geq (4\delta n/(125|S|)) 5d|S|^2/(4n) \right] \leq (125e|S|/(4\delta n))^ { d\delta |S|/25} < (|S|/en)^{2|S|}$.
To see the last inequality, note that 
\[\left(\left(\frac{125e|S|}{4\delta n}\right)^{ d\delta /50}\frac{en}{|S|}\right)^{2|S|} = \left(\left(\frac{125e|S|}{4\delta n}\right)^{ d\delta /50 -1}\frac{125e|S|}{4\delta n}\frac{en}{|S|}\right)^{2|S|} \leq \left(\frac{125e^2}{4\delta}(1000\delta e)^{\delta d/50 -1}\right)^{2|S|} < 1.\]}
\[\hat{\mathbb{P}}[\overline {g(S,G)}]\leq\hat{\mathbb{P}}\left [e_G(S) \geq \delta d|S|/25 \right] = \hat{\mathbb{P}} \left [e_G(S) \geq (4\delta n/(125|S|)) 5d|S|^2/(4n) \right]  < (|S|/en)^{2|S|}. \]
To see this last inequality, note that 
\begin{align*}
    \left(\left(\frac{125e|S|}{4\delta n}\right)^{ d\delta /50}\frac{en}{|S|}\right)^{2|S|} &= \left(\left(\frac{125e|S|}{4\delta n}\right)^{ d\delta /50 -1}\frac{125e|S|}{4\delta n}\frac{en}{|S|}\right)^{2|S|}\\
    &\leq \left(\frac{125e^2}{4\delta}(1000\delta e)^{\delta d/50 -1}\right)^{2|S|} < 1.
\end{align*}
It follows by \cref{prop: small subgraph} that \COMMENT{Where the last inequality can be seen to follow since $(i/en)^{i}$ is decreasing in $i$: Let $f(i) = (i/en)^i = e^{i\log i - i\log n - i}$. We have that $f'(i) =e^{i\log i - i\log n - i} (\log i - \log n)$ where the term in the brackets is negative on this range. Hence the function is decreasing.
It follows then that 
\[\mathbb{P}\Big[\bigvee_{\substack{S \subseteq [n]\\\delta^2 d \leq |S| \leq 5\delta^2 n }} \overline{g(S,G_{n,d})}\Big]\leq e^{3d^2}5\delta^2 n\left(\frac{\delta^2d}{en}\right)^{\delta^2 d}=o(1).\]}
\begin{align*}
\mathbb{P}\Big[\bigvee_{\substack{S \subseteq [n]\\\delta^2 d \leq |S| \leq 5\delta^2 n }} \overline{g(S,G_{n,d})}\Big]&= \hat{\mathbb{P}}\Big[\bigvee_{\substack{S \subseteq [n]\\\delta^2 d \leq |S| \leq 5\delta^2 n }} \overline{g(S,G)}\mid G \text{ is simple}\Big] \\ 
&\le e^{3d^2}\sum_{\substack{S \subseteq [n]\\\delta^2 d \leq |S| \leq 5\delta^2 n }} \hat{\mathbb{P}}[\overline {g(S,G)}] \\
&\le e^{3d^2}\sum_{i=\delta^2 d}^{5\delta^2n} \binom{n}{i} \left(\frac{i}{en}\right)^{2i}=o(1).
\end{align*}
Thus, property \ref{item: spanset1} in the statement holds with probability $1-o(1)$.
Similarly, we can show that property \ref{item: spanset2} also holds with probability $1-o(1)$.\COMMENT{To see that the second property holds, for each $S \subseteq [n]$ such that  $5\delta^2 n \leq |S| \leq n/100$ and any multigraph $F$ on $[n]$, let $f(S,F)$ be the event that  $e_{F}(S) \leq \frac{d|S|}{25} $.
Given such a set $S$, it follows by  \cref{lem: StochDom} that the random variable $e_G(S)$ is stochastically dominated by $Y\sim\mathrm{Bin}(d|S|,5|S|/(4n))$. 
Therefore, by \cref{lem: betaChernoff},
\[\hat{\mathbb{P}}\left [e_G(S) \geq d |S|/25 \right] = \hat{\mathbb{P}} \left [e_G(S) \geq (4n/(125|S|)) 5d|S|^2/(4n) \right]  \leq (125e|S|/ (4n))^ {d |S|/25} \leq (|S|/en)^{2|S|}. \]
To see the last inequality, note that 
\[\left(\left(\frac{125e|S|}{4n}\right)^{d/50}\frac{en}{|S|}\right)^{2|S|} = \left(\left(\frac{125e|S|}{4n}\right)^{d/50-1}\frac{125e|S|}{4n}\frac{en}{|S|}\right)^{2|S|} \leq \left(125e^2\left(\frac{125e}{400}\right)^{d/50-1}\right)^{2|S|} < 1.\]
It follows that 
\begin{align*}
\mathbb{P}\Big[\bigvee_{\substack{S \subseteq [n]\\ 5\delta^2 n \leq |S| \leq n/100 }} \overline{f(S,G_{n,d})}\Big] &= \hat{\mathbb{P}}\Big[\bigvee_{\substack{S \subseteq [n]\\ 5\delta^2 n \leq |S| \leq n/100 }} \overline{f(S,G)}\mid G \text{ is simple}\Big]\\
&\le e^{3d^2}\sum_{\substack{S \subseteq [n]\\ 5\delta^2 n \leq |S| \leq n/100 }} \hat{\mathbb{P}}[\overline {f(S,G)}] \\
&\le e^{3d^2}\sum_{i=5\delta^2 n}^{n/100} \binom{n}{i} \left(\frac{i}{en}\right)^{2i}=o(1),
\end{align*}
where the last inequality follows because $\binom{n}{i} (i/en)^{2i}\leq(en/i)^i (i/en)^{2i}=(i/en)^{i}$, which is decreasing in $i$, hence
\[\mathbb{P}\Big[\bigvee_{\substack{S \subseteq [n]\\ 5\delta^2 n \leq |S| \leq n/100 }} \overline{f(S,G_{n,d})}\Big]\leq e^{3d^2}\frac{n}{100}\left(\frac{5\delta^2}{e}\right)^{5\delta^2 n}=o(1).\]
Thus, property \ref{item: spanset2} in the statement holds with probability $1-o(1)$.}
\end{proof}

\begin{proposition}\label{prop: small exp}
For every $0<\delta<10^{-5}$ there exists $D \in \mathbb{N}$ such that for any $D< d \le \log^2 n$ we have that a.a.s.~the random graph $G=G_{n,d}$ satisfies the following properties.
\begin{enumerate}[label=(\roman*)]
    \item\label{item: small exp1} Let $R \subseteq G$ be a spanning subgraph with $\delta(R) > \delta d$.
    Then, for every $S \subseteq [n]$ with $|S| \le \delta^2 n$, we have $|N_R(S)| \ge 3|S|$.
    \item\label{prop: bigset} For every $S, S' \subseteq [n]$ with $\delta^2 n \le |S| \le |S'| \le 3|S| \le 3n/400$, we have $e_{G}(S,S') \le d|S|/5$.
\end{enumerate}
\end{proposition}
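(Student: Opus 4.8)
The plan is to derive both parts of the proposition as \emph{deterministic} consequences of \cref{lem: spanset}, so that the only probabilistic ingredient is the a.a.s.\ statement of that lemma. Concretely, I would fix $1/D\ll\delta$ as supplied by \cref{lem: spanset} and condition on $G=G_{n,d}$ satisfying its two conclusions \ref{item: spanset1} and \ref{item: spanset2}, which happens a.a.s.; the rest of the argument involves no randomness.

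For part~\ref{item: small exp1} I would argue by contradiction. Suppose $R\subseteq G$ has $\delta(R)>\delta d$ but some nonempty $S\subseteq[n]$ with $|S|\le\delta^2 n$ has $|N_R(S)|<3|S|$, and put $T\coloneqq S\cup N_R(S)$, so that $|T|<4|S|\le4\delta^2 n$. Every edge of $R$ incident to $S$ has both endpoints in $T$ (one endpoint in $S$, the other in $N_R(S)$), so counting degrees over $S$ and noting that each such edge is counted at most twice yields $e_R(T)\ge\tfrac12\sum_{v\in S}d_R(v)>\tfrac12\delta d|S|>\tfrac18\delta d|T|$. Since $R\subseteq G$, this forces $e_G(T)>\tfrac18\delta d|T|>\tfrac1{25}\delta d|T|$, which will contradict \cref{lem: spanset}\ref{item: spanset1} once I check that $|T|$ lies in the range $[\delta^2 d,\,5\delta^2 n]$ to which that part applies. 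The upper bound is immediate from $|T|<4\delta^2 n$; for the lower bound I would simply pick any $v\in S$, observe $N_R(v)\cup\{v\}\subseteq T$, and use $|N_R(v)|=d_R(v)>\delta d\ge\delta^2 d$ (as $\delta<1$) to conclude $|T|>\delta^2 d$.

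The only point that requires genuine attention is this last one: a priori $T=S\cup N_R(S)$ could be tiny, in which case it would fall below the window of \cref{lem: spanset}\ref{item: spanset1} and no contradiction would be available. The resolution above shows that the minimum-degree hypothesis $\delta(R)>\delta d$ by itself already forces $|T|$ to be large enough, so this degenerate regime never arises. Everything else in part~\ref{item: small exp1} is routine bookkeeping with the constants.

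For part~\ref{prop: bigset} I would set $U\coloneqq S\cup S'$ and note that every edge contributing to $e_G(S,S')$ has both endpoints in $U$, so $e_G(S,S')\le e_G(U)$, while $\delta^2 n\le|S|\le|U|\le|S|+|S'|\le4|S|\le n/100$. I would then split on the size of $U$: if $|U|\le5\delta^2 n$, then $|U|\ge\delta^2 n\ge\delta^2 d$ puts us in the range of \cref{lem: spanset}\ref{item: spanset1}, giving $e_G(U)\le\delta d|U|/25$; otherwise $5\delta^2 n\le|U|\le n/100$ and \cref{lem: spanset}\ref{item: spanset2} gives $e_G(U)\le d|U|/25$. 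In either case $e_G(S,S')\le e_G(U)\le d|U|/25\le 4d|S|/25\le d|S|/5$, using $|U|\le4|S|$ and $\delta<1$. There is no further obstacle here beyond matching the two size windows of \cref{lem: spanset}.
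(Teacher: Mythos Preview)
Your proof is correct and follows essentially the same approach as the paper: both parts are derived as deterministic consequences of \cref{lem: spanset} after conditioning on its a.a.s.\ conclusion. For part~\ref{item: small exp1} the paper splits into a trivial case $|S|<\delta^2 d$ (handled directly via $\delta(R)>\delta d$) and a main case $\delta^2 d\le|S|\le\delta^2 n$, whereas you absorb the small-$|S|$ regime by noting $|T|\ge |N_R(v)|+1>\delta d\ge\delta^2 d$ for any $v\in S$, which is a slight streamlining; part~\ref{prop: bigset} is argued identically in both.
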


\begin{proof}
Let $1/D \ll \delta $ and condition on the statement of \cref{lem: spanset} holding, which occurs a.a.s.
We first prove \ref{item: small exp1}.
For each $S \subseteq [n]$ such that $|S| < \delta^2 d$, the fact that every vertex has degree at least $\delta d$ ensures that $|N_{R}(S)|\geq \delta d>3\delta^2d$.
Now let $S\subseteq [n]$ with $\delta^2d\leq|S|\leq \delta^2 n$.
Suppose $|N_{R}(S)|< 3|S|$.
Let $Y\subseteq [n]$ be such that $|Y| = 3|S|$ and $N_{R}(S)\subseteq Y$.
We have by \cref{lem: spanset}\ref{item: spanset1} that 
\[4|S|\delta d/25 \ge e_G(S \cup Y) \ge e_R(S \cup Y) \ge e_R(S, Y)\ge |S|\delta d - e_{G}(S) > |S| \delta d/2,\]
a contradiction. 
The result follows.

In order to prove \ref{prop: bigset}, let $S\subseteq [n]$ with $\delta^2 n\le |S|\leq  n/400$.
Suppose there exists $S'\subseteq [n]$ with $|S| \le|S'| \le 3|S| $ and such that $e_{G}(S,S') > d|S|/5$.
We have by \cref{lem: spanset}\COMMENT{If $|S|<5\delta^2n$, use \ref{item: spanset1}, which gives a better bound; otherwise, use \ref{item: spanset2}.} that 
\[4|S| d/25  \ge e_{G}(S \cup S') \ge e_{G}(S, S') > d|S|/5,\]
a contradiction. 
The result follows.
\end{proof}

\begin{proposition}\label{prop: expexist}
For every $0 <\delta < 10^{-5}$ there exists $D\in \mathbb{N}$ such that for any $D < d \le \log^2 n$ we have that a.a.s.~the random graph $G=G_{n,d}$ has the following property.
Let $H \in \mathcal{H}_{1/2}(G)$ and let $G'\coloneqq G \setminus H$.
Then, there exists a spanning graph $R \subseteq G'$ such that $\Delta(R) < \delta d$ and, for every $S \subseteq [n]$ with $|S| \le n/400$, we have that $|N_R(S)| \ge 3|S|$.
\end{proposition}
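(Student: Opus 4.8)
The plan is to produce $R$ as a random sparse subgraph of $G'$ and to verify its two required features — small maximum degree, and $3$-expansion for every set of size at most $n/400$ — by three separate means: the Lov\'asz local lemma for the degree bound, a first-moment union bound for expansion of linearly large sets, and \cref{prop: small exp} (applied with a smaller parameter) for expansion of small sets. Concretely, set $\delta' \coloneqq \delta/100$ (so $0 < \delta' < 10^{-5}$) and $q \coloneqq \delta/4$, take $1/D \ll \delta$, and condition on $G = G_{n,d}$ satisfying the conclusions of \cref{lem: spanset} and \cref{prop: small exp}, both with parameter $\delta'$; this holds a.a.s. Now fix any $H \in \mathcal{H}_{1/2}(G)$ and set $G' \coloneqq G \setminus H$. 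The point is that $\delta(G') \ge d/2$, whereas the upper bounds on $e_G(S)$ and $e_G(S,S')$ furnished by \cref{lem: spanset} and \cref{prop: small exp}\ref{prop: bigset} concern $G$, hence also its subgraph $G'$, and so remain available for every $H$. Let $R$ be obtained from $G'$ by keeping each edge independently with probability $q$.

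The first step is to control the degrees of $R$. Since $d/2 \le d_{G'}(v) \le d$, one has $\mathbb{E}[d_R(v)] = q\,d_{G'}(v) \in [\delta d/8, \delta d/4]$, so \cref{lem: betaChernoff,lem: Chernoff} bound the probability $\hat p$ of the event $A_v \coloneqq \{d_R(v) \le \delta' d\} \cup \{d_R(v) \ge \delta d\}$ by a quantity tending to $0$ as $d \to \infty$. As $A_v$ is determined by the independent indicators of the edges of $G'$ at $v$, the graph $G'$ is a dependency graph for $\{A_v\}_{v \in [n]}$, so its maximum degree is at most $d$; hence for $D$ large $e\hat p(d+1)\le 1$, and \cref{lem: LLL} gives $\mathbb{P}[\,\bigcap_v \overline{A_v}\,] \ge (1-e\hat p)^n$. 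Taking $D$ large enough that moreover $e\hat p \le (\delta')^2/16$, the event $\mathcal{D} \coloneqq \bigcap_v \{\delta' d < d_R(v) < \delta d\}$ satisfies $\mathbb{P}[\mathcal{D}] \ge e^{-(\delta')^2 n/4}$.

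Next I would treat expansion of linearly large sets by a union bound. Fix $S$ with $(\delta')^2 n \le |S| \le n/400$ and $Y$ with $|Y| = 3|S|$. Decomposing $\sum_{v \in S} d_{G'}(v)$ into edges inside $S$, edges from $S$ to $Y \setminus S$, and edges from $S$ to $[n]\setminus(Y\cup S)$, and using $d_{G'}(v) \ge d/2$, $e_{G'}(S) \le e_G(S) \le d|S|/25$ (\cref{lem: spanset}) and $e_{G'}(S, Y\setminus S) \le e_G(S, Y\setminus S) \le d|S|/5$ (\cref{prop: small exp}\ref{prop: bigset}, after enlarging $Y\setminus S$ to a set of size in $[|S|,3|S|]$ if necessary), one obtains $e_{G'}(S, [n]\setminus Y) \ge \frac{11}{50}d|S|$. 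Now $\{N_R(S)\subseteq Y\}$ is precisely the event that none of these $\ge \frac{11}{50}d|S|$ edges is kept in $R$, so it has probability at most $(1-q)^{11d|S|/50}$. Summing over the $\le \binom{n}{|S|}\binom{n}{3|S|} \le 4^n$ pairs $(S,Y)$ with a fixed $|S|$, and over the at most $n$ admissible sizes $|S| \ge (\delta')^2 n$, the probability that some such $S$ has $|N_R(S)| < 3|S|$ is at most $n\,4^n(1-q)^{11d(\delta')^2 n/50}$, which is smaller than $e^{-(\delta')^2 n/2}$ as soon as $qd(\delta')^2$ exceeds a fixed absolute constant, that is, for $D$ polynomial in $\delta^{-1}$.

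It remains to combine. Since $\mathbb{P}[\mathcal{D}] \ge e^{-(\delta')^2 n/4}$ strictly exceeds $e^{-(\delta')^2 n/2}$, there is an outcome lying in $\mathcal{D}$ for which, in addition, $|N_R(S)| \ge 3|S|$ for every $S$ with $(\delta')^2 n \le |S| \le n/400$. For such an $R$, then $\Delta(R) < \delta d$; and as $\delta(R) > \delta' d$, \cref{prop: small exp}\ref{item: small exp1} with parameter $\delta'$ gives $|N_R(S)| \ge 3|S|$ for all $S$ with $|S| \le (\delta')^2 n$ as well, so the expansion bound holds for all $|S| \le n/400$, completing the proof. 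I expect the degree control to be the main obstacle: since $d$ may be a large constant, a union bound over the $n$ vertices is impossible, so one is forced through the local lemma, which only yields a suitable $R$ with exponentially small — though positive — probability; the large-set expansion estimate must therefore be pushed so that each atomic bad event has probability only $(1-q)^{\Omega(dn)}$, which is what makes the two estimates compatible. A secondary subtlety is that a lower bound on $\delta(R)$ is of no use for linearly large $S$ (there $\delta' d|S|$ is far below the $\Theta(d|S|)$ edge counts in play), so for those sets one genuinely needs that $\Omega(d|S|)$ edges leave $S$ in $G'$ and that $R$ retains at least one of them to every vertex outside the prescribed set $Y$.
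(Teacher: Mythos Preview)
Your proposal is correct and follows essentially the same approach as the paper: take a random sparse subgraph of $G'$, use the local lemma (with $G'$ as dependency graph) to guarantee an instance with degrees in the right range with probability at least $e^{-cn}$, use a union bound over pairs $(S,Y)$ to show that large-set expansion fails with probability strictly smaller than this, and then invoke \cref{prop: small exp}\ref{item: small exp1} for small sets via the minimum-degree lower bound. The only differences from the paper's proof are in the choice of constants (the paper uses $\hat\delta=\delta/8$ and edge-retention probability $4\hat\delta$, versus your $\delta'=\delta/100$ and $q=\delta/4$) and in minor bookkeeping.
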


\begin{proof}
Let $1/D \ll \delta$ and let $\hat{\delta} \coloneqq \delta/8$.
Condition on the event that the statements of \cref{lem: spanset,prop: small exp} hold with $\hat{\delta}$ playing the role of $\delta$, which happens a.a.s.
Suppose $G$ satisfies these events and $H \in \mathcal{H}_{1/2}(G)$, and let $G'\coloneqq G \setminus H$.
We now construct a suitable $R$ for this $G'$.
Consider a random subgraph $R$ of $G'$ where each edge is chosen independently and uniformly at random with probability $4\hat{\delta}$.
Consider the following events.
\begin{enumerate}[label=($\mathcal{G}$\arabic*)]
    \item\label{item: local} For all $v \in [n]$ we have $ \hat{\delta} d < d_R(v) < 8\hat{\delta} d$.
    \item\label{item: small edges} For every $S \subseteq [n]$ with $|S| \le n/400$, we have $|N_{R}(S)| \ge 3|S|$.
\end{enumerate}
Note that, if both \ref{item: local} and \ref{item: small edges} hold, then $R$ is a subgraph which satisfies the properties in the statement of the lemma.

For each $v \in [n]$, let $\mathcal{A}_v$ be the event that $d_R(v) \notin (\hat{\delta} d, 8 \hat{\delta} d)$.
By \cref{lem: Chernoff}, we have\COMMENT{
We have $2\hat{\delta} d \le \mathbb{E}[d_R(v)] \le 4\hat{\delta} d$ for all $v\in[n]$.
Therefore, by \cref{lem: Chernoff} we have that
\[\mathbb{P}[d_R(v)\leq\hat{\delta}d]\leq\mathbb{P}[d_R(v)\leq\mathbb{E}[d_R(v)]/2]\leq2e^{-\mathbb{E}[d_R(v)]/12}\leq2e^{-\hat{\delta} d/6}\]
and
\[\mathbb{P}[d_R(v)\geq8\hat{\delta}d]\leq\mathbb{P}[d_R(v)\geq6\hat{\delta}d]\leq\mathbb{P}[d_R(v)\geq3\mathbb{E}[d_R(v)]/2]\leq2e^{-\mathbb{E}[d_R(v)]/12}\leq2e^{-\hat{\delta} d/6}.\]
The bound follows by combining these two.} 
\[\mathbb{P}[\mathcal{A}_v] < 4e^{-\hat{\delta} d/6}\]
for all $v\in[n]$.
Observe that $G'$ is itself a dependency graph for $\{\mathcal{A}_v\}_{v\in[n]}$, and it has degree at most $d$.
By \cref{lem: LLL}, it follows that\COMMENT{By \cref{lem: LLL} we have that 
\[\mathbb{P}\Big[\bigwedge_{v \in [n]} \overline{\mathcal{A}_v}\Big] \ge (1-4ee^{-\hat{\delta} d/6})^n\ge (1-12e^{-\hat{\delta} d/6})^n.\]
In order to obtain the final bound observe that $12e^{-\hat{\delta} d/6}<1/2$ since $1/d\ll\delta$.}
\[\mathbb{P}[R\text{ satisfies \ref{item: local}}]=\mathbb{P}\Big[\bigwedge_{v \in [n]} \overline{\mathcal{A}_v}\Big] \ge (1-12e^{-\hat{\delta} d/6})^n \ge 2^{- n}.\]

Next, for $S, S' \subseteq [n]$, let $g(S, S')$ be the event that $N_R(S) \subseteq S'$.
Let $(\mathcal{G}3)$ be the event that for no pair of subsets $S, S' \subseteq [n]$ with $S' \subseteq N_{G'}(S)$ and $\hat{\delta}^2 n \le |S| \le |S'|\le 3|S| \le 3n/400$ the event $g(S,S')$ occurs.
We have by \cref{prop: small exp}\ref{prop: bigset} and \cref{lem: spanset} that
\[e_{G'}(S, [n] \setminus S')\geq d|S|/2 - e_{G'}(S, S')- e_{G'}(S) \ge d|S|/2 - d|S|/5 - d|S|/25 \geq d|S|/5.\] 
Therefore, we have
\[\mathbb{P}[g(S, S')] \le (1-4\hat{\delta})^{d |S|/5} \le e^{-4\hat{\delta}d|S|/5}\leq 2^{-4n}.\]
A union bound implies that $\mathbb{P}[R\text{ fails to satisfy ($\mathcal{G}$3)}] \le 2^{2n} 2^{-4 n} < 2^{-n}$.
Therefore, there exists an instance of $R$ which satisfies both \ref{item: local} and $(\mathcal{G}3)$ simultaneously.
Furthermore, since $R$ satisfies \ref{item: local}, it follows by \cref{prop: small exp}\ref{item: small exp1} that for every $S \subseteq [n]$ with $|S| \le \hat{\delta}^2 n$ we have that $|N_R(S)| \ge 3|S|$. 
Combining this with $(\mathcal{G}3)$ we see that $R$ also satisfies \ref{item: small edges}.
Thus, $R$ is a subgraph of the desired form.
\end{proof}

\begin{proposition}\label{prop: connexp}
For every $\eps >0$ there exists $D >0$ such that for any $D< d \le \log^2 n$ we have that a.a.s.~the random graph $G=G_{n,d}$ has the following property.
Let $H \in \mathcal{H}_{1/2 - \eps}(G)$ and let $G'\coloneqq G \setminus H$.
Let $R \subseteq G'$ be a spanning graph such that, for every $S \subseteq [n]$ with $|S| \le n/400$, we have $|N_R(S)| \ge 3|S|$.
Then, there exists a spanning $3$-expander $R' \subseteq G'$ with $e(R') \leq e(R) + 400$. 
\end{proposition}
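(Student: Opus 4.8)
The plan is to exploit that $R$ already satisfies the expansion requirement in the definition of a $3$-expander and that adding edges to a graph can never shrink any neighbourhood $N(\cdot)$; hence the \emph{only} possible defect of $R$ is that it may be disconnected, and it will be enough to add at most $400$ edges of $G'$ to $R$ so as to join it up.

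The first step is to bound the number of components of $R$. If $C$ is a connected component of $R$ with $0<|C|\le n/400$, then no edge of $R$ leaves $C$, so $N_R(C)\subseteq C$ and the expansion hypothesis forces $3|C|\le|N_R(C)|\le|C|$, which is absurd. Thus every component of $R$ has more than $n/400$ vertices, and so $R$ has at most $399$ components $C_1,\dots,C_k$. I would then form the auxiliary graph $\mathcal R$ on $[k]$ in which $ij$ is an edge whenever $E_{G'}(C_i,C_j)\neq\emptyset$, argue that $\mathcal R$ is connected, fix a spanning tree of $\mathcal R$, and for each of its at most $k-1\le 398$ edges $ij$ add to $R$ a single edge of $G'$ between $C_i$ and $C_j$. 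The graph $R'$ so obtained is spanning, is contained in $G'$, is connected, satisfies $|N_{R'}(S)|\ge|N_R(S)|\ge 3|S|$ for all $|S|\le n/400$, and has $e(R')\le e(R)+398\le e(R)+400$; hence $R'$ is the $3$-expander we want.

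The crux, and the step I expect to be the main obstacle, is showing that $\mathcal R$ is connected, i.e.\ that a.a.s.\ $G=G_{n,d}$ has the property that for every $H\in\mathcal{H}_{1/2-\eps}(G)$ and every partition $[n]=A\cup B$ with $|A|,|B|\ge n/400$ there is an edge of $G'=G\setminus H$ between $A$ and $B$ (which we apply with $A=\bigcup_{i\in I}C_i$ over all proper nonempty $I\subseteq[k]$; every such $A$ and its complement indeed have more than $n/400$ vertices). To establish this I would first prove the purely $G$-theoretic statement that a.a.s.\ $e_G(A,B)\ge(1-\eps_0)|A||B|d/n$ for all such partitions, where $\eps_0\coloneqq\min\{\eps,10^{-3}\}$: for a fixed partition, apply \cref{prop: edges} with the empty graph in the role of its $R$, with $A$, and with $Z_a\coloneqq\{\{a,b\}:b\in B\}$ for each $a\in A$ — so that $\sum_{a\in A}|Z_a\cap E(G)|=e_G(A,B)$ and $z=|A||B|\ge n^2/500>\eps_0 n^2$ — obtaining the claimed bound with probability at least $1-e^{-(\eps_0/10)^4nd}$, and then take a union bound over the at most $2^n$ partitions, which succeeds provided $D$ is chosen large enough (depending only on $\eps$) that $(\eps_0/10)^4d>\log 2$ for all $d>D$.

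It then remains to pass from $G$ to $G'$. Since every edge of $E_H(A,B)$ has an endpoint in $A$ (and one in $B$), we have $e_H(A,B)\le\min\{|A|,|B|\}(1/2-\eps)d$; assuming without loss of generality $|A|\le|B|$, so that $|B|\ge n/2$ and $|A||B|/n\ge|A|/2$, we get
\[
e_{G'}(A,B)\ge e_G(A,B)-e_H(A,B)\ge(1-\eps_0)\frac{|A||B|d}{n}-|A|\Bigl(\frac12-\eps\Bigr)d\ge|A|d\Bigl(\eps-\frac{\eps_0}{2}\Bigr)\ge\frac{\eps}{2}|A|d>0.
\]
This is exactly the place where taking $d$ large is needed (to afford the union bound over all $2^n$ cuts) and where the minimum-degree threshold $(1/2-\eps)d$, rather than $d/2$, is essential, since it leaves a genuine surplus of cross edges across every nearly balanced cut after deleting $H$.
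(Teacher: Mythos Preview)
Your proof is correct and follows essentially the same approach as the paper: bound the number of components of $R$ via its expansion hypothesis, invoke \cref{prop: edges} together with $\Delta(H)\le(1/2-\eps)d$ to guarantee that $G'$ has an edge across every cut with both sides of size at least $n/400$, and then connect the components of $R$ using at most $400$ such edges. The only minor variation is that the paper first establishes the slightly stronger intermediate fact that $G'$ itself is connected (via a pigeonhole step locating a single vertex with more than $(1/2-\eps)d$ neighbours across a putative bad cut), whereas your global estimate $e_{G'}(A,B)\ge \eps|A|d/2$ is a touch more direct; the two arguments are interchangeable.
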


\begin{proof}
Let $1/D \ll \eps$.
We are first going to prove that a.a.s.~$G'$ is connected.
Note that a.a.s., for any $A,B \subseteq [n]$ with $|A|=n/400$ and $|B|= (1/2-\eps/10)n$, we have $\sum_{a \in A}e_G(a,B) > (1/2-\eps/5)|A|d$.
Indeed, this follows by an application of \cref{prop: edges} with $R\coloneqq\varnothing$ and $Z_a$ being the star with centre $a$ whose leaves are all the vertices in $B\setminus\{a\}$.\COMMENT{Indeed, for each $a \in A$, let $Z_{a}$ be a star with centre $a$ and having every vertex in $B\setminus\{a\}$ as a leaf, and let $R$ be the empty graph.
Then, $z=\sum_{a \in A} |Z_a|=(|B|\pm1)|A|= (1/2-\eps/10)|A|n\pm|A|$, and hence, $|A|\leq z/((1/2-\eps/9)n)$.
By applying \cref{prop: edges} we get that 
\begin{align*}
    \mathbb{P} \left[\sum_{a\in A} e_G(a,B)\leq\left(\frac12-\frac\eps5\right)d|A|\right]&\leq\mathbb{P}\left[\sum_{a\in A} e_G(a,B)\leq\frac{1/2-\eps/5}{1/2-\eps/9}\frac{zd}{n} \right] \leq \mathbb{P}\left[\sum_{a\in A} e_G(a,B)\leq\left(1-\frac\eps6\right)\frac{zd}{n} \right]\\
    &\leq\mathbb{P}\left[\left|\sum_{a\in A} e_G(a,B)-\frac{zd}{n}\right|\geq\frac\eps6\frac{zd}{n} \right]\leq e^{-(\eps/60)^4dn}\ll 2^{-2n}.
\end{align*}
The second inequality follows since 
\[\frac{1/2-\eps/5}{1/2-\eps/9}=1-\frac{\eps/5-\eps/9}{1/2-\eps/9}\leq1-\frac{8\eps}{45}\leq1-\frac\eps6.\]
Finally, the claim in the proof follows by a union bound.}
We now claim that for any $A\subseteq [n]$ with $|A|\ge n/400$ we have that 
\begin{equation}\label{eq: expansionG'}
    |N_{G'}(A)| \ge (1/2 + \eps/10)n.
\end{equation}
To see this, note that if there exists $A\subseteq [n]$ with $|A|\ge n/400$ and  $|N_{G'}(A)| < (1/2 + \eps/10)n$ then we may take subsets $A'\subseteq A$ with $|A'| = n/400$ and $B\subseteq [n]$ with $|B|= (1/2- \eps/10)n$ such that $e_{G'}(A',B) = 0$.
However, we have already noted that for such $A'$ and $B$ we have that $\sum_{a \in A'}e_G(a,B) \ge (1/2-\eps/5)|A'|d$.
It follows that there exists $a \in A'$ with $e_G(a,B)> (1/2 - \eps/5)d$ and therefore $e_{G'}(a, B) >0$.
Thus, no such $A$ and $B$ exist.

In particular, \eqref{eq: expansionG'} implies that $G'$ is connected.
Indeed, assume that $G'$ is not connected and let $A\varsubsetneq[n]$ be a (connected) component of size $|A|\leq n/2$.
We must have that $|N_{G'}(A)|\leq|A|$, but \eqref{eq: expansionG'} and the statement hypotheses imply that $|N_{G'}(A)|>|A|$, a contradiction.

Finally, note that $R$ consists of at most $400$ components, since each connected component has order at least $n/400$.
Since $G'$ is connected, we may choose a set $E \subseteq E(G')$ with $|E| \le 400$ such that the graph $R' \coloneqq([n], E(R) \cup E)$ is connected, and thus is a spanning $3$-expander. 
\end{proof}

\begin{lemma}\label{thm: expmain}
For every $\eps >0$ and $0\leq\delta\leq10^{-5}$ there exists $D>0$ such that for any $D < d \le \log^2 n$ we have that a.a.s.~the random graph $G=G_{n,d}$ has the following property.
Let $H \in \mathcal{H}_{1/2-\eps}(G)$ and let $G'\coloneqq G \setminus H$.
Then, there exists a spanning $3$-expander $R \subseteq G'$ with $\Delta(R) < \delta d$.
\end{lemma}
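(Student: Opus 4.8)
The plan is to chain together the propositions from this section in the obvious way. Given $\eps>0$ and $0\leq\delta\leq 10^{-5}$, first set an auxiliary parameter $\hat\delta \coloneqq \min\{\delta,10^{-6}\}$ (say), so that $0<\hat\delta<10^{-5}$, and choose $D$ large enough that $1/D\ll\hat\delta,\eps$; this lets us invoke all of \cref{prop: expexist,prop: connexp} with $\hat\delta$ in place of $\delta$. Condition on the (a.a.s.) event that $G=G_{n,d}$ simultaneously satisfies the conclusions of \cref{prop: expexist} (with parameter $\hat\delta$) and \cref{prop: connexp} (with parameter $\eps$); a union bound over two a.a.s.\ events is still a.a.s.

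Now suppose $H\in\mathcal{H}_{1/2-\eps}(G)$ and let $G'\coloneqq G\setminus H$. Since $H\in\mathcal{H}_{1/2-\eps}(G)\subseteq\mathcal{H}_{1/2}(G)$, \cref{prop: expexist} applies and yields a spanning subgraph $R_0\subseteq G'$ with $\Delta(R_0)<\hat\delta d$ and $|N_{R_0}(S)|\geq 3|S|$ for every $S\subseteq[n]$ with $|S|\leq n/400$. This $R_0$ is exactly of the form required as input to \cref{prop: connexp}, so that proposition (applied with parameter $\eps$) produces a spanning $3$-expander $R\subseteq G'$ with $e(R)\leq e(R_0)+400$. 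It remains to check that $R$ has small maximum degree: the $400$ extra edges added when passing from $R_0$ to $R$ raise the degree of any single vertex by at most $400$, so $\Delta(R)\leq\Delta(R_0)+400<\hat\delta d+400$. Taking $D$ large enough that $400<(\delta-\hat\delta)d$ for all $d>D$ — concretely, since $\hat\delta\leq\delta/2$ whenever $\delta>0$, it suffices that $\hat\delta d\geq 400$, i.e.\ $D\geq 800/\delta$ — we get $\Delta(R)<\delta d$ as required. (If $\delta=0$ the statement is vacuous in the sense that it forces $R$ to be edgeless, but then one cannot have a spanning $3$-expander for $n\geq 2$, so implicitly $\delta>0$; alternatively the hierarchy convention $1/D\ll\delta$ already rules out $\delta=0$.)

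The only mild subtlety, and the one place to be slightly careful, is the bookkeeping on the degree increase: \cref{prop: connexp} is stated only in terms of $e(R')\leq e(R)+400$, not in terms of $\Delta$, so one should note explicitly that adding a set $E$ of at most $400$ edges increases each vertex degree by at most $|E|\leq 400$, and then absorb this additive $400$ into the gap between $\hat\delta d$ and $\delta d$ by the choice of $D$. There is no genuine obstacle here — this lemma is purely an assembly of \cref{prop: expexist} and \cref{prop: connexp}, which do all the real work (the edge-switching martingale bound of \cref{lem: mart}, the edge-distribution estimate of \cref{prop: edges}, the small-set expansion of \cref{prop: small exp}, and the Lovász Local Lemma argument in \cref{prop: expexist}). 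So I would expect the proof to be three or four lines long.

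\begin{proof}
Let $1/D\ll\delta,\eps$, and if $\delta>0$ set $\hat\delta\coloneqq\min\{\delta/2,10^{-6}\}$, so that $0<\hat\delta<10^{-5}$ and $\hat\delta d\geq 400$ for all $d>D$ (if $\delta=0$ there is nothing to prove, as the hierarchy convention precludes it). Condition on the a.a.s.\ event that $G=G_{n,d}$ satisfies the conclusions of both \cref{prop: expexist} with $\hat\delta$ playing the role of $\delta$ and \cref{prop: connexp} with parameter $\eps$. Let $H\in\mathcal{H}_{1/2-\eps}(G)\subseteq\mathcal{H}_{1/2}(G)$ and $G'\coloneqq G\setminus H$. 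By \cref{prop: expexist}, there is a spanning $R_0\subseteq G'$ with $\Delta(R_0)<\hat\delta d$ and $|N_{R_0}(S)|\geq 3|S|$ for all $S\subseteq[n]$ with $|S|\leq n/400$. Applying \cref{prop: connexp} to $R_0$ yields a spanning $3$-expander $R\subseteq G'$ with $e(R)\leq e(R_0)+400$; inspecting its proof, $R$ is obtained from $R_0$ by adding a set $E$ of at most $400$ edges, so $\Delta(R)\leq\Delta(R_0)+|E|<\hat\delta d+400\leq 2\hat\delta d\leq\delta d$. Thus $R$ is a spanning $3$-expander with $\Delta(R)<\delta d$, as required.
\end{proof}
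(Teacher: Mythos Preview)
Your proof is correct and follows essentially the same approach as the paper: apply \cref{prop: expexist} with a halved $\delta$-parameter to get a sparse expanding subgraph, then \cref{prop: connexp} to make it connected (hence a $3$-expander), absorbing the at most $400$ extra edges into the degree bound via the choice of $D$. The paper writes $\delta/2$ where you write $\hat\delta=\min\{\delta/2,10^{-6}\}$ and is slightly less explicit about the degree increase, but the argument is the same.
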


\begin{proof}
Let $1/D \ll \delta, \eps$ and condition on the statements of \cref{prop: expexist,prop: connexp} both holding with $\delta/2$ playing the role of $\delta$, which happens a.a.s.
By \cref{prop: expexist} we may find a spanning subgraph $R' \subseteq G'$ with $\Delta(R') < \delta d/2$ and such that, for all $S \subseteq [n]$ with $|S| \le n/400$, we have $|N_{R'}(S)| \ge 3|S|$.
Then, by \cref{prop: connexp} we may find a spanning $3$-expander $R \subseteq G'$ with $\Delta(R) < \Delta(R') +400 < \delta d$.
\end{proof}


\section{Finding many boosters}\label{section: proof}

The following proposition provides an upper bound on the expected number of `thin' subgraphs that $G_{n,d}$ contains.

\begin{proposition}\label{prop: small subgraphs}
Let $1/n\ll1/d, \delta \ll 1$, where $n,d \in \mathbb{N}$, and let $G = G_{n,d}$.
Let $\mathcal{R}$ be a family of graphs on vertex set $[n]$ with $e(R) \leq \delta d n$ for all $R \in \mathcal{R}$.
Then, \[\sum_{R \in \mathcal{R}}\mathbb{P}[R \subseteq G] \le e^{2\delta d n \log(1/\delta)}.\]
\end{proposition}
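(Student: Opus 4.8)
The plan is to bound $\sum_{R\in\cR}\bPr[R\subseteq G]$ by passing to the configuration model, where the probability that a fixed sparse graph $R$ appears can be controlled edge-by-edge, and then to bound the number of graphs with a given (small) number of edges by a crude counting argument. Fix $R\in\cR$ with $e(R)=m\le \delta dn$. Writing $\mathbf{d}$ for the degree sequence of $K_n$ minus $R$ (i.e.\ $d-d_R(i)$ in each coordinate), \cref{prop: small subgraph} applied with the empty graph (or with $R$ itself) tells us that conditioning on simplicity costs at most a factor $e^{3d^2}$, so it suffices to work with $G\sim\cC_{n,d}$ and bound $\hat\bPr[R\subseteq G]$. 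In the point-exposure process for a configuration, each edge $ij$ of $R$ must be realised by pairing a point of $i$'s expanded set with a point of $j$'s; when we expose the relevant points one at a time there are at least $dn-2m\ge dn/2$ free points and at most $d$ of them lie in the target expanded set, so each edge of $R$ is created with probability at most $2d/(dn)=2/n$ (up to lower-order corrections from points already used; since $m\le\delta dn\ll n^2$ this only changes the constant). Hence $\hat\bPr[R\subseteq G]\le (Cd/(dn))^{m}=(C/n)^m$ for an absolute constant $C$, and so $\bPr[R\subseteq G]\le e^{3d^2}(C/n)^m$.

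Next I would organise the sum by the number of edges. For each $m\le\delta dn$, the number of labelled graphs on $[n]$ with exactly $m$ edges is at most $\binom{\binom n2}{m}\le (n^2/2)^m/m!\le (en^2/(2m))^m$. Combining with the per-graph bound,
\[
\sum_{R\in\cR}\bPr[R\subseteq G]\;\le\;e^{3d^2}\sum_{m=0}^{\delta dn}\binom{\binom n2}{m}\Big(\frac{C}{n}\Big)^m\;\le\;e^{3d^2}\sum_{m=0}^{\delta dn}\Big(\frac{eCn}{2m}\Big)^m.
\]
The summand $(eCn/(2m))^m$ is increasing in $m$ on the relevant range (its logarithmic derivative is $\log(eCn/(2m))-1=\log(Cn/(2m))\ge 0$ as long as $m\le Cn/2$, which holds since $m\le\delta dn\le\delta\log^2 n\cdot n\le Cn/2$ for $n$ large), so the whole sum is at most $(\delta dn+1)$ times its last term, i.e.\ at most $(\delta dn+1)\big(eC/(2\delta d)\big)^{\delta dn}$. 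Taking logarithms, this is $3d^2+\log(\delta dn+1)+\delta dn\log(eC/(2\delta d))$. Since $1/n\ll 1/d,\delta$ we have $3d^2=o(\delta dn)$ and $\log(\delta dn+1)=o(\delta dn)$; and $\log(eC/(2\delta d))\le\log(1/\delta)+O(1)\le 2\log(1/\delta)$ once $\delta$ is small enough (absorbing the constant, again using $1/d,\delta\ll 1$). Thus the total is at most $e^{2\delta dn\log(1/\delta)}$, as claimed.

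The main obstacle, and the only place requiring care, is the per-graph probability bound in the configuration model: one must check that even though $R$ has up to $\delta dn$ edges, exposing the $2m\le 2\delta dn$ relevant points never depletes more than a negligible fraction of the $\approx dn$ points, so the "$2/n$ per edge" heuristic survives with a clean absolute constant; this is where \cref{lem: StochDom}-type reasoning (or a direct product bound over the $m$ edges of $R$ in a convenient exposure order) is invoked, together with \cref{prop: small subgraph} to return from $\cC_{n,d}$ to $\cG_{n,d}$. Everything after that is the routine optimisation of $\binom{n^2}{m}(C/n)^m$ over $m$ sketched above, where the hypothesis $1/n\ll 1/d,\delta$ is exactly what makes the $e^{3d^2}$ correction and the $\log(\delta dn)$ term disappear into the exponent.
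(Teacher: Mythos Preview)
Your per-edge probability bound is incorrect. You claim that in the configuration model each edge $ij$ of $R$ is present with probability at most $2/n$, but the correct order is $d/n$: there are up to $d^2$ point-pairs between the expanded sets of $i$ and $j$, any one of which realises the edge, and each specific pairing occurs with probability roughly $1/(dn)$. Your exposure argument only controls the probability that \emph{one fixed} point of $i$'s set pairs into $j$'s set, which is not the event $ij\in E(G)$. Concretely, for a single edge in $G_{n,d}$ the containment probability is exactly $(dn/2)/\binom{n}{2}\sim d/n$, so $(C/n)^m$ is impossible for any absolute constant $C$.

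This error cascades into your monotonicity claim: with the bound $(C/n)^m$ the summand is $(eCn/(2m))^m$, increasing only for $m\le Cn/2$, and you need $\delta dn\le Cn/2$; your justification ``$\delta d\le\delta\log^2 n\le C/2$'' fails for large $n$ (and $d\le\log^2 n$ is not a hypothesis of the proposition). With the \emph{correct} bound $\mathbb{P}[R\subseteq G]\le (Cd/n)^m$ both problems vanish: the summand becomes $(eCdn/(2m))^m$, increasing for all $m\le Cdn/2$, and at $m=\delta dn$ one gets $(eC/(2\delta))^{\delta dn}$, whose logarithm is $\delta dn(\log(1/\delta)+O(1))\le 2\delta dn\log(1/\delta)$ for small $\delta$. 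So your strategy is salvageable once the per-edge estimate is fixed.

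That said, the paper's proof is entirely different and a single line. It observes that $\sum_{R\in\mathcal R}\mathbb{P}[R\subseteq G]=\mathbb{E}[X_{\mathcal R}]$, where $X_{\mathcal R}$ counts the $R\in\mathcal R$ contained in $G$. Since any $d$-regular $G$ has exactly $dn/2$ edges, $X_{\mathcal R}$ is \emph{deterministically} at most the number of subgraphs of $G$ with at most $\delta dn$ edges, namely $\sum_{i\le\delta dn}\binom{dn/2}{i}\le e^{2\delta dn\log(1/\delta)}$. No configuration model, no per-graph probability estimate, no $e^{3d^2}$ correction. Your route (after correction) yields the same bound, but the paper's observation that the expectation is bounded pointwise replaces all of the analytic work.
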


\begin{proof}
For each $R\in\mathcal{R}$, let $X_R$ be an indicator random variable where $X_R(G)= 1$ if and only if $R \subseteq G$.
Let $X_{\mathcal{R}} \coloneqq \sum_{R \in \mathcal{R}}X_R$.
Then $\mathbb{E}[X_{\mathcal{R}}] =  \sum_{R \in \mathcal{R}}\mathbb{P}[R \subseteq G]$.
Moreover, note that we always have  $X_{\mathcal{R}} \leq \sum_{i=1}^{\delta d n} \binom{dn/2}{i} \le  e^{2\delta d n \log(1/\delta)}$ and, therefore, $ \sum_{R \in \mathcal{R}}\mathbb{P}[R \subseteq G]=\mathbb{E}[X_{\mathcal{R}}] \le  e^{2\delta d n \log(1/\delta)}$, as desired. \COMMENT{
$\sum_{i=1}^{\delta d n} \binom{dn/2}{i} \le (\delta d n)(e/(2\delta))^{\delta d n} = e^{\log(\delta dn)+\delta d n(1 - \log2 + \log(1/\delta))} \le  e^{2\delta d n \log(1/\delta)}$.}
\end{proof}

The following result can easily be proved using ``P\'osa rotations'' (see e.g.~\cite{Kri16}). \COMMENT{
Suppose we consider every sequence of rotations of $P$ to obtain a set of endpoints $A$ of maximum size and suppose that $|A| < n/400$.
Since $R$ is a $3$-expander we have that $N_R(A) \ge 3|A|$.
It follows that there exists a vertex $a \in A$ with a neighbour $x \in V(P)$ such that none of $x, x^{+}$ or $x^{-}$ are elements in $A$, where $x^{+}$ and $x^{-}$ are the successor and predecessor of $x$ on $P$. (Note that each $a \in A$ can `block' 3 potential vertices: itself, its predecessor and its successor. 
This totals $3|A|$ vertices.
However, the endpoint $a' \in A$ of $P$ can only block itself and its successor. Therefore at most $3|A|-1$ are blocked and the claim follows.)
So $xa$ is an edge.
Consider the path $P_a$.
It follows that we can rotate $P_a$ to obtain a path $P_{x'}$, with $x'\in\{x^+,x^-\}$. 
Thus, $A$ was not maximum.
By the definition of a $3$-expander we can continue like this until $|A| \ge n/400$ and the result follows.}

\begin{lemma}\label{lem: posa}
Let $R$ be a $3$-expander and let $P$ be a longest path in $R$, with endpoint $v$.
Then, there exists a set $A \subseteq V(P)$ with $|A| > n/10^4$ such that for each $a \in A$ there exists a path $P_a$ in $R$ with endpoints $v$ and $a$, and such that $V(P_a) = V(P)$.
\end{lemma}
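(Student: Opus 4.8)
The plan is to run the standard P\'osa rotation argument, using the $3$-expansion of $R$ to guarantee that the set of endpoints reachable by rotations keeps growing until it is of linear size. Fix a longest path $P$ in $R$ with one fixed endpoint $v$. For any sequence of rotations of $P$ that keeps $v$ fixed, one obtains another path on the same vertex set $V(P)$ with $v$ as one endpoint; the other endpoint we call an \emph{endpoint reachable from $P$}. Let $A$ be a maximal set of such reachable endpoints, i.e. a set closed under the rotation operation in the sense that no single further rotation (from any path realising an element of $A$) produces a new endpoint outside $A$. The goal is to show $|A| > n/10^4$, which (since $400 < 10^4$, so $n/400 > n/10^4$) is slightly weaker than what the expansion hypothesis can deliver, so there is some slack.

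The key steps, in order: (1) Observe first that since $P$ is a \emph{longest} path, every neighbour in $R$ of any endpoint $a\in A$ must already lie on $P$ (otherwise we could extend, contradicting maximality of $|V(P)|$), so $N_R(A)\subseteq V(P)$. (2) Suppose for contradiction that $|A|\le n/400$. Apply the $3$-expander property to $A$: $|N_R(A)|\ge 3|A|$. (3) Now the crucial counting step: for each $a\in A$, fix a path $P_a$ on $V(P)$ with endpoints $v$ and $a$. Each such $a$ can "block" at most three vertices of $P_a$ from being useful rotation pivots — namely $a$ itself, and the two neighbours of $a$ along $P_a$ (its predecessor and successor on $P_a$) — because rotating at any other neighbour $x$ of $a$ on $P_a$ produces a new endpoint $x^+$ or $x^-$ which would have to lie outside $A$ by maximality. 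Counting these blocked vertices over all $a\in A$ gives at most $3|A|$ vertices (in fact one can shave this to $3|A|-1$ by noting the fixed endpoint $v$ of $P_a$ contributes only two blocked vertices instead of three, but $3|A|$ suffices here). (4) Since $|N_R(A)|\ge 3|A|$ strictly exceeds the number of blocked vertices (using the $-1$, or simply re-running the argument slightly more carefully), there must exist some $a\in A$ and some neighbour $x\in N_R(a)\cap V(P)$ with $x$, $x^+$, $x^-$ all outside the blocked set, so rotating $P_a$ at the edge $ax$ yields a path on $V(P)$ with endpoints $v$ and one of $x^+,x^-$, a new endpoint not in $A$. This contradicts the maximality of $A$. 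Hence $|A| > n/400 > n/10^4$, and by construction each $a\in A$ comes with the required path $P_a$, completing the proof.

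The main obstacle — really the only subtle point — is step (3)/(4): making the blocking count precise enough that $|N_R(A)| \ge 3|A|$ genuinely forces an unblocked neighbour. The naive bound gives exactly $3|A|$ blocked vertices against $\ge 3|A|$ neighbours, which is not quite a contradiction; one has to exploit either that the fixed endpoint $v$ of the path $P_a$ is never a valid pivot and blocks only two vertices (giving $3|A|-1$), or that $a$'s own position as an endpoint of $P_a$ means it blocks only itself and one neighbour. Either way a clean accounting closes the gap. Everything else — that rotations preserve $V(P)$ and the fixed endpoint, that a longest path cannot be extended — is routine, and the weaker target $n/10^4$ versus the available $n/400$ means we are not fighting for constants.
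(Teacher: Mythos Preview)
Your proposal is correct and matches the paper's approach essentially line by line. The paper does not give an explicit proof but simply states that the result ``can easily be proved using `P\'osa rotations''' and cites Krivelevich's survey; the hidden sketch it records is precisely your argument, including the same resolution of the subtle point you flag --- namely that the original endpoint of $P$ (which lies in $A$) blocks only two vertices instead of three, so at most $3|A|-1$ vertices are blocked, which is strictly less than $|N_R(A)|\ge 3|A|$.
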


\begin{definition}[Booster]
Let $H$ be a graph and let $E \subseteq V(H)^{(2)}$.
Let $F\coloneqq(V(H),E)$.
We call $E$ a \emph{booster for $H$} if the graph $H+F$ contains a longer path than $H$ does, or if $H+F$ is Hamiltonian. 
\end{definition}

We will often be interested in the case where $E$ consists of a single edge  $e\notin E(H)$.
In this case we refer to $e$ as a \emph{booster} for $H$.

Given any path $P$ with endpoints $u$ and $v$, assume an orientation on its edges (say, from $u$ to $v$).
Given any vertex $x\in V(P)\setminus\{v\}$, we call the vertex that follows $x$ in this orientation its \emph{successor}, and we denote this by $\mathit{suc}_P(x)$.

\begin{lemma}\label{lem: booster}
For all $0 < \eps < 1/10^5$ there exist $\delta, D >0$ such that for $D \le d \le \log^2 n$ the random graph $G = G_{n,d}$ a.a.s.~satisfies the following.

Let $H \in \mathcal{H}_{1/2 - \eps}(G)$ and let $G'\coloneqq G\setminus H$.
Let $R\subseteq G'$ be a spanning $3$-expander with $\Delta(R) \le 2\delta d$, and let $S \subseteq [n]$ with $|S|\le \delta n$.
Then, there exists a set $V_R \subseteq [n]$ with $|V_R|\geq n/10^4$ with the following property: 
for each $v \in V_R$, there exists a set $U_v \subseteq [n]$ with $|U_v| \ge (1/2 + \eps/8)n$ such that, for each $u \in U_v$, there exists a set $E_{v,u}$ as follows:
\begin{enumerate}[label=(\alph*)]
    \item\label{itm: a} $E_{v,u} \subseteq E((G'\setminus R)[[n]\setminus S])$ with $|E_{v,u}| \ge 50/(\eps \delta)$,
    \item\label{itm: b} $\{uv, e\}$ is a booster for $R$ for every $e \in E_{v,u}$,
    \item\label{itm: c} $E_{v,u_1}\cap E_{v,u_2}=\varnothing$ for all $u_1\neq u_2$.
\end{enumerate}
\end{lemma}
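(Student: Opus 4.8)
The plan is to run P\'osa's rotation technique to produce a rich family of longest paths of $R$, to assemble ``booster pairs'' $\{uv,e\}$ in the spirit of \citet{Mont17} (one edge performing a rotation, the other closing a cycle or lengthening a path), and to absorb the unavoidable dependence on $R$ by a union bound over all sufficiently ``thin'' subgraphs of $G$. If $R$ is already Hamiltonian the conclusion is trivial, so assume not. Fix a longest path $P$ of $R$ and one endpoint $x_0$ of it, and apply \cref{lem: posa} to obtain the set $V_R\subseteq V(P)$ with $|V_R|>n/10^4$, together with, for each $v\in V_R$, a longest path $P^v$ of $R$ from $x_0$ to $v$ with $V(P^v)=V(P)$. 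Since $R$ is a $3$-expander, so is $R+uv$ for every pair $uv$, so \cref{lem: posa} may be re-applied to rotated paths.

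Fix $v\in V_R$. The boosters will have the following shape. Adding the (``free'') edge $uv$ to $R$ either lengthens $P^v$ outright — exactly when $u\notin V(P)$ — or rotates it into a longest path $\hat P_{v,u}$ of $R+uv$ with a prescribed endpoint $\mathit{suc}_{P^v}(u)$ when $u\in V(P)$. In the first case $\{uv\}$ alone is a booster, so \emph{any} edge $e$ works and $E_{v,u}$ may be taken to be an arbitrary set of $50/(\eps\delta)$ edges of $(G'\setminus R)[[n]\setminus S]$; such edges exist in abundance, since $\delta(G')\ge(1/2+\eps)d$, $\Delta(R)\le 2\delta d$ and $|S|\le\delta n$ with $\delta\ll\eps$ force $e((G'\setminus R)[[n]\setminus S])=\Omega(nd)$. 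In the second case I would apply \cref{lem: posa} to $\hat P_{v,u}$ (inside $R+uv$) and then once more, producing successive families of reachable endpoints $Y_{v,u}$ and $\{Z_{v,u,y}\}_{y\in Y_{v,u}}$: for $y\in Y_{v,u}$ and $z\in Z_{v,u,y}$ the edge $yz$ closes a cycle on $V(P)$ in $R+uv+yz$, which is a Hamilton cycle if $V(P)=[n]$ and, by connectivity of $R$, extends to a longer path otherwise — so any such $yz$ lying in $E((G'\setminus R)[[n]\setminus S])$ makes $\{uv,yz\}$ a booster. To reach $|U_v|\ge(1/2+\eps/8)n$ I would split on $|V(P)|$: when $V(P)$ is small, $([n]\setminus V(P))\setminus(S\cup\{v\})$ already has the required size; when $V(P)$ is large one supplements $[n]\setminus V(P)$ by a linear-sized set of $u\in V(P)$, using the expansion of $G'$ (by \cref{prop: connexp}, every set of size $\ge n/400$ has a $G'$-neighbourhood of size $\ge(1/2+\eps/10)n$) applied to the rotation-reachable sets, together with \cref{lem: spanset,prop: small exp} and \cref{prop: edges} to certify that, for the relevant $u$, at least $50/(\eps\delta)$ of the closing candidates $yz$ actually lie in $G'\setminus R$ and avoid $S$. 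Condition \ref{itm: c} is then met greedily: the pool of valid closing edges for each $u$ exceeds $|U_v|\cdot 50/(\eps\delta)=O(n/(\eps\delta))=O(nd)$ (using $d\ge D$), so one processes the $u\in U_v$ one at a time, each time picking $50/(\eps\delta)$ as-yet-unused edges.

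The construction above is deterministic once $G$ is fixed; what must be proved is that it succeeds for \emph{every} admissible $H\in\mathcal H_{1/2-\eps}(G)$, spanning $3$-expander $R\subseteq G'$ with $\Delta(R)\le 2\delta d$, and $S$ with $|S|\le\delta n$, a.a.s.\ over $G=G_{n,d}$. Since $e(R)\le\delta dn$, \cref{prop: small subgraphs} bounds the total $\mathbb P$-mass of the relevant graphs $R$ by $e^{O(\delta dn\log(1/\delta))}$; there are $e^{O(\delta n\log(1/\delta))}$ choices of $S$ and only polynomially many choices of $(v,u)$; each relevant ``bad'' edge-count event has probability $e^{-\Omega(\eps^{O(1)}nd)}$ by \cref{prop: edges} (conditioned on $R\subseteq G$, with the collections $Z_a$ recording the incidences of the $\Omega(n^2)$ closing candidates of a given $(v,u)$), and the expansion/sparsity inputs \cref{lem: spanset,prop: small exp,prop: connexp} hold a.a.s. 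Choosing $\delta$ small enough in terms of $\eps$ (and $D$ large) absorbs $e^{O(\delta dn\log(1/\delta))}$ and a union bound closes.

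The delicate point — and the main obstacle — is the combinatorial heart of the second step: arranging the rotations so that the boosters genuinely factor as $\{uv,e\}$ with $v$ over the $>n/10^4$ reachable endpoints, $u$ over a set of size \emph{more than half} of $[n]$, and $e$ over $\ge 50/(\eps\delta)$ edges of $(G'\setminus R)[[n]\setminus S]$. The requirement that $U_v$ exceed $n/2$ essentially pins $U_v$ to a $G'$-neighbourhood coming from \cref{prop: connexp}, which only guarantees a \emph{single} $G'$-edge into the rotation set per vertex $u$, whereas the bound on $|E_{v,u}|$ needs many boosters for each such $u$; reconciling these two demands, while keeping the closing-candidate sets large enough and robust enough — both against deletion of the up-to-$(1/2-\eps)d$-regular graph $H$ and uniformly over all thin $R$ — so that the union bound still closes, is where the real work lies.
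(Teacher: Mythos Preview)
Your scheme assigns the \emph{primary} edge $uv$ as the rotation edge and the \emph{secondary} edge $e=yz$ as the closing edge between two P\'osa rotation sets $Y_{v,u}$ and $Z_{v,u,y}$, each of size only $\Theta(n/10^4)$. This is where the argument breaks: from any fixed vertex $y\in Y_{v,u}$, the number of candidate closing partners in $Z_{v,u,y}$ is $\Theta(n/10^4)$, so the expected number of candidate edges at $y$ in $G$ is $\Theta(d/10^4)$ --- but the adversary $H$ may delete up to $(1/2-\eps)d$ edges at $y$, which exceeds this by orders of magnitude. Hence $H$ can remove \emph{every} candidate secondary edge, and your claimed pool of $\Omega(nd)$ closing edges in $G'\setminus R$ need not exist at all. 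Neither \cref{prop: edges} nor the $G'$-expansion from \cref{prop: connexp} rescues this, since the former only controls edge counts in $G$ (not $G'$), and the latter gives neighbourhoods, not edges of a prescribed bipartite type. You correctly flag this tension at the end of your sketch, but the plan as written does not resolve it.

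The paper reverses the roles. Fixing $v$, it takes a set $A$ of $\eps n/20$ rotation endpoints and, for each $a\in A$, lets the \emph{secondary} edge be the rotation edge $ab$ from $a$ to an almost arbitrary $b\in[n]\setminus(A\cup S)$; the resulting new endpoint is $u\coloneqq\mathit{suc}_{P_a}(b)$, and the \emph{primary} edge $uv$ closes the cycle. Now each $a\in A$ has $\sim n$ candidate secondary edges (so $\sim d$ in $G$), and $H$ can only kill at most $(1/2-\eps)d$ of them per $a$ --- leaving $(1/2+\eps)|A|d$ secondary edges overall in $G'$. The indexing $u=\mathit{suc}_{P_a}(b)$ makes the families $E_{v,u}$ automatically disjoint and of size at most $|A|$ each; the bound $|U_v|\ge(1/2+\eps/8)n$ then follows from a two-event comparison via \cref{prop: edges}: one event ($\mathcal F_1$) lower-bounds the total number of secondary edges landing in $G$, the other ($\mathcal F_2$) upper-bounds how many can be captured by any $U$ of size $\le(1/2+\eps/8)n$. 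It is this assignment --- secondary edge dense from each anchor $a$, primary edge determined by it --- that makes the construction robust against $H$.
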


\begin{proof}
Let $1/D \ll \delta \ll \eps < 1/10^5$.
Let $\mathcal{R}$ be the set of all $n$-vertex $3$-expander graphs $R$ on $[n]$ with $\Delta(R) \le 2\delta d$.
It follows by \cref{lem: posa} that for each $R\in\mathcal{R}$ there exists a set $V_R\subseteq[n]$ of size $|V_R|\geq n/10^4$ such that for every $v \in V_R$ there exists a longest path in $R$ terminating at $v$.

For each $R\in \mathcal{R}$, $v \in V_R$ and $S \subseteq [n]$ with $|S| \leq \delta n$, let $f(R,S,v)$ be the event that, for every $H \in \mathcal{H}_{1/2 - \eps}(G)$ such that $R\subseteq G'$, there exists a set of vertices $U_v \subseteq [n]$ with $|U_v| \ge (1/2 + \eps/8)n$ and such that for each $u \in U_v$ there exists a set $E_{v,u}$ satisfying \ref{itm: a}--\ref{itm: c}.
With this definition, the probability $p^{*}$ that the assertion in the lemma fails is bounded by
\begin{equation}\label{p} 
p^{*}\leq\sum_{S\subseteq[n]:|S|\leq\delta n}\sum_{R \in \mathcal{R}} \sum_{v \in V_R} \mathbb{P}[\overline{f(R,S,v)} \mid R \subseteq G]\,\mathbb{P}[R \subseteq G].
\end{equation}

For fixed $R \in \mathcal{R}$, $v \in V_R$ and $S \subseteq [n]$ with $|S| \leq \delta n$, we shall now estimate $\mathbb{P}[\overline{f(R,S,v)} \mid R \subseteq G]$.
Let $P$ be a longest path in $R$ with endpoint $v$.
As $R$ is a 3-expander, by \cref{lem: posa} there must exist a set $A \subseteq V(P)\setminus S$ of size $|A|=\eps n/20$ such that, for each $a \in A$, there is a longest path $P_a$ in $R$ starting at $v$ and ending at $a$ with $V(P_a)=V(P)$ (if there is more than one such path, fix one arbitrarily).
Assume that each $P_a$ is oriented from $v$ to $a$.
Let $B\coloneqq [n] \setminus (A\cup S\cup \{v\})$.
For each $u \in B \cap V(P)$, let $X_u\coloneqq\{ab:a\in A,b\in B,u=\mathit{suc}_{P_a}(b)\}$.
Observe that $\{uv, ab\}$ is a booster for $R$ for any $ab\in X_u$\COMMENT{Here we use that $R$ is connected.}. 
Clearly, $|X_u| \le |A|$ and $X_u \cap X_{u'} = \varnothing$ for all distinct $u, u' \in B\cap V(P)$.
Furthermore, for each $u \in B\setminus V(P)$, let $X_u\coloneqq\{au:a\in A\}$. 
Note that $au\in X_u$ is a booster since its inclusion would result in a longer path in $R$.
We shall now show that, for most vertices $u\in B$, there is a `large' set of boosters, that is, $X_u$ is `large'.
We will then use this to show that many of these boosters must lie in $G'\setminus R$.

For every $a \in A$, there are at least $|V(P)| - 2|A| - 2|S|-2$ vertices $b \in V(P)$ such that neither $b$ nor its successor on $P_a$ belong to $A\cup S\cup \{v\}$.
It follows that $|\bigcup_{u \in B\cap V(P)} X_u| \ge |A|(|V(P)| - 2|A|- 2|S|-2)$.
We also have that $|\bigcup_{u \in B \setminus V(P)} X_u| = |A|(n - |V(P)\cup S|)$.
Therefore, the following holds: 
\begin{align*}
    \bigg|\bigcup_{u \in B}X_u\bigg| &\geq |A|(|V(P)|- 2|A| -2|S|-2) + |A|(n - |V(P)\cup S|) \\
    &\geq |A| (n - 2|A| - 3|S|-2) \geq (1-\eps/9)|A|n.
\end{align*}

For each $u \in B$, let $Y_u \coloneqq X_u \setminus E(R)$.
It follows that 
\[\bigg|\bigcup_{u \in B}Y_u\bigg| \ge (1-\eps/9)|A|n -e(R) \ge (1-\eps/8)|A|n.\]
For each $a \in A$, let $Z_a$ be the set of edges in $\bigcup_{u \in B} Y_u$ with $a$ as an endpoint. 
It is easy to see that $\sum_{a\in A}|Z_a|=|\bigcup_{a \in A} Z_a| = |\bigcup_{u \in B}Y_u|\ge (1-\eps/8)|A|n$.
Consider now the following events:
\begin{enumerate}[label=$\mathcal{F}$\arabic*:]
\item\label{item: propF1} $|\bigcup_{a \in A}(Z_a \cap E(G))| \ge (1-\eps/4)|A|d$.
\item\label{item: propF2} For any $U \subseteq B$ with $|U| \le (1/2 + \eps/8)n$ we have $|\bigcup_{u \in U}Y_u \cap E(G)| < (1/2 + \eps/4)|A|d$. 
\end{enumerate}
From two applications of \cref{prop: edges} we obtain that $\mathbb{P}[\mathcal{F}_1 \wedge \mathcal{F}_2 \mid R \subseteq G] \ge 1 - e^{-(\eps/500)^4dn}$.
\COMMENT{For $\mathcal{F}_1$ we have that the sets $Z_a$ are disjoint from one another. 
Thus, $\sum_{a\in A}|Z_a \cap E(G))|= |\bigcup_{a \in A}(Z_a \cap E(G)|$.
Furthermore, we have $|\bigcup_{a \in A} Z_a| \ge (1-\eps/8)|A|n > (\eps/30)n^2$.
Therefore we can apply \cref{prop: edges} with $\eps/30$ playing the role of $\eps$ to conclude that $ |\bigcup_{a \in A}(Z_a \cap E(G))| \ge (1-\eps/8) |\bigcup_{a \in A}Z_a|d/n \ge (1-\eps/4)|A|d$ with probability at least $1- e^{-(\eps/300)^4dn}$ (conditioned on $R\subseteq G$). \\
For $\mathcal{F}_2$ we can use \cref{prop: edges} again. 
Let $U'\subseteq B$ be such that $U \subseteq U'$ and $|U'| = (1/2 + \eps/8)n$.
For $u\in U'$, we let the sets $Z_u$ be the sets $Y_u$; note that these sets are disjoint.
It follows that $|\bigcup_{u \in U'}Y_u \cap E(G)| = \sum_{u \in U'}|Y_u \cap E(G)|$.
We note that $|Y_u| \le |A|$ for each $u\in B$ (since this was true for $|X_u|$).
It follows that $z \leq |A||U'|= |A|(1/2 + \eps/8)n$ and we can add any arbitrary `extra edges' to some of the $Y_u$ if necessary to form sets $Y_u'$ with $|Y_u'| = |A|$, ensuring that  $z> (\eps/40) n^2$.
We apply \cref{prop: edges} with $\eps/40$ playing the role of $\eps$ to obtain $|\bigcup_{u \in U}Y_u \cap E(G)|< |\bigcup_{u \in U'}Y_u' \cap E(G)| < (1+ \eps/40)|U'||A|d/n \le (1/2 + \eps/4)|A|d$, with probability at least $1- e^{-(\eps/400)^4dn}$ (conditioned on $R\subseteq G$), that is, the probability that $\mathcal{F}_2$ does not hold for this particular $U$ is at most $e^{-(\eps/400)^4dn}$.
As this is true for any fixed $U$, a union bound shows that the probability that the statement in $\mathcal{F}_2$ fails is at most $2^ne^{-(\eps/400)^4dn}\leq e^{-(\eps/450)^4dn}$ (conditioned on $R\subseteq G$).}
To finish the proof we must show that if $\mathcal{F}_1 \wedge\mathcal{F}_2$ holds, then $f(R,S,v)$ also holds.
Consider any $G\in\mathcal{G}_{n,d}$ which satisfies both $\mathcal{F}_1$ and $\mathcal{F}_2$ and such that $R\subseteq G$.
Fix any $H\in\mathcal{H}_{1/2-\eps}(G)$ such that $R\subseteq G'=G\setminus H$.
For each $u \in B$, let $E_u \coloneqq Y_u \cap E(G')$.
As we have seen above, for each $e \in E_u$, the set $\{uv, e\}$ is a booster for $R$.
Furthermore, none of the endvertices of $e$ lies in $S$, by construction.
Let $U\subseteq B$ be the set of vertices $u\in B$ for which $|E_u| \ge 50/(\eps \delta)$.
Observe that, by $\mathcal{F}_2$, if $|\bigcup_{u \in U}Y_u \cap E(G)| \geq (1/2 + \eps/4)|A|d$, then $|U| \ge (1/2 + \eps/8)n$.
But\COMMENT{In the fourth line we are using $|A|=\eps n/20=50\eps n/1000$.}
\begin{align*}
\bigg|\bigcup_{u \in U}Y_u \cap E(G)\bigg|\geq\bigg|\bigcup_{u \in U}E_u\bigg| & = \bigg|\bigcup_{u \in B}Y_u \cap E(G)\bigg| - \bigg|\bigcup_{u\in B}Y_u \cap E(H)\bigg| - \sum_{u \in B\setminus U}|E_u|\\
 &\ge \bigg|\bigcup_{a \in A}Z_a \cap E(G)\bigg| - \bigg|\bigcup_{a \in A}Z_a \cap E(H)\bigg| - \frac{50}{\eps\delta}|B\setminus U|\\
 &\stackrel{\mathclap{(\mathcal{F}1)}}{\ge}\left(1- \frac\eps4\right)|A|d - \sum_{a \in A}\left(\frac12 - \eps\right)d_G(a) - \frac{50}{\eps\delta}n\\
 &\ge\left(1- \frac\eps4\right)|A|d - \left(\frac12 - \eps\right)|A|d - 10^3\frac{|A|d}{\eps^2d\delta}\\
 &\ge \left(\frac12 + \frac\eps4\right)|A|d.
\end{align*}
Hence, by $\mathcal{F}_2$ we have that $|U| \ge (1/2 + \eps/8)n$, as we wanted to show. 
Since $H$ was arbitrary, it follows that $f(R,S,v)$ holds.
Thus,
\[\mathbb{P}[f(R,S,v)\mid R\subseteq G]\geq\mathbb{P}[\mathcal{F}_1\wedge\mathcal{F}_2\mid R\subseteq G]\geq1-e^{-(\eps/500)^4dn}.\]

We can now use this bound in equation \eqref{p} to obtain
\[p^* \le 2^nne^{-(\eps/500)^4dn} \sum_{R \in \mathcal{R}} \mathbb{P}[R \subseteq G]\leq 2^nn e^{-(\eps/500)^4dn} e^{2\delta dn \log (1/\delta)} = o(1),\]
where the second inequality follows from \cref{prop: small subgraphs}.
This shows the statement in the lemma holds a.a.s.
\end{proof}

\begin{definition}\label{def: help}
Given graphs $H$ and $H'$ with $V(H) = V(H') = V$ and $E(H) \cap E(H') = \varnothing$, we say $H$ has \emph{$\eps$-many boosters with help from $H'$} if there are at least $\eps|V|$ vertices $v \in V$ for which there exists a set $U_v \subseteq V\setminus \{v\}$ of size at least $(1/2 + \eps)|V|$ with the property that for every $u \in U_v$ there exists $e \in E(H')$ so that $\{uv, e\}$ is a booster for $H$. 
We call $uv$ the \emph{primary edge} and we call $e$ the \emph{secondary edge}.
\end{definition}

\begin{corollary}\label{cor: boost}
For all $0 <\eps < 1/10^5 $ there exist $\delta,D >0$ such that for $D \le d \le \log^2 n$ the random graph $G = G_{n,d}$ a.a.s.~satisfies the following.

Let $H \in \mathcal{H}_{1/2-\eps}(G)$ and let $G'\coloneqq G \setminus H$.
Let $R\subseteq G'$ be a spanning $3$-expander with $\Delta(R)\le 2\delta d$, and let $S \subseteq [n]$ with $|S| \leq \delta n$.
Then, there exists some subgraph $F \subseteq G'\setminus R$ satisfying $\Delta(F) \le 2\delta d$, such that $R$ has $\eps/16$-many boosters with help from $F$, with the property that the set of secondary edges is vertex-disjoint from $S$.
\end{corollary}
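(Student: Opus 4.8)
The plan is to construct $F$ as a \emph{sparse random} subgraph of $G'\setminus R$ and then read off the conclusion from \cref{lem: booster}. First I would fix $\delta$ and $D$ with $1/D\ll\delta\ll\eps$, chosen small enough (resp.\ large enough) that the conclusion of \cref{lem: booster} holds with these parameters, and condition on $G=G_{n,d}$ lying in the a.a.s.\ event it provides. Applying \cref{lem: booster} to the given $H$, $R$ and $S$ supplies a set $V_R$ with $|V_R|\ge n/10^4$ and, for every $v\in V_R$, a set $U_v$ with $|U_v|\ge(1/2+\eps/8)n$ together with pairwise disjoint edge sets $E_{v,u}\subseteq E((G'\setminus R)[[n]\setminus S])$, each of size at least $50/(\eps\delta)$, such that $\{uv,e\}$ is a booster for $R$ for every $e\in E_{v,u}$. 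I would then let $F$ be the random subgraph of $G'\setminus R$ that keeps each edge independently with probability $p\coloneqq\delta$, and prove that with positive probability $F$ simultaneously satisfies (i) $\Delta(F)\le 2\delta d$ and (ii) for every $v\in V_R$, at most $\eps n/20$ of the vertices $u\in U_v$ have $E_{v,u}\cap E(F)=\varnothing$. For such an $F$ one has $F\subseteq G'\setminus R$ (so $E(F)\cap E(R)=\varnothing$), $\Delta(F)\le 2\delta d$, and for each of the $|V_R|\ge n/10^4\ge(\eps/16)n$ vertices $v\in V_R$ a set $U_v'\subseteq U_v\setminus\{v\}$ with $|U_v'|\ge(1/2+\eps/16)n$ such that every $u\in U_v'$ admits some $e\in E_{v,u}\cap E(F)$ with $\{uv,e\}$ a booster for $R$; since every such $e$ has both endpoints in $[n]\setminus S$, the secondary edges avoid $S$, and this is exactly the assertion.

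For (i) I would argue exactly as in the proof of \cref{prop: expexist}. We have $\mathbb{E}[d_F(v)]\le pd=\delta d$, and whenever $d_{G'\setminus R}(v)>2\delta d$ this expectation is at least $2\delta^2 d$, which is large because $1/D\ll\delta$; hence \cref{lem: Chernoff} gives $\mathbb{P}[d_F(v)>2\delta d]\le 2e^{-\delta^2 d/3}$ for \emph{every} $v\in[n]$. As this event depends only on the edges incident to $v$, the graph $G$ is a dependency graph of maximum degree at most $d$ for the family $\{d_F(v)>2\delta d\}_{v\in[n]}$, and the hypothesis of \cref{lem: LLL} is satisfied because $\delta^2 d$ exceeds a constant multiple of $\log d$ throughout $D\le d\le\log^2 n$. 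The local lemma then yields that (i) holds with probability at least $(1-2e\cdot e^{-\delta^2 d/3})^n\ge e^{-n/2}$. Note that a plain union bound over the $n$ vertices is not available here, since $d$ may be only a large constant, so $2ne^{-\delta^2 d/3}$ need not tend to $0$; routing through \cref{lem: LLL} is essential.

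For (ii) I would fix $v\in V_R$ and use that the sets $E_{v,u}$, $u\in U_v$, are pairwise disjoint (see \cref{lem: booster}\ref{itm: c}); thus the events $\{E_{v,u}\cap E(F)=\varnothing\}$ are mutually independent, each of probability $(1-\delta)^{|E_{v,u}|}\le e^{-50/\eps}$. Consequently the number of ``uncovered'' $u$ is stochastically dominated by $\mathrm{Bin}(n,e^{-50/\eps})$, and since $e^{-50/\eps}$ is far below $\eps$ for $\eps<10^{-5}$, \cref{lem: betaChernoff} bounds the probability that this number exceeds $\eps n/20$ by $(e^{-25/\eps})^{\eps n/20}\le e^{-n}$. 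A union bound over the at most $n$ vertices of $V_R$ then shows that (ii) fails with probability at most $ne^{-n}$, which is far smaller than the lower bound $e^{-n/2}$ from (i). Hence some realisation of $F$ satisfies both (i) and (ii), and the conclusion follows (the slack between $\eps/8$, $\eps/20$ and $\eps/16$ absorbs the removal of $v$ itself from $U_v$ and rounding).

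I expect the trade-off in the last step to be the main obstacle. One must keep $\Delta(F)$ small while, for linearly many vertices $v$, almost all of $U_v$ stays covered by surviving booster edges. The degree bound is only obtainable via the local lemma, hence with a success probability that is merely $e^{-o(n)}$ (indeed only $\ge e^{-cn}$ for a small constant $c$ when $d$ is constant), so the covering property has to fail with probability that is genuinely exponentially small with rate bounded away from $0$. This is precisely what the bound $|E_{v,u}|\ge 50/(\eps\delta)$ buys: it forces each ``uncovered'' event to have probability doubly-exponentially small in $1/\eps$. Beyond this balancing act, everything is a routine combination of \cref{lem: booster}, \cref{lem: Chernoff}, \cref{lem: betaChernoff} and \cref{lem: LLL}.
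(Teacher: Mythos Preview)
Your proposal is correct and follows essentially the same route as the paper: sparsify $G'\setminus R$ by keeping each edge independently with a probability of order $\delta$, use \cref{lem: LLL} together with a Chernoff-type bound to guarantee $\Delta(F)\le 2\delta d$ with probability at least $e^{-cn}$, and use the disjointness of the sets $E_{v,u}$ from \cref{lem: booster}\ref{itm: c} with a Chernoff-type bound plus a union bound over $v\in V_R$ to show the covering property fails with probability strictly smaller than this. The paper uses retention probability $\delta/2$ and invokes \cref{lem: betaChernoff} for the degree bound and \cref{lem: Chernoff} for the covering bound, whereas you take retention probability $\delta$ and swap the roles of these two lemmas; these are cosmetic differences. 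One small remark: when bounding $\mathbb{P}[d_F(v)>2\delta d]$ you can exploit that $d_{G'\setminus R}(v)\ge (1/2+\eps)d-2\delta d\ge d/4$ for \emph{every} $v$, which gives the cleaner bound $e^{-c\delta d}$ used in the paper rather than your $e^{-c\delta^2 d}$; your weaker bound still suffices since $1/D\ll\delta$.
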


\begin{proof}
Let $1/D \ll \delta \ll \eps < 1/{10}^5$. 
Condition on the event that $G$ satisfies all the properties in the statement of \cref{lem: booster}, which happens a.a.s.
Let $H, G', R, S$ be as in the statement of \cref{cor: boost}.
By \cref{lem: booster}, we may find a set $V_R \subseteq [n]$ of size $|V_R|\geq n/10^4$ such that, for each $v\in V_R$, there exists a set $U_v \subseteq [n]$ with $|U_v| \ge (1/2 + \eps/8)n$ such that, for each $u\in U_v$, there exists a set $E_{v,u} \subseteq E((G'\setminus R)[[n]\setminus S])$ with $|E_{v,u}| \ge 50/(\eps \delta)$ and such that, for every $e \in E_{v,u}$, $\{uv, e\}$ is a booster for $R$, and such that $E_{v,u_1}\cap E_{v,u_2}=\varnothing$ for all $u_1\neq u_2$.
Note that each such edge $e$ is vertex-disjoint from $S$, by construction.

Let $F$ be a random subgraph of $G'\setminus R$ where every edge in $G'\setminus R$ is chosen independently at random with probability $\delta/2$.
For each $v \in V_R$, let $U'_v \subseteq U_v$ be the set of vertices $u\in U_v$ for which $E_{v,u} \cap E(F) \neq \varnothing$.
For every $u \in U_v$, we have that 
\[\mathbb{P}[u \notin U'_{v}] \le (1-\delta/2)^{50/(\eps \delta)} \le e^{-25/\eps} \le \eps/32.\]
Let $\mathcal{A}$ be the event that $|U'_v| \ge (1/2 + \eps/16)n$ for every $v \in V_R$.
Since for different $u \in U_v$ the sets $E_{v,u}$ are disjoint, by \cref{lem: Chernoff} we have \COMMENT{
Since $|U_v| \ge (1/2 + \eps/8)n$, we have $(1/2 + \eps/16)n \le (1/2 + \eps/16)(1/2 + \eps/8)^{-1}|U_v| \le (1-\eps/32)^2|U_v|$.
As $\mathbb{E}[|U_v'|]\geq(1-\eps/32)|U_v|$, it follows that $(1/2 + \eps/16)n \le (1-\eps/32)\mathbb{E}[|U_v'|]$.
By \cref{lem: Chernoff} we conclude that $\mathbb{P}[|U'_{v}| \le (1- \eps/32)(1- \eps/32)|U_v|] \le \mathbb{P}[|U'_{v}| \le (1- \eps/32)\mathbb{E}[|U'_v|]]\le 2e^{-(\eps/32)^2\mathbb{E}[|U'_v|]/3} \le  2e^{-(\eps/32)^2(1-\eps/32)|U_v|/3}\leq e^{-\eps^2n/10^6}$.}
\[\mathbb{P}[|U'_v| \le (1/2 + \eps/16)n] \le \mathbb{P}[|U'_{v}| \le (1- \eps/16)|U_v|] \leq e^{-\eps^2n/10^6}\]
for each $v\in V_R$.
Therefore, 
\[\mathbb{P}[\overline{\mathcal{A}}] \le ne^{-\eps^2 n/10^6} \le e^{-\eps^{3}n}.\]

Now, let $\mathcal{B}$ be the event that $\Delta(F) \le 2\delta d$.
For each $v \in [n]$, let $\mathcal{B}_v$ be the event that $d_F(v) > 2\delta d$.
By \cref{lem: betaChernoff}, we have\COMMENT{
We have $\delta d/8 \le \mathbb{E}[d_F(v)] \le \delta d/2$.
Therefore, by \cref{lem: betaChernoff} we have that \[\mathbb{P}[d_F(v) \ge 2\delta d]\leq\mathbb{P}[d_F(v) \ge 4\mathbb{E}[d_F(v)]] \leq (e/4)^{4\mathbb{E}[d_F(v)]}\leq e^{(1-\log4)\delta d/2}< e^{-\delta d/8}.\]}
\[\mathbb{P}[B_v] < e^{-\delta d/8}\]
for all $v\in[n]$.
Now observe that $G'\setminus R$ is itself a dependency graph for $\{\mathcal{B}_v\}_{v\in [n]}$, and every vertex in this graph has degree at most $d$.
It follows by \cref{lem: LLL} that \COMMENT{
Where we note that $e^{1-\delta d/8} \le \eps^4/2$ and $1-x \ge e^{-2x}$ for $x \le \eps^4/2$.}
\[\mathbb{P}[\mathcal{B}] = \mathbb{P}\left[\bigwedge_{v \in [n]} \overline{\mathcal{B}_v}\right] \ge (1-e^{1-\delta d/8})^n \ge  e^{-\eps^{4} n} > \mathbb{P}[\overline{\mathcal{A}}].\]
Therefore, the probability both events $\mathcal{A}$ and $\mathcal{B}$ occur is strictly positive, implying that there exists some $F \subseteq G'\setminus R$ satisfying the required properties.
\end{proof}

We have now shown that a.a.s.~if the random graph $G_{n,d}$ contains a sparse $3$-expander subgraph $R$ after deleting some $H\in \mathcal{H}_{1/2-\eps}(G_{n,d})$, then $G'=G_{n,d}\setminus H$ must also have a sparse subgraph $F$ with the property that $R$ has `many' boosters with help from $F$. 
Our next  goal is to prove that some primary edge of these boosters must actually be present in $G'$.

\begin{lemma}\label{lem: rand}
For all $0 < \eps < 1/10^5$ there exist $\delta, D>0$ such that for $D \le d \le \log^2n$ the random graph $G = G_{n,d}$ satisfies the following a.a.s.
Let $S \subseteq [n]$ with $|S| \le \delta n$ and let $R, F \subseteq G$ be two spanning edge-disjoint subgraphs such that
\begin{enumerate}[label=(P\arabic*)]
    \item\label{item: lastlemma1}  $\Delta(R), \Delta(F) \le 2\delta d$,
    \item\label{item: lastlemma3} $R$ has $\eps$-many boosters with help from $F$, such that every secondary edge is vertex-disjoint from $S$.
\end{enumerate}
Then, for any $H \in \mathcal{H}_{1/2}(G)$, the graph $G' \coloneqq G \setminus H$ contains an edge $e$ for which there exists some edge $ e' \in E(F)$ with the property that $\{e, e'\}$ is a booster for $R$, and such that $V(\{e, e'\}) \cap S = \varnothing$. 
\end{lemma}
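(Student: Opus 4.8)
Let $1/D\ll\delta\ll\eps<1/10^5$. I would fix once and for all a triple $(R,F,S)$ satisfying \ref{item: lastlemma1} and \ref{item: lastlemma3}. By \ref{item: lastlemma3} and \cref{def: help} there is a set $W\subseteq[n]$ with $|W|\ge\eps n$ and, for each $v\in W$, a set $U_v\subseteq[n]\setminus\{v\}$ with $|U_v|\ge(1/2+\eps)n$ such that every $u\in U_v$ together with some edge of $F$ forms a booster for $R$; fix such a choice. Set $W'\coloneqq W\setminus S$, so $|W'|\ge\eps n/2$ as $|S|\le\delta n$ and $\delta\ll\eps$, and for $v\in W'$ let $Z_v\coloneqq\{uv:u\in U_v\setminus S\}\setminus E(R+F)$. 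Since $|S|\le\delta n$ and $\Delta(R+F)\le 4\delta d=o(n)$, we have $|Z_v|\ge(1/2+\eps/4)n$ for every $v\in W'$.

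The core of the argument is the following estimate: conditioned on $R+F\subseteq G$, a.a.s.
\[\sum_{v\in W'}\bigl|Z_v\cap E(G)\bigr|\ \ge\ \Bigl(\tfrac12+\tfrac\eps8\Bigr)|W'|d .\]
I would prove this by \cref{prop: edges}, taking $R+F$ in the role of the sparse graph there — this is legitimate because $\Delta(R+F)\le 4\delta d<\delta_0 d$, where $\delta_0$ is the constant \cref{prop: edges} returns on input $\eps/8$, and $\delta\ll\eps$ allows $\delta<\delta_0/4$ — with $A\coloneqq W'$ and the collections $Z_v$. Here $z\coloneqq\sum_{v\in W'}|Z_v|\ge|W'|(1/2+\eps/4)n\ge\eps n^2/4>(\eps/8)n^2$, so the hypothesis is satisfied, and the conclusion of \cref{prop: edges} gives $\sum_{v\in W'}|Z_v\cap E(G)|\ge(1-\eps/8)zd/n\ge(1-\eps/8)(1/2+\eps/4)|W'|d\ge(1/2+\eps/8)|W'|d$, with the displayed inequality failing with probability at most $e^{-(\eps/80)^4dn}$.

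Granting the estimate, the conclusion follows for \emph{every} $H\in\mathcal H_{1/2}(G)$ at once. Indeed $\Delta(H)\le d/2$, so $\sum_{v\in W'}|Z_v\cap E(H)|\le\sum_{v\in W'}d_H(v)\le|W'|d/2$, and since $E(G')=E(G)\setminus E(H)$,
\[\sum_{v\in W'}\bigl|Z_v\cap E(G')\bigr|\ \ge\ \Bigl(\tfrac12+\tfrac\eps8\Bigr)|W'|d-\tfrac12|W'|d\ =\ \tfrac\eps8|W'|d\ >\ 0 .\]
Hence there exist $v\in W'$ and $u\in U_v\setminus S$ with $e\coloneqq uv\in E(G')$ and $e\notin E(R+F)$. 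Because $v\in W$ and $u\in U_v$, \cref{def: help} provides $e'\in E(F)$ with $\{uv,e'\}$ a booster for $R$; by \ref{item: lastlemma3} the edge $e'$ is vertex-disjoint from $S$, and $u,v\notin S$ by construction, so $V(\{e,e'\})\cap S=\varnothing$, exactly as the lemma demands. Since this used only that the estimate's event holds (not the particular $H$), it holds for all $H$ simultaneously once that event occurs.

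Finally I would remove the conditioning by a union bound. By \ref{item: lastlemma1} every admissible pair has $e(R+F)\le 2\delta dn$, and each graph with at most $2\delta dn$ edges underlies at most $2^{2\delta dn}$ such (edge-disjoint, ordered) pairs; \cref{prop: small subgraphs} applied with $2\delta$ in place of $\delta$ then yields $\sum_{(R,F)}\mathbb P[R+F\subseteq G]\le 2^{2\delta dn}e^{4\delta dn\log(1/(2\delta))}\le e^{6\delta dn\log(1/\delta)}$. There are at most $e^{2\delta n\log(1/\delta)}$ choices of $S$, and for a fixed triple the lemma can fail only if $R+F\subseteq G$ and the estimate's event fails. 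Hence the overall failure probability is at most
\[e^{2\delta n\log(1/\delta)}\cdot e^{6\delta dn\log(1/\delta)}\cdot e^{-(\eps/80)^4dn}\ \le\ e^{-(\eps/80)^4dn+O(\delta dn\log(1/\delta))}\ =\ o(1),\]
since $\delta\ll\eps$. This completes the plan.

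\textbf{The main obstacle} is that \cref{prop: edges} only concentrates the number of $G$-edges inside a target family whose total size exceeds a fixed multiple of $n^2$; this rules out arguing about one special vertex at a time (each $U_v$ has size only $\approx n/2$) and forces one to bundle the $\approx(n/2)$-sized stars over all $\ge\eps n$ special vertices before applying the concentration inequality. The accompanying technical burden is to keep the union bound over the exponentially many pairs $(R,F)$ below the concentration gain $e^{-(\eps/80)^4dn}$, and this is precisely where the sparsity $\Delta(R),\Delta(F)\le 2\delta d$ (feeding into \cref{prop: small subgraphs}) is essential.
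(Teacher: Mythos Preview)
Your proposal is correct and follows essentially the same approach as the paper: fix a triple $(R,F,S)$, use \ref{item: lastlemma3} to extract a set of special vertices each equipped with a large target set of potential primary edges, apply \cref{prop: edges} (conditioned on $R+F\subseteq G$) to the bundle of stars to get $\sum_v|Z_v\cap E(G)|\ge(1/2+\Theta(\eps))|W'|d$, subtract the at most $|W'|d/2$ edges any $H\in\mathcal H_{1/2}(G)$ can kill, and finish with a union bound over triples via \cref{prop: small subgraphs}. The only cosmetic difference is that the paper passes through an averaging step to locate a single vertex $x$ with $e_{G\setminus(R+F)}(x,V_x)>d/2$, whereas you work directly with the aggregate sum; also, you explicitly track the $2^{2\delta dn}$ factor for the number of ordered decompositions $K=R+F$, which the paper absorbs implicitly.
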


\begin{proof}
Let $1/D \ll \delta \ll \eps< 1/10^5$.
Let $\mathcal{P}$ be the set of all triples $(R,F,S)$ where $R$ and $F$ are edge-disjoint graphs on $[n]$ which satisfy \ref{item: lastlemma1} and \ref{item: lastlemma3} and $S\subseteq[n]$ with $|S| \le \delta n$.

Fix a triple $(R, F, S)\in \mathcal{P}$. 
For every $x \in [n]$, let $V_x$ be the set of vertices $v \in [n] \setminus (S \cup \{x\})$ for which there exists some edge $e \in E(F)$ such that none of the endvertices of $e$ lies in $S$ and $\{xv, e\}$ is a booster for $R$.
Let $X' \coloneqq \{ x \in [n]: |V_x| \ge (1/2 + 3\eps/4)n\}$.
By assumption on the triple $(R,F,S)$ and using \cref{def: help}, we must have that $|X'| \ge \eps n$.
\COMMENT{For each $x\in[n]$, let $V'_x$ be the set of vertices $v \in [n] \setminus \{x\}$ for which there exists some edge $e \in E(F\setminus S)$ such that $\{xv, e\}$ is a booster for $R$.
We have by \ref{item: lastlemma3} and \cref{def: help} that $|V'_x| \ge (1/2 + \eps)n$ for at least $\eps n$ values of $x$.
But for each of these values of $x$ we have $|V_x|\geq|V_x'|-|S|\geq(1/2+3\eps/4)n$.}

Let $X\coloneqq X'\setminus S$.
Let $f(R,F,S)$ be the event that $\sum_{x \in X}e_{G \setminus (R + F)}(x, V_x) \ge (1 + \eps) d|X|/2$.
It follows by \cref{prop: edges} that\COMMENT{For each $x\in X$, let $Z_x$ be a star with centre $x$ and edges to all those vertices $v\in V_x$ for which $xv\notin E(R+F)$.
We have that $z \ge |X|\min\{|V_x|-d_{R+F}(x)\} \ge (\eps n - \delta n)(n/2) > \eps n^2/3$.
Apply \cref{prop: edges} with $\eps/3$ playing the role of $\eps$.
We get with probability at least $1 - e^{-(\eps/30)^4dn}$ (conditioned on $R+F\subseteq G$) that $\sum_{x \in X}e_{G\setminus(R+F)}(x, V_x) \ge (1-\eps/3)(d/n)|X|\min_{x\in X}\{|V_x|-d_{R+F}(x)\} > (1 + \eps) d|X|/2$.
}
\begin{equation}\label{comp}
\mathbb{P}[\overline{f(R,F,S)}\mid R+F \subseteq G] \le e^{-(\eps/30)^4dn}.
\end{equation}
It follows that the probability that $(R+F\subseteq G)\wedge\overline{f(R,F,S)}$ for some triple $(R,F,S) \in \mathcal{P}$ is at most \COMMENT{I've combined $R$ and $F$ below into a single graph since they live on the same vertex set and are edge disjoint.}
\begin{align*}
    \sum_{(R,F,S) \in \mathcal{P}} \mathbb{P}[\overline{f(R,F,S)}\mid R+F \subseteq G]\,\mathbb{P}[R+F \subseteq G] &\stackrel{\mathclap{\eqref{comp}}}{\le} 2^n e^{-(\eps/30)^4dn} \sum_{\substack{K \subseteq K_n\\ e(K) \leq 2\delta d n}} \mathbb{P}[K \subseteq G]\\
    & \le 2^n e^{-(\eps/30)^4dn} e^{4\delta dn \log 1/(2\delta)} \leq e^{-(\eps/50)^4dn},
\end{align*}
where the second inequality follows by \cref{prop: small subgraphs}.

We conclude that a.a.s.~for all $(R,F,S) \in \mathcal{P}$ with $R+F \subseteq G$ we have
\[\sum_{x \in X}\left(e_{G \setminus (R + F)}(x,V_x) - \frac12d_{G}(x)\right) \ge \left(1 + \eps\right)\frac{d|X|}{2} - \frac{d|X|}{2} > 0.\]
Hence, there must exist some $x \in X$ with $e_{G\setminus(R+F)}(x, V_x) > d/2$. 
Therefore, for any $H\in \mathcal{H}_{1/2}(G)$, there is some vertex $x\in X$ such that $e_{G'}(x, V_x) \ge e_G(x,V_x) - d_{H}(x) > 0$. 
That is, there must be some $v \in N_{G'}(x) \cap V_x$.
By the definition of $V_x$, there is some $e \in E(F)$ such that $\{xv,e\}$ is a booster for $R$.
Furthermore, by construction, we have $V(\{xv, e\}) \cap S = \varnothing$, and this completes the proof of the lemma.
\end{proof}

Armed with the previous lemmas, we are now in a position to complete the proof of \cref{thm: main}.

\begin{proof}[Proof of \cref{thm: main}]
Let $1/D \ll \delta \ll \eps < 1/10^5$ be such that \cref{cor: boost} holds for $\eps$, \cref{lem: rand} holds for $\eps/16$ and \cref{thm: expmain} holds for $\eps$.
Condition on each of these holding.

Let $H \in \mathcal{H}_{1/2 -\eps}(G)$ and let $G'\coloneqq G\setminus H$.
By \cref{thm: expmain}, there exists a subgraph $R \subseteq G'$ which is a spanning $3$-expander with $\Delta(R) \le \delta d$.

Let $R_0 \coloneqq R$.
We now proceed recursively as follows: for each $i\in[n]$, choose $e_{i,1}, e_{i,2} \in E(G')$ such that $\{e_{i,1}, e_{i,2}\}$ is a booster for $R_{i-1}$, and let $R_i \coloneqq R_{i-1} + e_{i,1} + e_{i,2}$.
In order to show that there exist such $e_{i,1}, e_{i,2}$ for all $i\in[n]$, consider the following.
Assume that $R_{i-1}$ satisfies $\Delta(R_{i-1}) \leq 2\delta d$.
Let $S_i \subseteq [n]$ be the set of vertices $v \in [n]$ with $d_{R_{i-1}}(v) \ge 2\delta d-1$.
For all $i \in [n]$ we have $|S_i| \leq 2e(R_{i-1}\setminus R_0)/(\delta d-1)\le 4n/(\delta d-1) < \delta n$. 
By applying \cref{cor: boost} with $S_i$, $R_{i-1}$ playing the roles of $S$ and $R$, respectively, there exists some subgraph $F_i \subseteq G' \setminus R_{i-1}$ such that $R_{i-1}$ has $(\eps/16)$-many boosters with help from $F_i$, where each secondary edge is vertex-disjoint from $S_i$.
Furthermore, we have $\Delta(F_i) \le 2\delta d$.
Therefore, by applying \cref{lem: rand}, there are some $e_{i,1}, e_{i,2} \in E(G')$ such that $\{e_{i,1}, e_{i,2}\}$ is a booster for $R_{i-1}$ and where $V(\{e_{i,1}, e_{i,2}\})\cap S_i = \varnothing$.
It follows that $\Delta(R_{i}) \leq 2\delta d$.

By the end of this process, we have added $n$ boosters to $R$ to obtain $R_n \subseteq G'$. 
Therefore $R_n$, and hence $G'$, is Hamiltonian.
\end{proof}


\section{Graphs of small degree with low resilience}\label{section: counter}

In this section we prove \cref{thm: counter}.
For this, we will require a crude bound on the number of edges spanned by any set of $n/2$ vertices in $G_{n,d}$.
To achieve this, we shall make use of the following result, which follows from a theorem of \citet{MacKay87} (see e.g.~\cite{SurvWormald}). 
We denote by $\alpha(G)$ the size of a maximum independent set in $G$.\COMMENT{
We have the following theorem due to McKay: A.a.s.~we have that $\alpha(G_{n,3})\leq 0.46n$.
We note that for any fixed $d$, the following two models are contiguous namely, $G_{n,d}$ and $G_{n,(d-1)} \oplus G_{n,1}$,~see e.g. the survey~\cite{SurvWormald}. 
This fact implies that for any $d\geq 4$, a.a.s.~$\alpha(G_{n,d})\leq 0.46n$ and therefore the lemma follows from the result of McKay. }

\begin{theorem}\label{thm: McKaydind}
For every fixed $d \ge 3$, a.a.s.~we have that $\alpha(G_{n,d})\leq 0.46n$. 
\end{theorem}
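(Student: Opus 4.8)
The plan is to deduce this directly from McKay's enumeration result for random cubic graphs together with the standard contiguity theory for random regular graphs. First I would recall the base case $d=3$: a theorem of \citet{MacKay87}, obtained via precise counting of independent sets in $\mathcal{G}_{n,3}$ (alternatively via first-moment / large-deviation estimates on $\alpha(G_{n,3})$ as presented in surveys such as \cite{SurvWormald}), gives that a.a.s.\ $\alpha(G_{n,3}) \le 0.46n$. So it remains to bootstrap from $d=3$ to arbitrary fixed $d \ge 4$.

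For the inductive step the key tool is contiguity: for every fixed $d \ge 4$, the models $G_{n,d}$ and $G_{n,d-1}\oplus G_{n,1}$ (the edge-disjoint union of an independent copy of $G_{n,d-1}$ and a random perfect matching $G_{n,1}$ on $[n]$, conditioned on the union being simple) are contiguous — see the discussion of contiguity of random regular graph models in \cite{SurvWormald}. Since $\alpha$ is monotone under adding edges, $\alpha(G_{n,d-1}\oplus G_{n,1}) \le \alpha(G_{n,d-1})$ always, and iterating this observation from $d$ down to $3$ shows that for every fixed $d\ge 3$ there is a coupling under which a graph contiguous to $G_{n,d}$ has independence number at most $\alpha(G_{n,3})$. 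Concretely: if a.a.s.\ $\alpha(G_{n,d-1}) \le 0.46n$, then a.a.s.\ $\alpha(G_{n,d-1}\oplus G_{n,1}) \le 0.46n$ (adding edges cannot increase $\alpha$), and contiguity transfers this a.a.s.\ statement to $G_{n,d}$. By induction on $d$, a.a.s.\ $\alpha(G_{n,d}) \le 0.46n$ for every fixed $d \ge 3$, which is the assertion of the theorem.

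The main obstacle — really the only non-routine point — is to be sure the contiguity input is being applied correctly: contiguity of $G_{n,d}$ with $G_{n,d-1}\oplus G_{n,1}$ holds for \emph{fixed} $d$ (this is exactly why \cref{thm: McKaydind} is only stated for fixed $d\ge 3$), and one must be slightly careful that ``a.a.s.'' is preserved under passing between contiguous sequences of probability spaces, which is precisely the defining property of contiguity. Everything else (monotonicity of $\alpha$ under edge addition, the finite induction on $d$) is immediate, so the proof reduces to quoting McKay's bound for $d=3$ and the contiguity fact, exactly as indicated in the footnote accompanying the statement.
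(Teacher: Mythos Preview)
Your proposal is correct and follows essentially the same argument as the paper: the paper also quotes McKay's bound $\alpha(G_{n,3})\le 0.46n$ as the base case and then invokes the contiguity of $G_{n,d}$ with $G_{n,d-1}\oplus G_{n,1}$ (from \cite{SurvWormald}) together with the monotonicity of $\alpha$ under edge addition to extend the bound to all fixed $d\ge 4$.
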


\begin{lemma}\label{prop: edgest}
For every fixed $d\geq3$, a.a.s.~we have that $e_{G_{n,d}}(A) > n/100$ for all $A \subseteq [n]$ with $|A| = \lfloor n/2 \rfloor$.
\end{lemma}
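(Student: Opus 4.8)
The plan is to show that no set $A$ of size $\lfloor n/2\rfloor$ can induce fewer than $n/100$ edges, by combining the independence number bound of \cref{thm: McKaydind} with a simple extremal observation: a graph on $n/2$ vertices with very few edges must contain a large independent set, which would contradict that bound once we enlarge it into an independent set of the whole graph.

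First I would recall that \cref{thm: McKaydind} gives a.a.s.\ $\alpha(G_{n,d})\le 0.46n$; condition on this event, and also on $G=G_{n,d}$ being such that its maximum degree is $d$ (which is automatic). Now suppose for contradiction that there is some $A\subseteq[n]$ with $|A|=\lfloor n/2\rfloor$ and $e_G(A)\le n/100$. Consider the induced subgraph $G[A]$, which has $|A|\ge n/2-1$ vertices and at most $n/100$ edges. A graph with $m$ vertices and $k$ edges has an independent set of size at least $m-k$ (greedily delete one endpoint of each edge), so $G[A]$ contains an independent set $I$ with $|I|\ge |A|-e_G(A)\ge n/2-1-n/100 \ge 0.49n-1 > 0.46n$ for large $n$. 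Since $I$ is independent in $G$ as well, this gives $\alpha(G)\ge 0.49n-1>0.46n$, contradicting \cref{thm: McKaydind}.

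Hence a.a.s.\ every $A\subseteq[n]$ with $|A|=\lfloor n/2\rfloor$ satisfies $e_G(A)>n/100$, which is exactly the statement. The only quantity to be slightly careful about is the arithmetic: $n/2-1-n/100 = 0.49n-1$, which exceeds $0.46n$ for all sufficiently large $n$, so the claimed $n/100$ threshold is comfortably within reach; in fact the argument would survive replacing $n/100$ by any $c\,n$ with $c<0.04$.

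I do not expect a genuine obstacle here: the whole content is packaged inside \cref{thm: McKaydind}, and the remaining step is the elementary bound $\alpha(H)\ge |V(H)|-e(H)$ applied to $H=G[A]$. The only thing worth double-checking is that the contiguity-type reduction underlying \cref{thm: McKaydind} is valid for all fixed $d\ge 3$ (for $d=3$ it is McKay's theorem directly, and for $d\ge 4$ one uses contiguity of $G_{n,d}$ with $G_{n,d-1}\oplus G_{n,1}$ as indicated before the theorem statement), but since we are allowed to assume \cref{thm: McKaydind}, nothing further is needed.
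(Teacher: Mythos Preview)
Your proposal is correct and takes essentially the same approach as the paper: condition on the independence-number bound from \cref{thm: McKaydind}, and observe that a set $A$ of size $\lfloor n/2\rfloor$ spanning at most $n/100$ edges would contain an independent set of size at least $|A|-e_G(A)>0.46n$, a contradiction. The paper's proof is the same one-line argument (it records the slightly weaker estimate $n/2-n/50>0.46n$, but the idea is identical).
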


\begin{proof}
By \cref{thm: McKaydind}, every set of size $n/2$ must span at least $n/100$ edges, as otherwise it would contain an independent set of size $n/2-n/50>0.46n$.  
\end{proof}

Alternatively, this lemma can be proved directly using a switching argument.

In order to prove \cref{thm: counter} we will use a switching argument.
Given a graph $G\in\mathcal{G}_{n,d}$ and any integer $\ell\in[d]$, let $u\in[n]$ and let $\Lambda_{u,\ell}^+=(e_1,\ldots,e_\ell,f_1,\ldots,f_\ell)$ be an ordered set of $2\ell$ edges from $E(G)$ such that $e_i=uv_i$ with $v_i\neq v_j$ for all $i\neq j$, and $\{f_i:i\in[\ell]\}$ is a set of independent edges such that, for each $i\in[\ell]$, the distance between $f_i$ and $e_i$ is at least 2\COMMENT{Note that this implies that each $f_i$ is vertex-disjoint from the star consisting of $e_1,\ldots,e_\ell$.}.
We call $\Lambda_{u,\ell}^+$ a \emph{$(u,\ell)$-switching configuration}.
For each $i\in[\ell]$, choose an orientation of $f_i$ and write $f_i=x_iy_i$, where $f_i$ is oriented from $x_i$ to $y_i$.
Let $\lambda_{u,\ell}^+\coloneqq\{e_1,\ldots,e_\ell,f_1,\ldots,f_\ell\}$, $\Lambda_{u,\ell}^-\coloneqq(uy_1,\ldots,uy_\ell,x_1v_1,\ldots,x_\ell v_\ell)$ and $\lambda_{u,\ell}^-\coloneqq\{uy_1,\ldots,uy_\ell,x_1v_1,\ldots,x_\ell v_\ell\}$.
We say that the graph $G'\coloneqq([n],(E(G)\setminus\lambda_{u,\ell}^+)\cup\lambda_{u,\ell}^-)\in\mathcal{G}_{n,d}$ is obtained from $G$ by a \emph{$u$-switching of type $\ell$}.
Observe that, given such a setting, we also have that $G$ is obtained from $G'$ by a $u$-switching of type $\ell$, interchanging the roles of $\Lambda_{u,\ell}^+$ and $\Lambda_{u,\ell}^-$.

\begin{proof}[Proof of \cref{thm: counter}]
Fix any odd $d>2$.
Let $\hat{\mathcal{G}}_{n,d} \subseteq \mathcal{G}_{n,d}$ be the collection of graphs for which the statement of \cref{prop: edgest} holds.
We have by \cref{prop: edgest} that $|\hat{\mathcal{G}}_{n,d}| = (1-o(1))|\mathcal{G}_{n,d}|$.
Let $\mathcal{G}_{n,d}'\subseteq\hat{\mathcal{G}}_{n,d}$ be the collection of all graphs $G\in\hat{\mathcal{G}}_{n,d}$ which are not $(d-1)/2$-resilient with respect to Hamiltonicity.
Let $p\coloneqq|{\mathcal{G}}_{n,d}'|/|\hat{\mathcal{G}}_{n,d}|$.
We will prove that $p$ is bounded from below by a positive constant which does not depend on $n$. 

For each $G \in \hat{\mathcal{G}}_{n,d}$, consider a maximum cut $M$ with parts $A_M$ and $B_M$, where $|A_M|\le |B_M|$ (thus $M=E_G(A_M,B_M)$).
By abusing notation, we also use $M$ to denote the bipartite graph $G[A_M,B_M]$.
Note that for all $x \in A_M$ we have $d_{M}(x) > d/2$, as otherwise we could move $x$ from $A_G$ to $B_G$ to obtain a larger cut; similarly, $d_{M}(y) > d/2$ for all $y\in B_M$.

Given $G\in\hat{\mathcal{G}}_{n,d}$, suppose there exists a maximum cut $M$ for $G$ such that $|A_M|<|B_M|$.
Let $H\coloneqq([n],E_G(A_M)\cup E_G(B_M))$.
It is then clear that $M=G\setminus H$ is not Hamiltonian, as it is an unbalanced bipartite graph.
Furthermore, we have that $\Delta(H)\leq(d-1)/2$\COMMENT{The degree of each vertex $v\in A_M$ in $H$ is the number of edges this vertex had into $A_M$ in $G$. By the above observation, this is $d-d_M(v)\leq d-(d+1)/2=d/2-1/2$. Something similar works for $v\in B_M$.}, so we conclude that $G$ is not $(d-1)/2$-resilient with respect to Hamiltonicity and, thus, $G\in\mathcal{G}_{n,d}'$.
(Below we will use that the same conclusion holds if there is \emph{any} cut $M$ of $G$ such that $|A_M|<|B_M|$, $d_{M}(x) > d/2$ for all $x \in A_M$, and $d_{M}(y) > d/2$ for all $y\in B_M$.)
Therefore, for every $G \in \hat{\mathcal{G}}_{n,d}\setminus \mathcal{G}_{n,d}'$ we have that $|A_M| = |B_M|$ for every maximum cut $M$ of $G$.

For each $G \in \hat{\mathcal{G}}_{n,d}\setminus \mathcal{G}_{n,d}'$, fix a maximum cut $M_G$ of $G$ which partitions $[n]$ into $A_G$ and $B_G$.
Then, for each $x\in A_G$ there exists $k \in [\lceil d/2 \rceil]$ such that $d_{M_G}(x) = \lfloor d/2 \rfloor + k$.
Let $\ell \in [\lceil d/2 \rceil]$ be such that there exist at least $(1-p)|\hat{\mathcal{G}}_{n,d}|/d$\COMMENT{By pigeonhole, there is a set of at least $2(1-p)|\hat{\mathcal{G}_{n,d}}|/(d+1)>(1-p)|\hat{\mathcal{G}}_{n,d}|/d$ graphs with this property.} graphs $G \in \hat{\mathcal{G}}_{n,d} \setminus \mathcal{G}_{n,d}'$ with the property that there are at least $n/(2d)$\COMMENT{By pigeonhole, in each maximum cut there are at least $\frac n2\frac{1}{\lceil d/2\rceil}\ge n/(2d)$ vertices in $A_G$ with the same degree in $M$.} vertices $x \in A_G$ with $d_{M_G}(x) = \lfloor d/2\rfloor+\ell$.
Let $D \coloneqq \lfloor d/2\rfloor+\ell$.
Denote the collection of all such graphs $G$ by $\Omega$.

For each $G\in\Omega$ and for each $x\in A_G$ such that $d_{M_G}(x)=D$, we consider all possible $x$-switchings of type $D$ where the $(x,D)$-switching configuration $\Lambda_{x,D}^+=(e_1,\ldots,e_D,f_1,\ldots,f_D)$ satisfies that $\{e_1,\ldots,e_D\}=E_{M_G}(x,B_G)$ and $\{f_1,\ldots,f_D\}\subseteq E_{G}(A_G)$.
We say that any $G'\in\mathcal{G}_{n,d}$ which can be obtained from $G$ by such an $x$-switching of type $D$, is obtained by an \emph{out-switching} from $G$, and we call $\Lambda_{x,D}^+$ an \emph{out-switching configuration}.
Let $\Omega'$ denote the set of all graphs $G'\in\mathcal{G}_{n,d}$ which can be obtained by out-switchings from some graph $G\in\Omega$.
In particular, note that for each $G'$ obtained from $G$ by an out-switching we may define $A'\coloneqq A_G\setminus\{x\}$ and $B'\coloneqq[n]\setminus A'$, so that $|A'|<|B'|$, $e_{G'}(u,B')>d/2$ for all $u\in A'$, and $e_{G'}(v,A')>d/2$ for all $v\in B'$, which means, as observed previously, that $G'$ is not $(d-1)/2$-resilient with respect to Hamiltonicity.
Therefore, $\Omega'\subseteq(\mathcal{G}_{n,d}\setminus\hat{\mathcal{G}}_{n,d})\cup\mathcal{G}_{n,d}'$ and $\Omega\cap\Omega'=\varnothing$.

To show that $\Omega'$ is large, we consider an auxiliary bipartite graph $\Gamma$ with parts $\Omega$ and $\Omega'$.
We place an edge between $G \in \Omega$ and $G' \in \Omega'$ if $G'$ is obtained from $G$ by an out-switching.
First, let $G \in \Omega$.
We will now provide a lower bound on the number of out-switchings from $G$.
Since $G \in \Omega$, by construction there are at least $n/(2d)$ vertices $x\in A_G$ such that $d_{M_G}(x)=D$.
For each such $x$, the number of out-switching configurations is given by the different choices for the edges in $(e_1,\ldots,e_D,f_1,\ldots,f_D)$, chosen sequentially.
There are $D!$ choices for $(e_1,\ldots,e_D)$.
For all $i\in[D]$, as each of the $f_i$ has to be independent from the previously chosen edges, at distance at least $2$ from $e_i$, and spanned by $A_G$, by \cref{prop: edgest} we conclude that the number of choices for $f_i$ is at least $n/100-4d^2$\COMMENT{By \cref{prop: edgest} there are at least $n/100$ edges inside of $A$. 
Each $f_i$ must be at distance at least $2$ from $e_i$. 
From $x$, this means that there are at most $d^2$ forbidden edges, while from the other endpoint of $e_i$ there are at most $d^2$ forbidden edges as well.
Furthermore, the $f_i$ must be independent.
For $f_1$ there are at least $n/100-2d^2$ choices. 
Each $f_i$ ``blocks'' at most $2(d-1)$ edges, so for each new $f_i$ that we add we ``lose'' at most $2d$ choices for the next one.
Therefore, for $f_i$ there are at least $n/100-2d^2-(i-1)2d\geq n/100-2d^2-2id$ choices.
As $i\leq D\leq d$, for all $f_i$ there are at least $n/100-4d^2$ choices.}.
Finally, once the out-switching configuration is given, there are $2^D$ possible switchings, one for each possible orientation of the set of edges $\{f_i:i\in[D]\}$; on the other hand, $D!$ different out-switching configurations result in the same outcome $G'$\COMMENT{Indeed, for any $\sigma\in\mathfrak{S}_\ell$ we have that $\Lambda_{x,D}^+=(e_1,\ldots,e_\ell,f_1,\ldots,f_\ell)$ and $\Lambda_{x,D}^{+,\sigma}=(e_{\sigma(1)},\ldots,e_{\sigma(\ell)},f_{\sigma(1)},\ldots,f_{\sigma(\ell)})$ give the same switchings.}.
We conclude that
\begin{equation}\label{eq: counterswitch2}
    d_\Gamma(G) \ge \frac{n}{2d}2^D\left(\frac{n}{100}-4d^2\right)^D.
\end{equation}

Now consider any $G' \in \Omega'$.
It is easy to see that\COMMENT{We will now provide an upper bound for the number of $x$-switchings of type $D$ from $G'$, which in turn provides an upper bound for the degree of $G'$ in $\Gamma$.
We first choose a vertex $x \in [n]$, for which there are at most $n$ choices.
We must now choose an $(x,D)$-switching configuration $\Lambda_{x,D}^+=(e_1,\ldots,e_D,f_1,\ldots,f_D)$ in $G'$.
For the $D$ edges $e_i$ incident to $x$ there are $d!/(d-D)!$ choices.
For the $D$ independent edges $f_i$ there are at most $(nd/2)^D$ choices.
Finally, for each $(x,D)$-switching configuration there are $2^D$ possible switchings, according to the orientations of the edges $f_i$ with $i\in[D]$, and for each switching configuration there are $D!$ different switching configurations which result in the same set of possible switchings.}
\begin{equation}\label{eq: counterswitch3}
    d_\Gamma(G') \leq n2^D\binom{d}{D}\left(\frac{nd}{2}\right)^D.
\end{equation}

Therefore, by double-counting the edges in $\Gamma$, from \eqref{eq: counterswitch2} and \eqref{eq: counterswitch3} we have that 
\[|\Omega| \leq 2d\binom{d}{D}\left(\frac{101d}{2}\right)^D|\Omega'|.\] 
It follows that there exists a constant $p$ which does not depend on $n$ for which a $p$ fraction of the graphs in $\hat{\mathcal{G}}_{n,d}$ are not $(d-1)/2$-resilient with respect to Hamiltonicity\COMMENT{Let $C\coloneqq2d^2\binom{d}{D}\left(\frac{101d}{2}\right)^D$.
We have that
\begin{align*}
    \frac{1-p}{d}|\hat{\mathcal{G}}_{n,d}|\leq|\Omega|\leq\frac{C}{d}|\Omega'|\leq\frac{C}{d}\left(o(1)|\hat{\mathcal{G}}_{n,d}|+|\mathcal{G}_{n,d}'|\right)=\frac{C}{d}(p+o(1))|\hat{\mathcal{G}}_{n,d}|\implies p\geq\frac{1-o(1)C}{1+C}\geq\frac{1}{2(C+1)}
\end{align*}
for $n$ sufficiently large.
}.
The result follows.
\end{proof}


\bibliographystyle{afstyle}
\bibliography{regular_resilience_bib}

\appendix

\section{Proof of \texorpdfstring{\cref{prop: small subgraph}}{Proposition~\ref{prop: small subgraph}}}\label{appendix}

In this section we make use of the following notation.
Let $\mathbf{d} = (d_1, \dots, d_n)$ be a graphic degree sequence and let $D\coloneqq\sum_{i=1}^{n} d_i$.
Let $X$ be the expanded set of $[n]$ with respect to the degree sequence $\mathbf{d}$.
For convenience, for a point $x \in X$ we refer to the unique $v \in [n]$ such that $x$ is an element of the expanded set of $v$ as the \emph{contracted vertex} of $x$.
For any $i\in[D/2]\cup\{0\}$, we call the edge set of a matching on $X$ with exactly $i$ edges an \emph{$i$-configuration}.
For each $i\in[D/2]\cup\{0\}$, let $\Omega_i$ be the collection of all $i$-configurations on $X$.
For each $i\in[D/2]\cup\{0\}$ and $M_i \in \Omega_i$, let $\Sigma(M_i)$ denote the collection of all orderings $(x_1, x_2, \dots, x_D)$ of $X$ for which $\{x_1, \dots, x_{2i-1}, x_{2i}\} = V(M_i)$.

For $i\in[D/2]$, we now describe two algorithms for obtaining $i$-configurations from $(i-1)$-configurations.
First, given any $M_{i-1}\in\Omega_{i-1}$, consider the set of all pairings $e\in X^{(2)}$ that can be added to $M_{i-1}$ so that $M_{i-1} \cup e\in\Omega_i$, and choose one such pairing, $e_i$, uniformly at random.
Then, output $M_i\coloneqq M_{i-1} \cup e_i$.
We refer to this process as \emph{Algorithm~A}.

To describe the second algorithm, for any $i\in[D/2]$, given $M_{i-1} \in \Omega_{i-1}$ and $\sigma \in \Sigma(M_{i-1})$, let
\[
  M_i\coloneqq\left\{
  \begin{array}{@{}ll@{}}
    M_{i-1} \cup \{x_{2i-1}x_{2i}\}, & \text{ with probability }1/(2i-1), \\
    (M_{i-1} \cup \{x_{2i-1}z_1, x_{2i}z_2\}) \setminus \{z_1z_2\} , & \text{ with probability }(2i-2)/(2i-1),
  \end{array}\right.
\]
where the pairing $z_1z_2$ is chosen uniformly at random from $M_{i-1}$ and $z_1$ is chosen uniformly at random from $\{z_1, z_2\}$.
We refer to the above rule for obtaining $M_i \in \Omega_i$ as \emph{Algorithm~B}.
Whenever Algorithm~B returns $M_i \coloneqq M_{i-1} \cup x_{2i-1}x_{2i}$, we say the algorithm made a \emph{Type~A} choice.
Otherwise, we say the algorithm made a \emph{Type~B} choice.
Given an $(i-1)$-configuration $M$ and an $i$-configuration $M'$, we write $M \stackrel{\sigma}{\sim} M'$ if $M'$ is obtainable as an output of a single iteration of Algorithm~B with inputs $M$ and $\sigma$.

\begin{proposition}\label{prop: algoB}
Let $\mathbf{d} = (d_1, \dots, d_n)$ be a graphic degree sequence, and let $D\coloneqq\sum_{i=1}^{n} d_i$.
Let $i\in[D/2]$, $M_{i-1} \in \Omega_{i-1}$ and $\sigma \in \Sigma(M_{i-1})$.
Then, given input $M_{i-1}$ and $\sigma$, Algorithm~B outputs a configuration $M_i \in \Omega_i$ uniformly at random from the set $\{M \in \Omega_i: M_{i-1} \stackrel{\sigma}{\sim}M\}$.
\end{proposition}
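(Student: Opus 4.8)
The plan is to compute the output distribution of a single iteration of Algorithm~B explicitly and to observe that it is uniform on its support; since the support is by definition the set $\{M \in \Omega_i : M_{i-1} \stackrel{\sigma}{\sim} M\}$, this is exactly the assertion. First I would fix $M_{i-1} \in \Omega_{i-1}$ and an ordering $\sigma = (x_1, \dots, x_D) \in \Sigma(M_{i-1})$, and abbreviate $a \coloneqq x_{2i-1}$ and $b \coloneqq x_{2i}$. Since $\{x_1, \dots, x_{2i-2}\} = V(M_{i-1})$ by the definition of $\Sigma(M_{i-1})$, both $a$ and $b$ lie outside $V(M_{i-1})$, a fact that will be used repeatedly.

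Next I would enumerate the possible outputs. A Type~A choice produces the single configuration $M^A \coloneqq M_{i-1} \cup \{ab\}$. A Type~B choice, on input a pairing $z_1z_2 \in M_{i-1}$ together with a labelling designating one of its endpoints as $z_1$, produces the configuration $(M_{i-1} \setminus \{z_1z_2\}) \cup \{az_1, bz_2\}$; as $z_1z_2$ ranges over the $i-1$ pairings of $M_{i-1}$ and the labelling over its two choices, this gives $2(i-1)$ configurations, so the support has at most $1 + 2(i-1) = 2i-1$ elements. I would then check these $2i-1$ configurations are pairwise distinct: in $M^A$ the partner of $a$ is $b$, whereas in every Type~B output the partner of $a$ lies in $V(M_{i-1})$, so $M^A$ differs from all of them; and for a Type~B output one recovers the partner $z_1$ of $a$ and the partner $z_2$ of $b$, hence the removed pairing $z_1z_2$ and the labelling $(z_1,z_2)$, so the map (pairing, labelling) $\mapsto$ output is injective. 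Thus the support has exactly $2i-1$ elements.

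Finally I would compute the probabilities. The configuration $M^A$ is output precisely on a Type~A choice, which has probability $1/(2i-1)$. A fixed Type~B output, corresponding to a pairing $z_1z_2$ and a labelling $(z_1,z_2)$, is produced with probability $\tfrac{2i-2}{2i-1}\cdot\tfrac{1}{i-1}\cdot\tfrac12 = \tfrac{1}{2i-1}$, using that the pairing is chosen uniformly among the $i-1$ pairings of $M_{i-1}$ and the labelling uniformly among two. Hence every element of the support is output with probability exactly $1/(2i-1)$, which is the claimed uniformity (and incidentally confirms the support has size $2i-1$, with probabilities summing to $1$). When $i=1$ the Type~B branch has probability $0$, the support is the singleton $\{\{x_1x_2\}\}$, and the statement is trivial.

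I do not expect a genuine obstacle here: the only step that requires any care is the distinctness of the $2i-1$ candidate outputs (i.e.\ the injectivity of the recovery map), and the identity $\tfrac{2i-2}{2(i-1)}=1$ that makes the Type~B outputs come out with the same weight as $M^A$ — which is presumably the whole design rationale of Algorithm~B. Everything else is a one-line arithmetic verification.
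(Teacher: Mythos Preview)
Your proof is correct and takes essentially the same approach as the paper: both enumerate the $2i-1$ possible outputs (one Type~A, $2(i-1)$ Type~B indexed by an ordered pair from a pairing of $M_{i-1}$) and verify each occurs with probability $1/(2i-1)$. You are in fact slightly more careful than the paper's sketch, which does not spell out the injectivity argument showing the $2i-1$ outputs are pairwise distinct.
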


\COMMENT{
\begin{proof}
Let $\Omega\coloneqq \{M \in \Omega_i: M_{i-1} \stackrel{\sigma}{\sim}M\}$.
Consider the complete bipartite graph $\Gamma$ with parts $\{M_{i-1}\}$ and $\Omega$. 
Note that each $M \in \Omega$ is of the form $M = M_{i-1} + x_{2i-1}x_{2i}$, or $M = M_{i-1} - ab + ax_{2i-1} + bx_{2i}$, for some $ab \in E(M_{i-1})$.
Note that $d_\Gamma(M_{i-1}) = 2i-1$, since there are $2i-2$ choices for the ordered set $(a,b)$.
Given $M_{i-1}$ we note that Algorithm~B selects an edge incident to $M_{i-1}$ in $\Gamma$ uniformly at random, and hence $M_i$ is selected uniformly at random from the set $\Omega$.
\end{proof}}

It follows that, by initialising Algorithm~B with the empty configuration on $X$ and any (fixed) ordering of the points in $X$ as an input, we can obtain uniformly at random a configuration $M \in \Omega_D$, by calling on the algorithm $D$ times.\COMMENT{
Fix an ordering $\sigma = (x_1, x_2, \dots, x_D)$ of $X$.
For each $i<D/2$, let $\Omega^\sigma_i\coloneqq \{M \in \Omega_i : \{x_1, \dots, x_{2i}\} = V(M)\}$.
Now consider the bipartite graph $\Gamma$ with parts $\Omega_i^\sigma$ and $\Omega_{i+1}^\sigma$, where $M \in \Omega^{\sigma}_i$ is connected to $M' \in \Omega_{i+1}^\sigma$ if $M \stackrel{\sigma}{\sim} M'$. 
Note that each $M' \in \Omega_{i+1}^\sigma$ is of the form $M' = M + x_{2i+1}x_{2i+2}$, or $M' = M - ab + ax_{2i+1} + bx_{2i+2}$, for some $M \in \Omega_i^\sigma$ and $ab \in E(M)$.
Note that $M \in \Omega^\sigma_{i}$ is such that $d_\Gamma(M) = 2i+1$, since there are $2i$ choices for the ordered set $(a,b)$.
Furthermore, note that for each $M' \in \Omega^\sigma_{i+1}$ we have $d_\Gamma(M') = 1$.
This means that the sets of $(i+1)$-configurations that can be reached from each $M_i$ are mutually disjoint.\\
Now consider an iteration of Algorithm~B with inputs $M_i\in\Omega_i^\sigma$ chosen uniformly at random and $\sigma$. 
By \cref{prop: algoB}, given any $M_i$, the output is selected uniformly at random among the neighbours of $M_i$ in $\Gamma$.
Since $M_i$ was chosen uniformly at random, it follows that we obtain an element $M'$ which is chosen uniformly at random from $\Omega^\sigma_{i+1}$.\\
In order to complete the proof, apply this recursively.
Start with the empty configuration as input, and apply Algorithm~B iteratively $D/2$ times, feeding its output as input for the next iteration.
In each iteration (labelled by $i<D/2$), we have that the input is chosen uniformly at random in $\Omega_i^\sigma$, which by the above observation means that the output is a uniformly at random member of $\Omega_{i+1}^\sigma$.
Finally, note that, for all $\sigma$, $\Omega_{D/2}^\sigma=\mathcal{C}_{n,\mathbf{d}}^*$.
}
Furthermore, for $i\in[D/2]$ we can initialise Algorithm~A with the empty configuration on $X$ and run it for $i$ iterations, to obtain an element $M_i$ uniformly at random from the set $\Omega_{i}$. 
We can then call on Algorithm~B with input $M_i$ and arbitrary $\sigma \in \Sigma(M_i)$, and run it for $D/2-i$ iterations to obtain a configuration $M_{D/2}$ uniformly at random from the set of configurations $\Omega_{D/2}=\mathcal{C}^*_{n,\mathbf{d}}$.
This ability to switch from Algorithm~A to Algorithm~B is crucial for the proof of \cref{prop: small subgraph}. \COMMENT{The idea is the following.
If we just called on the simpler Algorithm~A to analyse our configuration process, we run into difficulty when the process is near the final few edges. 
It becomes hard to estimate the probability that the final edge added say, won't be an edge already contained in the subgraph $R$ which we condition on being present.
Algorithm~B is easier to analyse in this sense.
However, we cannot simply use Algorithm~B either for the following reason. 
In the initial stages of the process whenever Algorithm~B defaults to `Choice B' it is difficult to argue that the probability of selecting an edge already contained in $R$ is small. 
However after many iterations it becomes easier to estimate this probability as being very small.
Thus, we begin our process with Algorithm~A and then switch to Algorithm~B to help with the analysis.}

\begin{proof}[Proof of \cref{prop: small subgraph}]
Let $X$ be the expanded set of $[n]$ with respect to $\mathbf{d}$, and let $D\coloneqq nd - \sum_{i=1}^{n} d_i$.
We will generate a configuration $M \sim \mathcal{C}_{n,\mathbf{d}}^*$ in $D/2$ steps.
To do so, we apply Algorithm~A iteratively $D/3$ times, starting with the empty configuration.
We then switch to Algorithm~B for the remaining $D/6$ iterations.

Let $M_0$ be the empty configuration on $X$.
We iteratively call on Algorithm~A to obtain, for each $i\in[D/3]$, an $i$-configuration $M_i$.
Note that $R + \varphi(M_i)$ is not simple if Algorithm~A inserts some pairing which becomes a loop or a multiple edge in $R + \varphi(M_i)$.
We may assume that, to obtain an $i$-configuration given an $(i-1)$-configuration, Algorithm~A selects a point $u$ not yet covered by a pairing, and then selects a second uncovered point $v$ to pair with $u$.
In any iteration, once $u$ is chosen, there are at most $d^2$ choices of $v$ that would result in a loop or a parallel edge.\COMMENT{Let $v_u\in[n]$ be the contracted vertex of $u$.
For the loops, there are at most $d-1$ points with the same contracted vertex as $u$ which would yield loops. 
For the multiple edges, consider the points with the same contracted vertex as $u$ (there are at most $d-d_{v_u}$ such points).
In the worst case all of these have already been paired with points from expanded sets of different vertices.
It follows that $u$ cannot be in a pairing with any point whose contracted vertex is one of these, as otherwise this would result in a multiple edge.
This gives a total of at most $(d-d_{v_u})(d-1)$ potential bad points with respect to multiple edges in $\varphi(M_i)$.
Furthermore, $v$ should not lie in the expanded set of any vertex in $N_R(v_u)$, as this would create an edge parallel to one of the edges of $R$.
There are $d_{v_u}$ vertices in $N_R(v_u)$.
Therefore, there are at most $d_{v_u}(d-1)$ potential bad points with respect to multiple edges in $R$.
Adding these up, we have at most $d(d-1)$ bad points with respect to parallel edges.
Adding this to the number of points that might result in loops, there are at most $d^2$.}
Furthermore, note that after the final iteration of Algorithm A we still have $D/3$ unpaired points of $X$. 
That is, at every iteration of Algorithm~A there are at least $D/3$ choices for the point $v$.
It follows that 
\[\mathbb{P}[R + \varphi(M_i) \text{ is not simple} \mid R + \varphi(M_{i-1})\text{ is simple}] \le \frac{d^2}{D/3}\]
for all $i\in[D/3]$.
Therefore, \COMMENT{
We have $\mathbb{P}[R + \varphi(M_{D/3}) \text{ is simple}]> (1- \frac{d^2}{D/3})^{D/3}$. 
We can use that $1-x \ge e^{-1.1x}$ on this range of $x$ (for $n$ sufficiently large and since $\delta \le 1/10$) to get $\mathbb{P}[R + \varphi(M_{D/3}) \text{ is simple}]> e^{-3d^2/2}$.}
\begin{equation}\label{eqn2}
\mathbb{P}[R + \varphi(M_{D/3}) \text{ is simple}] > e^{-3d^2/2}.
\end{equation}

Next, let $\sigma \in \Sigma({M_{D/3}})$.
We now call on Algorithm~B iteratively $D/6$ times.
For each $D/3<i\leq D/2$, we use input $M_{i-1}$ and $\sigma$ to generate an $i$-configuration $M_i$.
Suppose Algorithm~B makes a choice of Type~B on the $i$-th iteration. 
This happens with probability $(2i-2)/(2i-1)$.
We claim that the probability that, in any given iteration, when the algorithm makes a choice of Type~B, an edge is added which will result in a loop or multiple edge in $R + \varphi(M_i)$ is at most $6d^2/D$.
To see this, note first that, to select an edge that will result in a loop, the algorithm must choose $z_1z_2\in E(M_{i-1})$ such that either $z_1$ has the same contracted vertex as $x_{2i-1}$ or $z_2$ has the same contracted vertex as $x_{2i}$.
Thus, there are at most $2(d-1)$ choices for $z_1z_2$ which can result in a loop.
We now count the pairings which the algorithm could add and which would result in a parallel edge in $R + \varphi(M_i)$.
First, consider the case where $z_1z_2$ is such that there is already a pairing $u'v' \in M_{i-1}$ between some point $u'\in X$ with the same contracted vertex as $x_{2i-1}$ and some $v'\in X$ with the same contracted vertex as $z_1$, or there is an edge in $E(R)$ between the contracted vertices of $x_{2i-1}$ and $z_1$.
This gives at most $(d-1)^2$ choices which would result in a parallel edge.
Similarly, by  considering $x_{2i}$ and $z_2$ there are at most another $(d-1)^2$ choices which would result in a parallel edge.
Thus, there are at most $2d^2$ choices of $z_1z_2$ which can result in a parallel edge or loop in $R + \varphi(M_i)$.
Note also that the choice of $z_1z_2$ is made uniformly at random from a set of size at least $D/3$.
The claim follows and we have that
\[\mathbb{P}[R + \varphi(M_i) \text{ is simple} \mid R + \varphi(M_{i-1})\text{ is simple}] \ge \frac{2i-2}{2i-1}\Big(1 - \frac{6d^2}{D}\Big)\]
for all $D/3<i\leq D/2$.
We have that $\prod_{i = D/3 + 1}^{D/2} ((2i-2)/(2i-1)) \ge 1/2$ and $(1- 6d^2/D)^{D/6} \ge 2e^{-3d^2/2}$.\COMMENT{The first inequality follows from a telescopic series. 
We have that 
\[\prod_{i = D/3 + 1}^{D/2} \frac{2i-2}{2i-1} \ge \prod_{i = D/3 + 1}^{D/2} \frac{2i-3}{2i-1} = \frac{2D/3-1}{2D/3+1}\frac{2D/3+1}{2D/3+3} \dots \frac{D-3}{D-1}=\frac{2D/3-1}{D-1} > 1/2.\]
The second inequality follows using the inequality that $1-x > e^{-1.1x}$ (since $\delta \le 1/10$ and so $x \sim d/n$).} 
Therefore, 
\begin{equation}\label{eqn1}
\mathbb{P}[R + \varphi(M_{D/2}) \text{ is simple} \mid R + \varphi(M_{D/3})\text{ is simple}] \ge e^{-3d^2/2}.
\end{equation}
The result follows by combining \eqref{eqn2} and \eqref{eqn1}.
\end{proof}

\end{document}